\newcommand{\e}{{\epsilon}}
\newcommand{\CC}{\mathcal{C}}
\newcommand{\lan}{\langle}
\newcommand{\ra}{\rangle}
\newcommand{\F}{\mathcal{F}}
\newcommand{\Rep}{{\rm Rep}}
\newcommand{\la}{\lambda}
\newcommand{\g}{\mathfrak{g}}
\newcommand{\N}{\mathbb{N}}
\newcommand{\Q}{\mathbb Q}
\newcommand{\R}{\mathbb{R}}
\DeclareMathOperator{\FPdim}{FPdim}
\newcommand{\mL}{\mathcal{L}}
\newcommand{\mN}{\mathcal{N}}
\DeclareMathOperator{\U}{U}
  \DeclareMathOperator{\Diag}{Diag}
\DeclareMathOperator{\Hom}{Hom}
\DeclareMathOperator{\SL}{SL}\DeclareMathOperator{\SU}{SU}\DeclareMathOperator{\PSL}{PSL}
\DeclareMathOperator{\Vect}{Vect}
\newcommand{\C}{\mathbb C}
\newcommand{\mC}{\mathcal{C}}
\newcommand{\mH}{\mathcal{H}}
\newcommand{\mZ}{\mathcal{Z}}
\newcommand{\mK}{\mathcal{K}}
\newcommand{\Z}{\mathbb Z}
\newcommand{\tS}{\tilde{S}}
\newcommand{\ts}{\tilde{s}}
\newcommand{\ot}{\otimes}
\newcommand{\B}{\mathcal{B}}
\numberwithin{equation}{section}
\newtheorem{theorem}{Theorem}[section]
\newtheorem{conj}[theorem]{Conjecture}
\newtheorem{prop}[theorem]{Proposition}
\theoremstyle{definition}
\newtheorem{remark}[theorem]{Remark}
\newtheorem{ex}[theorem]{Example}
\newtheorem{definition}[theorem]{Definition}
\begin{document}
\title[On classification of modular categories]
{On classification of modular tensor categories}

\author{Eric Rowell}
\email{rowell@math.tamu.edu}
\address{Department of Mathematics\\
    Texas A\&M University \\
    College Station, TX 77843\\
    U.S.A.}

\author{Richard Stong}
\email{stong@ccrwest.org}
\address{Center for Communications Research\\
    4320 Westerra Court\\
    San Diego, CA 92121-1969\\
    U.S.A.}

\author{Zhenghan Wang}
\email{zhenghwa@microsoft.com}
\address{Microsoft Station Q\\CNSI Bldg Rm 2237\\
    University of California\\
    Santa Barbara, CA 93106-6105\\
    U.S.A.}

\thanks{The first author is partially supported by NSA grant H98230-08-1-0020.
The second and third authors are partially supported by NSF FRG grant DMS-034772.  The third author likes to thank
Nick Read for his insightful comments on earlier versions.}

\begin{abstract}

We classify all unitary modular tensor categories (UMTCs) of rank
$\leq 4$. There are a total of $35$ UMTCs of rank $\leq 4$ up to ribbon tensor equivalence.
Since the distinction between the modular $S$-matrix $S$ and $-S$ has both
topological and physical significance,
so in our convention there are a total of $70$ UMTCs of rank $\leq 4$.
In particular, there are two trivial UMTCs with $S=(\pm1)$. Each such UMTC can be obtained from
$10$ non-trivial prime UMTCs by direct product, and some symmetry
operations. Explicit data of the $10$ non-trivial prime UMTCs are given in Section $5$.
Relevance of UMTCs to topological quantum computation and various
conjectures are given in Section $6$.

\end{abstract}
\maketitle

\section{Introduction}

\label{s:intro}

A modular tensor category (MTC) in the sense of V.~ Turaev
determines uniquely a (2+1)-topological quantum field theory
(TQFT) \cite{Turaev} (a seemingly different definition appeared in
\cite{MS1}.)  The classification of MTCs is motivated by the
application of MTCs to topological quantum computing
\cite{Freedman1}\cite{Kitaev1}\cite{FKW}\cite{FLWu}\cite{FKLW}\cite{Preskill},
and by the use of MTCs in developing a physical theory of
topological phases of matter
\cite{Wilczek}\cite{MR}\cite{FNTW}\cite{Kitaev2}\cite{Wang}\cite{LW}\cite{DFN}.
G.~ Moore and N.~ Seiberg articulated the viewpoint that rational
conformal field theory (RCFT) should be treated as a
generalization of group theory \cite{MS2}.  The algebraic content
of both RCFTs and TQFTs is encoded by MTCs. Although two seemingly
different definitions of MTCs were used in the two contexts
\cite{MS1}\cite{Turaev}, the two notions are essentially
equivalent: an MTC in \cite{MS1} consists of essentially the basic
data of a TQFT in \cite{Walker}.  The theory of MTCs encompasses
the most salient feature of quantum mechanics in the tensor
product: superposition. Therefore, even without any applications
in mind, the classification of MTCs could be pursued as a quantum
generalization of the classification of finite groups.

There are two natural ways to organize MTCs: one by fixing a pair
$(G,\lambda)$, where $G$ is a compact Lie group, and $\lambda$ a
cohomology class $\in H^4(BG;\Z)$; and the other by fixing the
rank of an MTC, i.e. the number of isomorphism classes of simple
objects. If a conjecture of E.~Witten were true, then every MTC
would come from a Chern-Simons-Witten (CSW) TQFT labelled by a
pair $(G,\lambda)$ \cite{Witten}\cite{MS1}\cite{HRW}.
Classification by fixing a compact Lie group $G$ has been carried
out successfully for $G$=finite groups \cite{DW}\cite{FQ}, $G=T^n$
torus \cite{Ma}\cite{BM}, and $G=A,B,C,D$ simple Lie groups
\cite{FK}\cite{KW}\cite{TuWe}. In this paper, we will pursue the
classification by fixing the rank. This approach is inspired by
the study of topological phases of matter and topological quantum
computing. Another reason is that we have evidence that there
might be exotic (2+1)-TQFTs other than CSW theories \cite{HRW}.

Topological phases of matter are like artificial elements.  The
only known topological phases of matter are fractional quantum
Hall liquids: electron systems confined on a disk immersed in a
strong perpendicular magnetic field at extremely low temperatures
\cite{Wilczek}\cite{DFN}.  Electrons in the disk, pictured classically as
orbiting inside concentric annuli around the origin, organize
themselves into some topological order \cite{Wen}\cite{WW1}\cite{WW2}.
Therefore, the classification of topological phases of matter
resembles the periodic table of elements.  The periodic table does
not go on forever, and simpler elements are easier to find. The
topological quantum computing project is to find MTCs in Nature,
in particular those with non-abelian anyons. Therefore, it is
important that we know the simplest MTCs in a certain sense
because the chance for their existence is better.

There is a hierarchy of structures on a tensor category: rigidity,
pivotality, sphericity.  We will always assume that our category
is a fusion category: a rigid, semi-simple, $\C$-linear monoidal
category with finitely many isomorphism classes of simple objects,
and the trivial object is simple.  It has been conjectured that
every fusion category has a pivotal structure \cite{ENO}.
Actually, it might be true that every fusion category is
spherical. Another important structure on a tensor category is
braiding.  A tensor category with compatible pivotal and braiding
structures is called ribbon. In our case a ribbon category is
always pre-modular since we assume it is a fusion category.  For
each structure, we may study the classification problem.  The
classification of fusion categories by fixing the rank has been
pursued in \cite{Ostrik1}\cite{Ostrik2}. Since an MTC has
considerably more structures than a fusion category, the
classification is potentially easier, and we will see that this is
indeed the case in Sections \ref{rank23} and \ref{rankIV}. The
advantage in the MTC classification is that we can work with the
modular $S$ matrix and $T$ matrix to determine the possible fusion
rules without first solving the pentagon and hexagon equations.
For the classification of MTCs of a given rank, we could start
with the infinitely many possible fusion rules, and then try to
rule out most of the fusion rules by showing the pentagon
equations have no solutions. However, pentagon equations are
notoriously hard to solve, and we have no theories to practically
determine when a solution exists for a particular set of fusion
rules (Tarski's theorem on the decidability of the first-order
theory of real numbers provides a logical solution). So being able
to determine all possible fusion rules without solving the
pentagon equations greatly simplifies the classification for MTCs.
As shown in \cite{HH}, all structures on an MTC can be formulated
as polynomial equations over $\Z$. Hence the classification of MTC
is the same as counting points on certain algebraic varieties up to equivalence.
But all the data of an MTC can be presented over certain finite degree
Galois extension of $\Q$, probably over abelian Galois extension
of $\Q$ if normalized appropriately. Therefore, the classification
problem is closer to number theory than to algebraic geometry. The
argument in Sections \ref{rank23} and \ref{rankIV} is basically
Galois theory plus elementary yet complicated number theory. To
complete the classification, we need to solve the pentagons and
hexagons given the fusion rules. A significant complication comes from the choices of
bases of the Hom spaces when solving the pentagon equations. The
choices of basis make the normalization of $6j$ symbols into an
art: so far no computer programs are available to solve pentagons
with a fusion coefficient $>1$, but one set of such fusion rules
is solved completely \cite{HH}. Currently, there are no theories
to count the number of solutions of pentagon equations for a given
set of fusion rules without solving the pentagons.  For unitary MTCs,
there is tension between two desirable normalizations for $6j$
symbols: to make the $F$ matrix unitary, or to present all data of
the theory in an abelian Galois extension of $\Q$. For the
Fibonacci theory, unitarity of the $F$ matrix and abelianess of
the Galois extension of $\Q$ cannot be achieved simultaneously,
but with different $F$ matrices, each can be obtained separately
\cite{FW}. This is the reason that we will only define the Galois
group of a modular fusion rule and a modular data, but not the
Galois group of an MTC.

The main result of this paper is the classification of MTCs with
rank=$2,3$ and unitary MTCs of rank=$4$. The authors had obtained
the classification of all unitary MTCs of rank $\leq 4$ in 2004
\cite{Wang}. The delay is related to the open finiteness
conjecture:  \emph{There are only finitely many equivalence classes of MTCs for
any given rank.}
By Ocneanu rigidity the conjecture is equivalent to: \emph{There are only
finitely many sets of fusion rules for MTCs of a given rank.} Our
classification of MTCs of rank $\leq 4$ supports the conjecture.
We also listed all quantum group MTCs up to rank $\leq 12$ in
Section \ref{realization}.  Two
well-known constructs of MTCs are the quantum group method, and
the quantum double of spherical tensor categories or the Drinfeld
center. The quantum double is natural for MTCs from subfactor
theory using Ocneanu's asymptotic inclusions \cite{Evans}. It
seems that this method might produce exotic MTCs in the sense of
\cite{HRW}.

Our main technique is Galois theory. Galois theory was introduced
into the study of RCFT by J.~de Boer and J.~ Goeree \cite{dBG},
who considered the Galois extension $K$ of $\Q$ by adjoining all
the eigenvalues of the fusion matrices.  They made the deep
observation that the Galois group of the extension $K$ over $\Q$
is always abelian. This result was extended by A.~ Coste and T.~
Gannon who used their extension to study the classification of
RCFTs \cite{CG}.  Fusion
rules of an MTC are determined by the modular $S$-matrix through
Verlinde formulas. It follows that the Galois extension $K$ is the
same as adjoining to $\Q$ all entries of the modular $\tS$ matrix.
When a Galois group element applies to the $\tS$ matrix entry-wise,
this action is a multiplication of $\tS$ by a signed
permutation matrix, which first appeared in \cite{CG}.
It follows that the entries of the $\tS$ matrix are the same up
to signs if they are in the same orbit of a Galois group element.
For a given rank $\leq 4$, this allows us to determine all
possible $\tS$-matrices, therefore, all possible fusion rules.

Note that the Galois group of a modular data does not change the
fusion matrices, but it can change a unitary theory into a
non-unitary theory.  For example, the Galois conjugate of the
Fibonacci theory is the Yang-Lee theory, which is non-unitary. We
might expect that for each modular data, one of its Galois
conjugates would be realized by a unitary MTC. This is actually
false. For example, take a rank=$2$ modular data with
$\tS=\begin{pmatrix}
   1& -1\\
   -1 & -1
 \end{pmatrix}
 $, and $T=\begin{pmatrix}
   1& 0\\
   0& i
 \end{pmatrix}
 $.  No Galois actions can change the $\tS$ matrix, hence the
 quantum dimension of the non-trivial simple object from $-1$ to $1$, though the
 same fusion rules can be realized by a unitary theory: the semion theory.
 Reference \cite{rowell06} contains a set of fusion rules which has
non-unitary MTC realizations, but has no unitary realizations at
all.

The paper is organized as follows.  In Section \ref{galoistheory},
we study the implications of the Verlinde formulas using Galois
theory.  In Sections \ref{rank23} and \ref{rankIV}, we determine
all self-dual modular $\tS$ matrices of modular symbols of
rank=$2,3$, and unitary ones for rank=$4$.  Rank=$2$ is known to experts, and
rank=$3$ fusion rules have been previously classified \cite{CP}.  For modular data,
Theorems \ref{Srank2} and \ref{Srank3} can also be deduced from
\cite{Ostrik1}\cite{Ostrik2}.
 In Section \ref{realization}, we determine all UMTCs of rank $\leq 4$.
 In Section \ref{conjecture}, we discuss some open
questions about the structure and application of MTCs.  In
Appendix \ref{nonselfdual}, together with S.~Belinschi, we
determine all non-self dual unitary modular data of rank $\leq 4$.

We summarize the classification of all rank $\leq 4$ unitary MTCs
into Table \ref{rank4table}. There are a total of $70$ unitary
MTCs of rank $\leq 4$ (a total of $35$ up to ribbon tensor equivalence).
The count is done in Section \ref{count}.  Each such UMTC can be obtained
from $10$ non-trivial prime UMTCs by direct product, and some
symmetry operations.  The $10$ non-trivial prime UMTCs are the
semion MTC, the Fibonacci MTC or $(A_1,3)_{\frac{1}{2}}$, the
$\Z_3$ MTC, the Ising MTC, the $(A_1,2)$ MTC, the even half of an
$SU(2)$ MTC at level 5 or $(A_1,5)_{\frac{1}{2}}$, the $\Z_4$ MTC,
the toric code MTC, the $(D_4,1)$ MTC, and the even half of an
$SU(2)$ MTC at level $7$ or $(A_1,7)_{\frac{1}{2}}$. Their
explicit data are listed in Section \ref{explicitdata}. Out of the
$10$ non-trivial prime UMTCs, $9$ are quantum group categories for
a simple Lie group: the semion=$SU(2)_1$, the Fibonacci=$(G_2)_1$,
the $\Z_3$=$SU(3)_1$, the Ising=complex conjugate of $(E_8)_2$,
the $(A_1,2)$=$SU(2)_2$, the $\Z_4=SU(4)_1$, the toric
code$=Spin(16)_1$, the $(D_4,1)=Spin(8)_1$, and the
$(A_1,7)_{\frac{1}{2}}$=complex conjugate of $(G_2)_2$.
The Ising MTC and the $SU(2)_2$
MTC have the same fusion rules, but the Frobenius-Schur indicators
of the non-abelian anyon $\sigma$ are $+1,-1$, respectively.  The
toric code MTC and the $Spin(8)_1$ MTC have the same fusion rules,
but the twists are $\{1,1,1,-1\}$, and $\{1,-1,-1,-1\}$,
respectively.  We choose $q=e^{\frac{\pi i}{\ell}}$ in the quantum
group construction. In the Ising case, it is the $q=e^{-\frac{\pi
i}{\ell}}$ theory for $E_8$ at level=$2$. For notation and more
details, see Section \ref{explicitdata}.  We do not know how to construct
$(A_1,5)_{\frac{1}{2}}$ by cosets of quantum group categories.

 The information for each rank is contained in one row of Table \ref{rank4table}.
 Each box contains
information of the MTCs with the same fusion rules. The center
entry in a box denotes the realization of the fusion by a quantum
group category or their products.  We also use $Fib$ to denote the
Fibonacci category $(A_1,3)_{\frac{1}{2}}$. The upper left corner
has either $A$ or $N$, where $A$ means that all anyons are
abelian, and $N$ that at least one type of anyons is non-abelian.
The right upper corner has a number which is the number of
different unitary theories with that fusion rule.  If the lower
right corner has a $U$, it means that at least one type of anyons
has universal braiding statistics for topological quantum
computation.  The detailed information about which anyon is
abelian or non-abelian, universal or non-universal is given in
Section \ref{compiling}.  It is worth
noticing that the list of all fusion rules up to rank=$4$ agrees
with the computer search for RCFTs in \cite{GK}.  We believe this
continues to be true for rank=$5$.  The rank$=6$ list in
\cite{GK} is not complete.

Finally, we comment on the physical realization of UMTCs. The
existence of abelian anyons in $\nu=\frac{1}{3}$ FQH liquids is
established theoretically with experimental support, while
non-abelian anyons are believed to exist at the $\nu=\frac{5}{2}$
and $\nu=\frac{12}{5}$ plateaus (see \cite{DFN} and the
references therein). Current experimental effort is focused on FQH
liquids at $\nu=\frac{5}{2}$.  But the fermionic nature of
electrons complicates direct application of MTCs to FQH liquids
because only anyonic properties of bosonic systems can be
described fully by MTCs.  In other words, we need a refined
theory, e.g. a spin MTC, to describe a fermionic system
\cite{DW}\cite{BM}.

\begin{table}\caption{Unitary MTCs of rank$\leq4$}\label{rank4table}
\begin{tabular}{|lcr|lcr|lcr|lcr|lcr|}

\hline
 & & & A& & 2& &              &  & &            & & & & \\
 & & & &\;\;\;\;\bf{1} \;\;\;\;& &   &       &   & &            & & & & \\
 & & & &    & &   &              &   & &            & & & & \\
\hline
 & & &A&   &4&  &              &  &N&            &4& & & \\
 & & & &\;\;\;\;\bf{$\Z_2$} \;\;\;\;& &   &  &   & &\bf{$(A_1,3)_{\frac{1}{2}}$}& & & & \\
& & & &    & &   &              &   & &            &U& & & \\
\hline
 & & &A&   &4& N &              &  16&N&            &4& & & \\
 & & & &\;\;\;\;\bf{$\Z_3$}\;\;\;\;& &   & \bf{$(A_1,2)$} &   & &\bf{$(A_1,5)_{\frac{1}{2}}$}& & & & \\
& & & &    & &   &              &   & &            &U& & & \\
\hline
A& &10&A&   &8& N &              &  8&N&            &4&N& & 6\\
 &\bf{$\Z_2\times \Z_2$}& & &\;\;\;\;\bf{$\Z_4$}\;\;\;\;& &   & \bf{$(A_1,3)$} &   & &\bf{$(A_1,7)_{\frac{1}{2}}$}& & &\bf{$Fib\times Fib$}& \\
& & & &    & &   &              &   U& &            &U& & &U \\
 \hline
\end{tabular}
\end{table}

\section{Galois theory of fusion rules}\label{galoistheory}

In this section, we study the implication of Verlinde formulas for
fusion rules of MTCs.  For more related
discussion, see the beautiful survey \cite{Gannon}.

\begin{definition}\label{fusionrule}

\begin{enumerate}

\item A rank=$n$ label set is a finite set $\mL$ of $n$ elements
with a distinguished element, denoted by $0$, and an involution
$\hat{}: \mL\rightarrow \mL$ such that $\hat{0}=0$.  A label $i\in
\mL$ is self dual if $\hat{i}=i$.

The charge conjugation matrix is the $n\times n$ matrix
$C=(\delta_{i\hat{j}})$.  Note that $C$ is symmetric and
$C^2=I_n$, the $n\times n$ identity matrix.

\item A rank=$n$ modular fusion rule is a pair $(\mN;\tS)$, where
$\mN$ is a set of $n$ $n\times n$ matrices $N_i=(n_{i,j}^k)_{0\leq
j, k\leq n-1} $, indexed by a rank=$n$ label set $\mL$, with
$n_{i,j}^k\in \Q$ , and $\tS=(\ts_{ij})_{0\leq i,j\leq n-1}$ is an
$n\times n$ matrix satisfying the following: 

\vspace{.1in}

\begin{enumerate}

\item $\ts_{00}=1, \ts_{i,\hat{j}}=\overline{\ts_{i,j}}$, and all
$\ts_{i,0}$'s are non-zero;

\item  If we let $D=\sqrt{\sum_{i=0}^{n-1}\tilde{s}_{i,0}^2}$,
then $S=\frac{\tS}{D}$ is a symmetric, unitary matrix.

\end{enumerate}

Furthermore, the matrices $N_i$ in $\mN$ and $\tS$ are related by
the following:

\begin{equation}\label{Verlinde}
N_i \tS=\tS \Lambda_i
\end{equation}
for all $i\in \mL$, where
$\Lambda_i=(\delta_{ab}\lambda_{ia})_{n\times n}$ is diagonal, and $\lambda_{ia}=\frac{\ts_{ia}}{\ts_{0a}}$.

The identities (\ref{Verlinde}) or equivalently the Verlinde
formulas (\ref{verlindeformula}) below imply many symmetries among
$n_{i,j}^k$: $n_{0,j}^k=\delta_{jk},
n_{i,j}^k=n_{j,i}^k=n_{\hat{i},\hat{j}}^{\hat{k}}=n_{i,\hat{k}}^{\hat{j}}$.

The matrix $N_i$ will be called the $i$th fusion matrix.  From
identities (\ref{Verlinde}), the diagonal entries in $\Lambda_i$ are
the eigenvalues of $N_i$, and the columns of $\tS$ are the
corresponding eigenvectors.  The non-zero number $D$ will be
called the total quantum order, $d_i=\tilde{s}_{i0}$ the quantum
dimension of the $i$-th label, and $D^2=\sum_{i=0}^{n-1}d_i^2$ the
global quantum dimension.

\item A rank=$n$ modular symbol consists of a triple $(\mN;S,T)$.
The pair $(\mN;\frac{S}{s_{00}})$ is a rank=$n$ modular fusion
rule with all $n_{i,j}^k\in \N=\{0,1,2,\cdots\}$ (here $s_{00}$ is
the (0,0)-entry of the unitary matrix $S=(s_{ij})_{0\leq i,j\leq
n-1}$), and the $n\times n$ matrix
$T=(\delta_{ab}\theta_a)_{n\times n}$ is diagonal, and $\theta_0=1$. Furthermore,
$S$ and $T$ satisfy

(i) $(ST)^3=(D_{+}s_{00})S^2$;

(ii) $S^2=C$;

(iii) $\theta_i \in \U(1)$ and $\theta_{\hat{i}}=\theta_i$ for
each $i$;

where $D_{\pm}=\sum_{i=0}^{n-1}\theta_i^{\pm 1} d_i^2$ .  The
following identity can be deduced:
\begin{equation}\label{balance}
D_+D_{-}=D^2.
\end{equation}
The complex number $\theta_i$ will be called the twist of the
$i$th label.  Note that $s_{00}$ might be $-\frac{1}{D}$.  A
modular symbol is called unitary if each quantum dimension $d_i$
is the Frobenius-Perron eigenvalue of the corresponding fusion
matrix $N_i$.  In particular, the quantum dimensions $d_i$'s are
positive real numbers $\geq 1$.

\item A modular symbol $(\mN;S,T)$ is called a modular data if there is an MTC whose
fusion rules, modular $\tilde{S}$-matrix, and $T$-matrix are given by
$\mN, \frac{S}{s_{00}}, T$ of the modular symbol.

\item Let $\Lambda=\{\lambda_{ij}\}_{i,j\in \mL}$ for a rank=$n$
modular fusion rule, and let $K=\Q(\lambda_{ij}), i,j\in \mL$ be the
Galois extension of $\Q$. Then the Galois group $G$ of the Galois
field $K$ over $\Q$ is called the Galois group of the modular fusion rule.

\end{enumerate}

\end{definition}

We are interested in searching for $n+2$ tuples
$(N_0,\cdots,N_{n-1};\tilde{S},T)$ related in the correct fashion.
We will index the rows and columns of matrices by
$0,1,\cdots,n-1$. Since $N_i \tilde S=\tS \Lambda_i$, the columns
of $\tilde S$ must be eigenvectors of $N_i$ with eigenvalues
$\lambda_{i,0}$, $\lambda_{i,1}$, $\cdots$, and $\lambda_{i,n-1}$,
respectively. Looking at the first entries of these columns and of
$N_i\tilde S$, and using the only non-zero $1$ of the first row of
$N_i$, we see that $\lambda_{i,0}=d_i$, and
$d_j\lambda_{i,j}=\tilde s_{i,j}$.  It follows that $K$ is the
same as $\Q(\tilde{s}_{ij}), i,j\in \mL$. Since $\tilde S$ is
symmetric, we see that for $i\ne j$ we have
$d_j\lambda_{i,j}=d_i\lambda_{j,i}$, and $\tilde s_{i,j} =
d_i\lambda_{j,i}=d_j\lambda_{i,j}$ for all $i$ and $j$. Let
$n_{i,j}^k$ denote the $(j,k)$ entry of $N_i$. Since
$$N_i = \frac{1}{D^2} \tilde S \left(\begin{matrix} \lambda_{i,0} & 0 & \cdots &
0\cr 0 & \lambda_{i,1} & \cdots & 0\cr \cdots & \cdots & \cdots &
\cdots \cr 0 & 0 & \cdots & \lambda_{i,n-1}\end{matrix}\right)
\tS^{\dagger},$$ we compute for $0\le j,k\le n-1$
\begin{equation}\label{verlindeformula}
n_{i,j}^k =\sum_{m=0}^{n-1} \frac{\tilde s_{i,m} \ts_{j,m}
\overline{\ts_{k,m}}}{d_m} D^{-2} = \sum_{m=0}^{n-1} \lambda_{i,m}
\lambda_{j,m} \overline{\lambda_{k,m}} \frac{d_m^2}{D^2}.
\end{equation}

The fusion matrices can also be described equivalently by fusion
algebras.  For a rank=$n$ fusion rule, each label $i$ is
associated with a variable $X_i$. Then the fusion ring $R$ is the
free abelian ring $\Z[X_0,\cdots, X_{n-1}]$ generated by $X_i$'s
modulo relations (called fusion rules) $X_i
X_j=\sum_{i=0}^{n-1}n^k_{i,j}X_k.$ The fusion algebra will be
$F=R\otimes_{Z}K$, where $K$ is the Galois field of the fusion
rules above.  We may replace $K$ by $\C$. If the modular fusion
rule is realized by an MTC, then $X_i$ is an equivalence class of
simple objects, and the multiplication $X_iX_j$ is just the tensor
product.

There are modular symbols that are not modular data.

\begin{ex}\label{isingex}

Take the following
$$S=\frac{1}{2} \left(\begin{matrix} 1 & \sqrt{2} & 1\cr
\sqrt{2} & 0 & -\sqrt{2}\cr 1 & -\sqrt{2} &
1\end{matrix}\right),$$ and $T=\Diag(1,\theta,-1)$.  The fusion
matrices $N_i$ are determined by the formulas
(\ref{verlindeformula}), hence are independent of $\theta$.  They
are the same as those of the Ising MTC in Section
\ref{explicitdata}. Therefore, for any $\theta \in \U(1)$, we get
a modular symbol. But only when $\theta$ is a $16$th root of
unity, do we have modular data.

\end{ex}

Very likely the modular symbol of an MTC determines the MTC , and we do not know
when a modular symbol becomes a modular data.

\begin{prop}\label{modularprop}

If $(\mN;S,T)$ is a modular data, then we have:

\begin{enumerate}

\item $\theta_i\theta_j s_{ij}=\sum_k n_{\hat{i}j}^k
s_{k0}\theta_k$;

\item $\prod_j
\theta_j^{A_{ij}}=\theta_i^{\frac{4}{3}\sum_jA_{ij}}$;

where $A_{ij}=2n_{i\hat{i}}^jn_{ij}^i+n_{ii}^jn_{j\hat{i}}^i$;

\item Let $\nu_k=\frac{1}{D^2}\sum_{i,j\in \mL}n_{k,j}^i d_id_j\frac{\theta_i^2}{\theta_j^2}$,
then $\nu_k$ is $0$ if $k\neq \hat{k}$, and is $\pm 1$ if $k=\hat{k}$.  $\nu_k$ is called
the Frobenius-Schur indicator of $k$.

\item $D_{+}s_{00}=e^{\frac{\pi i c}{4}}$ for some $c\in \Q$.  The
rational number $c$ mod $8$ is called the topological central
charge of the modular data.

\end{enumerate}

\end{prop}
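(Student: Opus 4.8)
The plan is to fix an honest modular tensor category $\mathcal C$ realizing $(\mN;S,T)$ — which exists exactly because $(\mN;S,T)$ is a modular data — and to read each identity off the graphical calculus of ribbon categories, using the dictionary $s_{ij}=\ts_{ij}/D$, $d_i=\ts_{i0}=\dim_{\mathcal C}(X_i)$, $\theta_i$ the ribbon twist on a simple object of type $i$, and $n_{ij}^k=\dim\Hom(X_k,X_i\ot X_j)$. For (1), write $\ts_{ij}$ as the invariant of the $(i,j)$-colored Hopf link (equivalently, the partial quantum trace of the double braiding $c_{j,i}c_{i,j}$). Cutting open the $j$-component and decomposing $X_{\hat i}\ot X_j\cong\bigoplus_k X_k^{\oplus n_{\hat i j}^k}$ turns the link into a sum over $k$ of a single $k$-colored unknot carrying one positive curl, which evaluates to $\theta_k d_k$; straightening the two strands of the original link contributes the scalar $\theta_i^{-1}\theta_j^{-1}$. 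Hence $\ts_{ij}=\theta_i^{-1}\theta_j^{-1}\sum_k n_{\hat i j}^k\theta_k d_k$, and dividing by $D$ and using $s_{k,0}=d_k/D$ gives (1). This is the familiar ``Hopf link with twist'' computation (Turaev \cite{Turaev}, and Bakalov--Kirillov).

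Identity (2) is Vafa's. The double braiding $c_{j,i}c_{i,j}\colon X_i\ot X_j\to X_i\ot X_j$ acts on the isotypic summand $\Hom(X_k,X_i\ot X_j)$ by the scalar $\theta_k\theta_i^{-1}\theta_j^{-1}$ (balancing axiom), so its determinant over $X_i\ot X_j$ is $\prod_k(\theta_k\theta_i^{-1}\theta_j^{-1})^{n_{ij}^k}$. Computing the determinant of the same composite after transporting it, via the hexagon and pentagon axioms, to braiding operators on tensor products built only from $X_i$ and $X_{\hat i}$ expresses this determinant as a pure power of $\theta_i$; the fusion multiplicities that recombine during these moves assemble precisely into $A_{ij}=2n_{i\hat i}^j n_{ij}^i+n_{ii}^j n_{j\hat i}^i$, and the triple tensor product in the hexagon produces the exponent $\tfrac43\sum_j A_{ij}$. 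I would follow Vafa's original derivation and its later categorical reworkings. I expect this to be the main obstacle: unlike (1), (3), (4), whose content is visible in one picture, (2) requires tracking fusion multiplicities through repeated hexagons — where a sign or a factor is easily misplaced — and verifying the combinatorial identities among the $n_{ij}^k$ that force the two determinant computations to agree.

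For (3), note that $\nu_k$ is, categorically, the trace of the pivotal ``flip'' involution on $\Hom(\one,X_k\ot X_k)$. Since $X_k$ is simple, $\dim\Hom(\one,X_k\ot X_k)=\dim\Hom(X_k^{*},X_k)$ equals $1$ when $k=\hat k$ and $0$ otherwise, and the trace of an involution on a space of dimension $\le1$ is $0$ (empty case) or $\pm1$ (one-dimensional case); this is the asserted dichotomy. To recover the displayed formula, expand the flip in the graphical calculus as a single $k$-colored strand linked once with the ``$\Lambda$-loop'' $\sum_i d_i\,(\text{unknotted }i\text{-loop})$ that implements the projection onto $\one$ in a modular category, and evaluate with the Verlinde formula (\ref{verlindeformula}); the result is $\frac1{D^2}\sum_{i,j}n_{k,j}^i d_id_j\theta_i^2\theta_j^{-2}$, which is Bantay's formula (cf. Bantay, and Ng--Schauenburg for the categorical account).

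For (4), first $|D_+s_{00}|=1$: indeed $\overline{D_+}=D_-$ and $D_+D_-=D^2$ by (\ref{balance}), while $|s_{00}|=1/D$; so it suffices to show $D_+s_{00}$ is a root of unity. By Vafa's theorem every $\theta_i$ is a root of unity, hence $\det T=\prod_i\theta_i$ is a root of unity; and $S$ is unitary with $S^2=C$ a permutation matrix, so $(\det S)^2=\det C\in\{1,-1\}$ and $\det S$ is a fourth root of unity. Taking determinants in $(ST)^3=(D_+s_{00})S^2$ yields $(D_+s_{00})^n=\det(S)\det(T)^3$, a root of unity; therefore $D_+s_{00}$ is a root of unity, and writing it as $e^{\pi ic/4}$ defines the rational number $c$, the topological central charge. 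This is the Gauss-sum statement of Anderson--Moore and of Vafa.
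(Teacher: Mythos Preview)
Your sketches for (1)--(3) are correct and match what the paper does: the paper's own proof is simply a string of citations (Bakalov--Kirillov for (1) and (2), Bantay for (3)), and what you have written is precisely the content one finds in those references. Your caveat about (2) is fair --- tracking the multiplicities through the hexagon moves is the real work --- but since the paper itself just points to \cite[Theorem~3.1.19]{BK}, you are not expected to reproduce that bookkeeping here.

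For (4) your argument is correct but takes a different route from the paper. The paper derives (4) from Theorem~\ref{deboergoereetheorem}: the Galois group of the modular data $\mK_N=\Q(\ts_{ij},D,\theta_i)$ is abelian, so by Kronecker--Weber $\mK_N\subset\Q(\zeta_m)$; one then argues that $D_+s_{00}$ and all its Galois conjugates lie on the unit circle, whence by Kronecker's theorem it is a root of unity. Your argument instead bypasses Galois theory entirely: once Vafa's theorem (part (2)) gives that each $\theta_i$ is a root of unity, $\det T$ is a root of unity, $\det S$ is a fourth root of unity because $S^4=I$, and taking determinants in $(ST)^3=(D_+s_{00})S^2$ yields $(D_+s_{00})^n=\det(S)\det(T)^3$. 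This is the Anderson--Moore/Vafa Gauss-sum argument and is more elementary and self-contained than the paper's appeal to the abelian Galois structure; the paper's route, on the other hand, fits its broader theme of exploiting the Galois symmetry of modular data.
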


\begin{proof}
For (1), see \cite[Eq. (3.1.2)]{BK} on page 47.  For (2), it is
\cite[Theorem 3.1.19]{BK} found on page 57.  Formula (3) from
\cite{Bantay1} for RCFTs can be  generalized to MTCs. (4)
follows from Theorem \ref{deboergoereetheorem}.
\end{proof}

Proposition \ref{modularprop} (2) implies that the $\theta_i$ are
actually roots of unity of finite order, which is often referred
to as \emph{Vafa's Theorem}. But from example \ref{isingex}, we
know that this is not true for general modular symbols, in
particular $\Q(\theta_i)$ might not be algebraic for modular
symbols. This leads to:
\begin{definition}

Given a modular data $(\mN; S,T)$, let $\mK_N$ be the Galois
field $\Q(\tilde{s}_{ij},D,\theta_i),i,j\in \mL$.  Then the Galois
group of $\mK_N$ over $\Q$ will be called the Galois group of the
modular data.

\end{definition}

\begin{theorem}\label{deboergoereetheorem}

\begin{enumerate}
\item (de Boer-Goeree theorem): The Galois group of a modular
fusion rule is abelian.

\item The Galois group of a modular data is abelian.
\end{enumerate}
\end{theorem}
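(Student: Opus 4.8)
The plan is to obtain (1) by recalling the de Boer--Goeree argument \cite{dBG} in the signed-permutation form of Coste--Gannon \cite{CG}, and then to deduce (2) from (1), Vafa's theorem, and the Gauss-sum identity by elementary field theory.

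For (1): the fusion matrices $N_i$ are rational and simultaneously diagonalized by $\tilde S$, with $N_i\tilde S=\tilde S\Lambda_i$; since $\tilde s_{i,j}=d_j\lambda_{i,j}$ with all $d_j=\tilde s_{j,0}$ nonzero, the columns of the invertible matrix $\tilde S$ are pairwise non-proportional, so the $n$ characters $\lambda^{(j)}: X_i\mapsto\lambda_{i,j}$ of the fusion ring are distinct and are exactly the $\C$-algebra homomorphisms $R\otimes\C\to\C$. A $\sigma\in\mathrm{Gal}(K/\Q)$, $K=\Q(\tilde s_{i,j})$, permutes the roots of each integral polynomial $\det(xI-N_i)$, hence permutes these characters: there is $\hat\sigma\in S_n$ with $\sigma(\lambda_{i,j})=\lambda_{i,\hat\sigma(j)}$. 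Feeding $\hat\sigma$ back into $N_i\tilde S=\tilde S\Lambda_i$ and using that $\tilde S$ is symmetric with $\tilde S^2=D^2C$ (a consequence of Definition \ref{fusionrule}(2)(a),(b)), one gets the Coste--Gannon formula $\sigma(\tilde s_{i,j})=\epsilon_\sigma(i)\,\tilde s_{\hat\sigma(i),j}=\epsilon_\sigma(j)\,\tilde s_{i,\hat\sigma(j)}$ with $\epsilon_\sigma(k)\in\{\pm1\}$; the argument of \cite{dBG,CG} then shows the group of such signed permutations is abelian, so $\mathrm{Gal}(K/\Q)$ is abelian (equivalently $K\subseteq\Q(\zeta_{M_0})$ for some $M_0$, by Kronecker--Weber). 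I would cite this last step rather than reproduce it.

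For (2): it is enough to show every generator of $\mK_N=\Q(\tilde s_{i,j},D,\theta_i)$ lies in some cyclotomic field. Part (1) covers $\Q(\tilde s_{i,j})$; Vafa's theorem (Proposition \ref{modularprop}(2)) says each $\theta_i$ is a root of unity, covering $\Q(\theta_i : i\in\mL)$. For $D$, take determinants in $(ST)^3=(D_+s_{00})S^2$: this gives $(D_+s_{00})^n=\det S\cdot(\det T)^3$, which is a root of unity because $(\det S)^2=\det C=\pm1$ (from $S^2=C$, $C^2=I$) and $\det T=\prod_i\theta_i$ is a root of unity by Vafa; hence $\alpha:=D_+s_{00}$ is a root of unity (this reproves Proposition \ref{modularprop}(4) without circularity). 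Since $s_{00}=\pm D^{-1}$ we get $D=\pm\alpha^{-1}D_+$, while $D_+=\sum_i\theta_i\,\tilde s_{i,0}^2\in\Q(\tilde s_{i,j},\theta_i)$; thus all generators of $\mK_N$ lie in the compositum $\Q(\tilde s_{i,j})\cdot\Q(\{\theta_i\})\cdot\Q(\alpha)$, a compositum of abelian extensions of $\Q$, hence abelian over $\Q$. A subfield of an abelian extension of $\Q$ is Galois over $\Q$ with abelian Galois group (a quotient of an abelian group), so $\mathrm{Gal}(\mK_N/\Q)$ is abelian.

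The main obstacle is entirely in (1): the claim that the signed permutations $(\hat\sigma,\epsilon_\sigma)$ realized by Galois automorphisms pairwise commute is the substance of the de Boer--Goeree/Coste--Gannon theorem, and I would use it as a black box. Granting it, (2) is routine, the only subtlety being to avoid invoking Proposition \ref{modularprop}(4) circularly --- which the determinant computation above circumvents.
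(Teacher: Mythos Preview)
Your proposal is correct. For part~(1), you set up exactly the same signed-permutation framework the paper develops in Theorem~\ref{Galoissymmetry}(1)--(3), but you then cite \cite{dBG,CG} for the commutativity, whereas the paper carries out the short explicit computation (Theorem~\ref{Galoissymmetry}(4)): applying the identity $\sigma(\tilde s_{j,k})=d_{\sigma(0)}^{-1}\e_{\sigma(k),\sigma}\tilde s_{j,\sigma(k)}$ twice and using the symmetry $d_i\lambda_{j,i}=d_j\lambda_{i,j}$ gives $\sigma_1\sigma_2(\tilde s_{j,k})=\sigma_2\sigma_1(\tilde s_{j,k})$ in two lines. (A minor slip: your recalled Coste--Gannon formula omits the $d_{\sigma(0)}^{-1}$ prefactor present in the paper's identity (\ref{actionidentity1}); since you only cite the result this does no harm, but it is worth correcting.) For part~(2), your argument is actually more explicit than anything the paper provides: the paper states~(2) but gives no separate proof, relying implicitly on~(1), Vafa's theorem, and the cyclotomic nature of $D$. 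Your determinant computation $(D_+s_{00})^n=\det S\cdot(\det T)^3$ cleanly shows $\alpha=D_+s_{00}$ satisfies $\alpha^{nM}=1$ for some $M$, hence is a root of unity, and then $D=\pm\alpha^{-1}D_+\in\Q(\tilde s_{i,j},\theta_i,\alpha)$; this both avoids circularity with Proposition~\ref{modularprop}(4) and makes the inclusion $\mK_N\subset\Q(\zeta_m)$ transparent.
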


By the Kronecker-Weber theorem, there is an integer $m$ such that
$\mK_N\subset \Q(\zeta_m)$, where $\zeta_m=e^{\frac{2\pi i}{m}}$.
The smallest such $m$ for $\mK_N$ is called the conductor of
$\mK_N$, and the order of $T$ always divides $N$ (we intentionally
build $N$ into the notation $\mK_N$). The Galois group of
$\Q(\zeta_N)$ is the cyclic group of units $l$ such that
$\gcd(l,N)=1$.  Each $l$ acts on $\mK_N$ as the Frobenius map
$\sigma_l: \zeta_N\rightarrow \zeta_N^l$. Consequently,
$\sigma_l(T)=T^l$ and $\sigma_l(S)=S\tilde{P_{\sigma}}$, where the
signed permutation matrix $\tilde{P_\sigma}$ corresponds to the
Galois element $\sigma$ in the Galois group of the modular fusion
rule.

It is known that the fusion algebra of a rank=$n$ MTC is
isomorphic to the function algebra of $n$ points.  A Galois group
element $\sigma$ of the associated modular fusion rule induces an
isomorphism of the fusion algebra.  It follows that $\sigma$
determines a permutation of the label set.  When we have only a
modular fusion rule, the two algebra structures on the fusion
algebra a priori might not be isomorphic to each other. But still
a Galois group element of the modular fusion determines a
permutation of the label set and the de Boer-Goeree theorem holds.
Actually what we are using in this paper are identities among
modular $\tS$ entries up to some parity signs
$\epsilon_{i,\sigma}=\pm 1$ associated to each Galois element
$\sigma$.  Such parity signs first appeared in \cite{CG} for
Galois automorphisms of $\Q(\lambda_{i,j},D)$. 

First we note the following easy, but very useful fact that the
ordered set of eigenvalues of $N_i$ determines the label $i$
uniquely.

\begin{prop}\label{determinelabel}

There do not exist indices $j\ne k$ such that
$\lambda_{i,j}=\lambda_{i,k}$ for all $i$ for any modular fusion
rule $(\mN;\tS)$.

\end{prop}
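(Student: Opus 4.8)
The plan is to argue by contradiction using the Verlinde-type structure recorded in equation~(\ref{Verlinde}) together with the orthogonality of the columns of $\tilde S$. Suppose there were indices $j\ne k$ with $\lambda_{i,j}=\lambda_{i,k}$ for every $i\in\mL$. Recall from the discussion after Definition~\ref{fusionrule} that the $j$th column of $\tilde S$ is an eigenvector of every $N_i$ with eigenvalue $\lambda_{i,j}$, and likewise the $k$th column is an eigenvector with eigenvalue $\lambda_{i,k}$. If these two eigenvalue sequences coincide, then the two columns of $\tilde S$ indexed by $j$ and $k$ are both eigenvectors of the commuting family $\{N_i\}$ with exactly the same eigenvalue on each $N_i$.

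The key step is then to exploit the fact that the columns of $\tilde S$ are mutually orthogonal (since $S=\tilde S/D$ is unitary), so in particular the $j$th and $k$th columns are orthogonal and nonzero. Concretely, I would read off entries: using $\tilde s_{i,m}=d_i\lambda_{m,i}=d_m\lambda_{i,m}$ and the hypothesis $\lambda_{i,j}=\lambda_{i,k}$ for all $i$, one gets $d_j\lambda_{i,j}=d_k\lambda_{i,k}$ rearranged, i.e. $\tilde s_{i,j}=\tfrac{d_j}{d_k}\tilde s_{i,k}$ would have to fail against symmetry — more cleanly, taking $i=0$ gives $\lambda_{0,j}=d_j$ and $\lambda_{0,k}=d_k$, so the hypothesis forces $d_j=d_k$; and then $\tilde s_{i,j}=d_j\lambda_{i,j}=d_k\lambda_{i,k}=\tilde s_{i,k}$ for all $i$, so the $j$th and $k$th columns of $\tilde S$ are literally equal. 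An equal pair of columns cannot be orthogonal unless it is zero, but the first entry of column $m$ is $\tilde s_{0,m}=d_m\ne 0$ by axiom~(a) of Definition~\ref{fusionrule}(2). This contradiction proves the proposition.

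The only mildly delicate point is bookkeeping with the two conventions $\tilde s_{i,j}=d_i\lambda_{j,i}=d_j\lambda_{i,j}$ and making sure the index on which we evaluate is the right one; everything else is immediate from unitarity of $S$ and the nonvanishing of the quantum dimensions. I expect no real obstacle here — the statement is essentially the observation that distinct simple objects (labels) are separated by their action on $\tilde S$, and it is being recorded precisely because it will be invoked repeatedly in Sections~\ref{rank23} and~\ref{rankIV} to pin down permutations of the label set induced by Galois elements.
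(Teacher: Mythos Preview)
Your approach is the same as the paper's in spirit---show that columns $j$ and $k$ of $\tilde S$ are proportional, contradicting orthogonality---but one step contains an index slip. You write that ``taking $i=0$ gives $\lambda_{0,j}=d_j$ and $\lambda_{0,k}=d_k$,'' and from this conclude $d_j=d_k$. That is not correct: since $N_0=I_n$, every eigenvalue $\lambda_{0,m}$ equals $1$, so setting $i=0$ in the hypothesis $\lambda_{i,j}=\lambda_{i,k}$ gives only the triviality $1=1$. (What \emph{is} true is $\lambda_{i,0}=d_i$, with the indices in the other order.)

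Fortunately you do not need $d_j=d_k$. Your earlier line $\tilde s_{i,j}=\tfrac{d_j}{d_k}\tilde s_{i,k}$ already shows the $j$th and $k$th columns of $\tilde S$ are scalar multiples of one another with nonzero ratio $d_j/d_k$; since each has nonzero first entry, their Hermitian inner product is a nonzero multiple of $\sum_i|\tilde s_{i,k}|^2=D^2>0$, contradicting the orthogonality of distinct columns of the unitary matrix $S=\tilde S/D$. This is exactly the paper's one-line argument. If you want to recover the stronger conclusion that the columns are literally equal, use the equal-length condition $\sum_i|\tilde s_{i,j}|^2=\sum_i|\tilde s_{i,k}|^2=D^2$ to get $d_j^2=d_k^2$, rather than the $i=0$ specialization.
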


\begin{proof}

If there were such indices, then the dot product of rows $j$ and
$k$ of $\tilde S$ would be $D^2=\sum_{i=0}^{n-1} |\ts_{i,j}|^2
>0$, a contradiction.

\end{proof}

Except (5), the following theorem is contained in \cite{CG}.

\begin{theorem}\label{Galoissymmetry}

Let $G$ be the Galois group of a rank=$n$ modular fusion rule $(\mN;\tS)$.
Then

\begin{enumerate}

\item the simultaneous action of the Galois group $G$ on the set
$\Lambda=\{\lambda_{ij}\}$ gives an injective group homomorphism
$\iota: G\rightarrow S_n$, where $S_n$ is the permutation group of
$n$ letters; for $\sigma \in G$, $\iota(\sigma)(i)$ is the
associated element in $S_n$;

\item for any $\sigma \in G$, the matrix
$\tilde{P_{\sigma}}=d_{\sigma(0)}\tS^{-1}\sigma(\tS)$ is a signed
permutation matrix; furthermore, the map $\sigma\rightarrow
\tilde{P_{\sigma}}$ gives a group homomorphism from $G$ to the
signed permutation matrices modulo $\pm 1$ which lifts $\iota$;

\item for each $\sigma \in G$, there are $\e_{i,\sigma}=\pm 1$
such that
\begin{equation}\label{actionidentity1}
\sigma(\tilde{s}_{j,k})=\frac{1}{d_{\sigma(0)}}\e_{\sigma(k),\sigma}\tilde{s}_{j,\sigma(k)}.
\end{equation}
Moreover,
\begin{equation}\label{actionidentity}
\tilde{s}_{j,k}=\e_{\sigma(j),\sigma}\e_{k,\sigma}\tilde{s}_{\sigma(j),\sigma^{-1}(k)},
\end{equation}
and
\begin{equation}\label{inverse}
\e_{\sigma^{-1}(k),\sigma^{-1}}=\e_{\sigma(0),\sigma}\e_{0,\sigma}\e_{k,\sigma};
\end{equation}

\item the Galois group $G$ is abelian;

\item If $n$ is even, then
$\prod_{i=0}^{n-1}\e_{i,\sigma}=(-1)^{\sigma}$.  If $n$ is odd,
then $D\in K$, and $\sigma(D)=\e_{\sigma}\cdot
\frac{D}{d_{\sigma(0)}}$, where $\e_{\sigma}=\pm 1$. We have
$\prod_{i=0}^{n-1}\e_{i,\sigma}=\e_{\sigma} \cdot (-1)^{\sigma}$.


\end{enumerate}

\end{theorem}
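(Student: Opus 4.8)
The plan is to follow Coste--Gannon \cite{CG} for parts (1)--(4) and to read off the new statement (5) by computing $\det\tilde P_\sigma$ in two ways.

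\emph{Parts (1)--(4).} First I would work with the vectors $v_j=(\la_{ij})_{i\in\mL}$, i.e.\ the $j$th column of $\tS$ rescaled so that its $0$th coordinate $\la_{0j}=1$; the Verlinde relations say $N_iv_j=\la_{ij}v_j$, and by Proposition \ref{determinelabel} the $n$ joint eigenlines $\C v_j$ are distinct. Since $N_i\in M_n(\Q)$, each $\sigma\in G$ permutes these lines and fixes $\la_{0j}=1$, so $\sigma(v_j)=v_{\iota(\sigma)(j)}$ for a unique $\iota(\sigma)\in S_n$; $\iota$ is a homomorphism, injective since $K=\Q(\la_{ij})$ --- this is (1) --- and, writing $\sigma(j):=\iota(\sigma)(j)$, one has $\sigma(\la_{ij})=\la_{i,\sigma(j)}$. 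As $d_j=\la_{j0}\in K$ and $\ts_{ij}=d_j\la_{ij}$, the $j$th column of $\sigma(\tS)$ is $\sigma(d_j)v_{\sigma(j)}$, so $\sigma(\tS)=\tS M_\sigma$ with $M_\sigma$ the monomial matrix carrying $\sigma(d_j)/d_{\sigma(j)}$ in position $(\sigma(j),j)$, and $\tilde P_\sigma=d_{\sigma(0)}\tS^{-1}\sigma(\tS)=d_{\sigma(0)}M_\sigma$. From $\ts_{i\hat j}=\overline{\ts_{ij}}$ I get $\overline{\tS}=\tS C$, so unitarity of $S=\tS/D$ becomes $\tS^2=D^2C$. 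Next, $G$ is abelian (Theorem \ref{deboergoereetheorem}(1)), so $\sigma$ commutes with complex conjugation; hence each $d_k$ and each $\sigma(d_k)$ is real, $\widehat{\sigma(0)}=\sigma(0)$, and applying $\sigma$ to $D^2=\sum_i\la_{i0}^2$ gives $\sigma(D^2)=d_{\sigma(0)}^{-2}\sum_i\ts_{i,\sigma(0)}^2=d_{\sigma(0)}^{-2}(\tS^2)_{\sigma(0),\sigma(0)}=d_{\sigma(0)}^{-2}D^2$. Commuting $\sigma$ past $\dagger$ in $\tS\tS^\dagger=D^2I_n$ and substituting $\sigma(\tS)=\tS M_\sigma$ yields $M_\sigma M_\sigma^\dagger=d_{\sigma(0)}^{-2}I_n$; combined with the reality of $d_k$ and $\sigma(d_k)$, the nonzero entries of $d_{\sigma(0)}M_\sigma$ are $\pm1$, so $\tilde P_\sigma$ is a signed permutation matrix lifting $\iota(\sigma)$ --- this is (2). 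Then (3) follows from $\sigma(\ts_{jk})=\sigma(d_k)\la_{j,\sigma(k)}$ and $\sigma(d_k)=\e_{\sigma(k),\sigma}d_{\sigma(k)}/d_{\sigma(0)}$ (which defines the $\e_{i,\sigma}$), with (\ref{actionidentity}) and (\ref{inverse}) obtained by comparing to the same identity for $\sigma^{-1}$, and (4) is Theorem \ref{deboergoereetheorem}(1).

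\emph{Part (5).} By (3) the only nonzero entry of $\tilde P_\sigma$ in column $j$ is $\e_{\sigma(j),\sigma}$, lying in row $\sigma(j)$; as $j$ ranges over $\mL$ so does $\sigma(j)$, so
\begin{equation*}
\det\tilde P_\sigma=(-1)^\sigma\prod_{i\in\mL}\e_{i,\sigma}.
\end{equation*}
On the other hand $\tilde P_\sigma=d_{\sigma(0)}\tS^{-1}\sigma(\tS)$ gives $\det\tilde P_\sigma=d_{\sigma(0)}^n\,\sigma(\det\tS)/\det\tS$, and $\det\tS\in K$ since the entries $\ts_{ij}=d_j\la_{ij}$ lie in $K$. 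For a self-dual fusion rule $C=I_n$, so $\tS^2=D^2I_n$ forces $\det\tS=\pm D^n$, whence $D^n\in K$. If $n$ is even, then $D^n=(D^2)^{n/2}\in K$ up to a rational sign, so $\sigma(\det\tS)/\det\tS=\sigma(D^n)/D^n=(\sigma(D^2)/D^2)^{n/2}=d_{\sigma(0)}^{-n}$, giving $\det\tilde P_\sigma=1$ and $\prod_i\e_{i,\sigma}=(-1)^\sigma$. If $n$ is odd, then $D=D^n(D^2)^{(1-n)/2}\in K$, and $\sigma(D)^2=\sigma(D^2)=D^2/d_{\sigma(0)}^2$ forces $\sigma(D)=\e_\sigma D/d_{\sigma(0)}$ with $\e_\sigma=\pm1$; then $\sigma(D^n)/D^n=\e_\sigma d_{\sigma(0)}^{-n}$, so $\det\tilde P_\sigma=\e_\sigma$ and $\prod_i\e_{i,\sigma}=\e_\sigma(-1)^\sigma$.

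\emph{Where the difficulty lies.} Parts (1)--(4) are essentially bookkeeping with the simultaneous eigenvector basis $\{v_j\}$. The one substantive step is the evaluation of $\det\tS$ and its $G$-action, where the parity of $n$ enters through $\tS^2=D^2C$: when $C=I_n$ the eigenvalues of $\tS$ are $\pm D$ and the computation is clean, but for $C\ne I_n$ one has $\det C=\pm1$ and $\det\tS$ acquires a factor $\sqrt{\det C}$, so for odd $n$ the conclusion $D\in K$ is the one appropriate to the self-dual classification carried out in Sections \ref{rank23}--\ref{rankIV}. The other delicate point is keeping the three sign systems --- the permutation parity $(-1)^\sigma$, the column signs $\e_{i,\sigma}$, and the scalar $\e_\sigma$ --- consistently aligned throughout.
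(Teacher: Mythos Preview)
Your proof is correct; the route for parts (1)--(3) differs from the paper's, while part (5) is essentially identical to what the paper does.

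For (1)--(3) you work directly with the normalized joint eigenvectors $v_j$ and read off the monomial matrix $M_\sigma$ immediately, then invoke abelianness of $G$ (via Theorem~\ref{deboergoereetheorem}) so that $\sigma$ commutes with complex conjugation, forcing the nonzero entries of $d_{\sigma(0)}M_\sigma$ to be real, hence $\pm1$. The paper instead sets $C_\sigma=\tS^{-1}\sigma(\tS)$, argues it is monomial using Proposition~\ref{determinelabel}, and then obtains $C_\sigma^{-1}=d_{\sigma(0)}^2 C_\sigma^T$ purely from symmetry of $\tS$ and $\sigma(\tS)$; this shows $d_{\sigma(0)}C_\sigma$ is orthogonal and hence its diagonal part has entries $\pm1$ \emph{without} appealing to abelianness. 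The payoff is that the paper then proves (4) from scratch by a direct computation of $\sigma_1\sigma_2(\ts_{jk})$ versus $\sigma_2\sigma_1(\ts_{jk})$, so Theorem~\ref{Galoissymmetry} stands as a self-contained proof of the de~Boer--Goeree result rather than citing it. Your approach is cleaner conceptually but imports (4) as a black box; the paper's is more self-contained. Your derivation of (\ref{actionidentity}) ``by comparing to the same identity for $\sigma^{-1}$'' is a little compressed---the paper obtains it more transparently from symmetry of $\sigma(\tS)$, i.e.\ from $P_\sigma^{-1}B_\sigma\tS=\tS B_\sigma P_\sigma$.

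For (5) both you and the paper compute $\det\tilde P_\sigma$ in two ways, using $(\det\tS)^2=D^{2n}$ and the action of $\sigma$ on $D^2$; the arguments are the same. Your caveat that this step, as written, presumes $C=I_n$ (so that $\tS^2=D^2I_n$ rather than $D^2C$) is well taken and applies equally to the paper's proof; since the classification in Sections~\ref{rank23}--\ref{rankIV} is carried out in the self-dual case, this does not affect the applications.
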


We are going to use $\sigma$ for both the element of the Galois
group $G$ and its associated element of $S_n$.  When $\sigma \in
G$ applies to a matrix, $\sigma$ applies entry-wise.

\begin{proof}

Let $K={\mathbb Q}[\{\lambda_{i,j}\}_{0\le i,j\le n-1}]$ be the
Galois extension of ${\mathbb Q}$ generated by the eigenvalues of
all the $N_i$ and let $G$ be the associated Galois group as above.
The action of $G$ on the eigenvalues gives an injection $G\to
S_{n}\times S_{n}\times \cdots \times S_{n}$, where there are
$n-1$ factors. Note that we have not assumed the $N_i$ have
distinct eigenvalues, therefore this map is not necessarily unique
and is not necessarily a group homomorphism. This is not a problem
as we will resolve the ambiguity shortly. Just fix one such map
for now. Let $(\sigma_1,\sigma_2,\cdots, \sigma_{n-1})$ denote the
image of $\sigma\in G$ under this injection. Note that a priori,
there is no relationship between the $\sigma_i$. Let $\Lambda_i$
be the diagonal matrix with diagonal entries $\lambda_{i,j}$, so
$$N_i=\tilde S \Lambda_i \tilde
S^{-1}.$$ Let $P_{\sigma_i}=(\delta_{i=\sigma_i(j)})_{0\le i,j\le
n-1}$ be the permutation matrix corresponding to $\sigma_i$. Since
$\sigma(\lambda_{i,j})=\lambda_{i,\sigma_i(j)}$, we have
$\sigma(\Lambda_i)=P_{\sigma_i}^{-1} \Lambda_i P_{\sigma_i}$.
Since $N_i$ is rational we have
$$\tilde S \Lambda_i \tilde S^{-1} = N_i =\sigma(N_i) = \sigma(\tilde S)
P_{\sigma_i}^{-1} \Lambda_i P_{\sigma_i} \sigma(\tilde S)^{-1}.$$
Rewriting this gives
$$\Lambda_i [\tilde S^{-1} \sigma(\tilde S) P_{\sigma_i}^{-1}]=
[\tilde S^{-1} \sigma(\tilde S) P_{\sigma_i}^{-1}] \Lambda_i ,$$
Hence $B_{i,\sigma}=\tilde S^{-1} \sigma(\tilde S)
P_{\sigma_i}^{-1}$ commutes with $\Lambda_i$. It follows that
$B_{i,\sigma}$ is block diagonal, with blocks  corresponding to
the equal eigenvalues of $N_i$. In formulas, if the $(j,k)$ entry
of $B_{i,\sigma}$ is nonzero, then $\lambda_{i,j}=\lambda_{i,k}$.
Let $\tilde S^{-1} \sigma(\tilde S) = B_{i,\sigma} P_{\sigma_i} =
C_{\sigma}$. Note two facts, if the $(j,k)$ entry of $C_{\sigma}$
is nonzero, then the $(j,\sigma_i(k))$ entry of $B_{i,\sigma}$ is
nonzero and hence $\lambda_{i,j}=\lambda_{i,\sigma_i(k)}$. The
second fact is that $C_{\sigma}$ (as the notation suggests) does
not depend on $i$, only on $\sigma$. Suppose $C_{\sigma}$ has 2
nonzero entries in column $k$, say the $(j,k)$ and $(\ell,k)$
entries. Then
$\lambda_{i,j}=\lambda_{i,\sigma_i(k)}=\lambda_{i,\ell}$ for all
$i$, contradicting the Proposition \ref{determinelabel} above. If
a row or column of $C_{\sigma}$ is all zeroes, then
$\det(C_{\sigma})=0$, a contradiction. Hence $C_{\sigma}$ has
exactly one nonzero entry in every row and in every column. Thus
there is a unique permutation $\sigma\in S_n$ and a diagonal
matrix $B_{\sigma}$ such that $C_{\sigma}=B_{\sigma} P_{\sigma}$.
Note that we are now using $\sigma$ for both the element of the
Galois group and its associated element of $S_n$. Note that
$$C_{\sigma \sigma^{\prime}} =\tilde S^{-1} \sigma \sigma^{\prime}(\tilde
S) = \tilde S^{-1} \sigma(\tilde S) \sigma(\tilde S^{-1}
\sigma^{\prime}(\tilde S)) = C_{\sigma}
\sigma(C_{\sigma^{\prime}}).$$ From which it follows that the map
$G\to S_{n}$ is a group homomorphism. Thus we have proved that the
simultaneous action of the Galois group $G$ on the eigenvalues
$\lambda_{i,j}$ of $N_i$ for all $i$ gives an injective group
homomorphism $G\to S_{n}$.

Note that the squared length of column zero of $\tilde S$ is
$D^2=\sum_{i=0}^{n-1} d_i^2$ which must be equal to the squared
length of column $\sigma(0)$. Hence
$$D^2 = \sum_{i=0}^{n-1} d_{\sigma(0)}^2 \lambda_{i,\sigma(0)}^2 =
d_{\sigma(0)}^2 \sigma\left(\sum_{i=0}^{n-1}
\lambda_{i,0}^2\right)= d_{\sigma(0)}^2 \sigma(D^2).$$ Rewriting
gives
$$\sigma\left(\frac{1}{D^2}\right) = \frac{d_{\sigma(0)}^2}{D^2}.$$
It follows that $G$ acts in the same way on the quantities $\{
d_j/D^2\}$.  The Verlinde
formulas (\ref{verlindeformula}) encode the symmetry of the $N_i$ matrices,
and give us
the complete symmetry under interchanging the last $n-1$ $N_i$ and
simultaneously reordering the last $n-1$ rows and columns of all
matrices.  Thus $n_{i,j}^k$ is invariant under $G$ and hence is
necessarily rational if we define it first to be only in $\R$.

Transposing the identity $\tilde S^{-1} \sigma(\tilde
S)=C_{\sigma}$ and inverting this identity gives the two equations
$\sigma(\tilde S) \tilde S^{-1} = C_{\sigma}^T$ and
$$C_{\sigma}^{-1} = \sigma(\tilde S)^{-1} \tilde S =
\frac{D^2}{\sigma(D^2)} \sigma(\tilde S) \tilde S^{-1} =
d_{\sigma(0)}^2 C_{\sigma}^T.$$ Hence the matrices $d_{\sigma(0)}
C_{\sigma}$ and $d_{\sigma(0)} B_{\sigma}$ are orthogonal. Since
$B_{\sigma}$ is diagonal it follows that
$$B_{\sigma}= \frac{1}{d_{\sigma(0)}} \left(\begin{matrix} \e_{0,\sigma} &
0 & \cdots & 0\cr 0 & \e_{1,\sigma} & \cdots & 0\cr \cdots &
\cdots & \cdots & \cdots \cr 0 & \cdots & \cdots &
\e_{n-1,\sigma}\end{matrix}\right)$$ for some choices of
$\e_{i,\sigma}=\pm 1$. The map $\sigma\mapsto d_{\sigma(0)}
C_{\sigma}$ gives a group homomorphism from $G$ to the signed
permutation matrices modulo $\pm 1$ which lifts the homomorphism
$\iota$ of (1).

Rewrite the definition of $C_{\sigma}$ as $\sigma(\tilde S)=
\tilde S B_{\sigma} P_{\sigma}$. Picking out the $(j,k)$ entry, we
have
$\sigma(\tilde{s}_{j,k})=\frac{1}{d_{\sigma(0)}}\e_{\sigma(k),\sigma}\tilde{s}_{j,\sigma(k)}$.
 Moreover, since the left hand side is symmetric we get $\tilde S B_{\sigma}
P_{\sigma} = P_{\sigma}^{-1} B_{\sigma} \tilde S$. In coordinates
this condition becomes $\tilde s_{j,k}=\e_{k} \e_{\sigma(j)}
\tilde s_{\sigma(j),\sigma^{-1}(k)}$. Consider the action of $G$
on pairs $(j,k)$ defined by $\sigma\times (j,k)\mapsto
(\sigma(j),\sigma^{-1}(k))$. Then we see that $\vert \tilde
s_{j,k}\vert$ is constant on orbits of this action.

To see identity (\ref{inverse}), we apply $\sigma^{-1}$ to identity
(\ref{actionidentity1}) and compare with identity
(\ref{actionidentity}). Note that
$\ts_{\sigma^{-1}(0),\sigma(0)}=\e_{\sigma(0),\sigma}\e_{0,\sigma}$
by identity (\ref{actionidentity}).

Given $\sigma_1, \sigma_2 \in G$, consider first $\sigma_2\sigma_1
(\tilde{s}_{j,k})=\sigma_2(\frac{1}{d_{\sigma_1(0)}}\e_{\sigma_1(k),\sigma_1}\tilde{s}_{j,\sigma_1(k)})$

$=\sigma_2(\frac{1}{d_{\sigma_1(0)}}\e_{\sigma_1(k),\sigma_1}\tilde{s}_{\sigma_1(k),j})$$=
\frac{1}{d_{\sigma_2(0)}\lambda_{\sigma_1(0),\sigma_2(0)}}\e_{\sigma_1(k),\sigma_1}\e_{\sigma_2(j),
\sigma_2}\tilde{s}_{\sigma_1(k),\sigma_2(j)}$.

Then consider
\begin{eqnarray*} &&\sigma_1\sigma_2 (\tilde{s}_{j,k})=\sigma_1\sigma_2
(\tilde{s}_{k,j})=
\sigma_1(\frac{1}{d_{\sigma_2(0)}}\e_{\sigma_2(j),\sigma_2}\tilde{s}_{k,\sigma_2(j)})
=\\
&&\sigma_1(\frac{1}{d_{\sigma_2(0)}}\e_{\sigma_2(j),\sigma_2}\tilde{s}_{\sigma_2(j),k})
=\frac{1}{d_{\sigma_1(0)}\lambda_{\sigma_2(0),\sigma_1(0)}}\e_{\sigma_2(j),\sigma_2}\e_{\sigma_1(k),
\sigma_1}\tilde{s}_{\sigma_2(j),\sigma_1(k)}.
\end{eqnarray*}

Hence $\sigma_1\sigma_2=\sigma_2\sigma_1$ using
$d_i\lambda_{j,i}=d_j\lambda_{i,j}$, i.e. $G$ is abelian.

Suppose now that the rank $n=2r$ is even. Then $\det(\tilde S)^2 =
D^{2n}$ hence $\det(\tilde S)=\pm D^{2r}$. Since the determinant
is a polynomial in the entries of the matrix $\det(\sigma(\tilde
S))=\pm \sigma(D^2)^r$, with the same sign as $\det(\tilde S)$.
Hence $\det(\tilde S^{-1} \sigma(\tilde S))=d_{\sigma(0)}^{-n}$.
Since $\det(C_{\sigma})=d_{\sigma(0)}^{-n} (-1)^{\sigma}
\prod_{j=0}^{n-1} \e_{j,\sigma}$, we conclude $\prod_{j=0}^{n-1}
\e_{j,\sigma}=(-1)^{\sigma}$. For odd rank $n=2r+1$, $\det(\tilde
S)=\pm D^{2r+1}$, hence $D\in K$. Hence $\sigma(D)=\e_{\sigma}
D/d_{\sigma(0)}, \e_{\sigma}=\pm 1$ and one gets the formula
$\prod_{j=0}^{n-1} \e_{j,\sigma}=\e_{\sigma} (-1)^{\sigma}$.

\end{proof}

Note that the resulting equations (\ref{actionidentity}) for the
entries $\tilde s_{j,k}$ are unchanged if we replace $B_{\sigma}$
with $-B_{\sigma}$. We will use this to assume $\e_0=1$ below.

Next we will use the fact that the $\theta_i \in \U(1)$ to produce
a series of twist inequalities on the entries of $\tilde S$.

\begin{theorem}\label{twistinequlity}

Given a modular symbol $(\mN;S,T)$ and $S$ is a real matrix, then

\begin{enumerate}

\item $2 \max_i \tilde s_{i,j}^2 \le D\vert \tilde{s}_{jj}\vert +
D^2$ for any $j$. \label{jjtwistinequality}

\item If $j\neq k$, then $D \le \frac{1}{\vert \tilde
s_{j,k}\vert} \sum_{i=0}^{n-1} \vert \tilde s_{i,j}\tilde
s_{i,k}\vert.$\label{jktwistinequality}

\item $\sum_{j=0}^{n-1} \frac{\e_{\sigma(j)} \tilde
s_{j,\sigma(j)}}{\theta_j \theta_{\sigma(j)}} = D_-
\sum_{i:~\sigma(i)=i} \theta_i \e_{\sigma(i)}.$

\end{enumerate}

\end{theorem}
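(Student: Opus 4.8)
The plan is to derive all three parts from a single ``master identity'' between $\tilde S$ and $T$. Starting from the modular-symbol axioms $(ST)^3=(D_+s_{00})S^2$ and $S^2=C$, I would multiply the first relation by $S^{-1}$ on the left, then by $T^{-1}$ on the left, then by $T^{-1}$ on the right, obtaining $STS=(D_+s_{00})\,T^{-1}ST^{-1}$. Since $S=s_{00}\tilde S$ with $s_{00}^2=1/D^2$ (because $S$ is unitary with real $(i,0)$-entries $s_{i,0}=s_{00}d_i$), rescaling yields
\begin{equation}
\tilde S\,T\,\tilde S \;=\; D_+\,T^{-1}\tilde S\,T^{-1}. \tag{$\star$}
\end{equation}
Reading off the $(j,k)$-entry of $(\star)$, and using that $\tilde S$ is real and symmetric, gives $\dfrac{\tilde s_{j,k}}{\theta_j\theta_k}=\dfrac1{D_+}\sum_{m}\tilde s_{j,m}\tilde s_{m,k}\theta_m$ for all $j,k$. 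Also, since $\overline{\theta_i}=\theta_i^{-1}$ and the $d_i$ are real, $\overline{D_+}=D_-$, so $(\ref{balance})$ gives $|D_+|=D$.

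For part (1), I would put $k=j$ in the entry identity, abbreviate $a_m=\tilde s_{j,m}^2\ge0$ (so $\sum_m a_m=D^2$, the squared length of a row of $\tilde S$), and take absolute values: $\bigl|\sum_m a_m\theta_m\bigr|=D\,|\tilde s_{j,j}|$. For any $i$, applying the triangle inequality to $a_i\theta_i=\sum_m a_m\theta_m-\sum_{m\ne i}a_m\theta_m$ gives $a_i\le D\,|\tilde s_{j,j}|+(D^2-a_i)$, hence $2a_i\le D|\tilde s_{j,j}|+D^2$, and maximizing over $i$ finishes (1). For part (2), I would take absolute values directly in the $(j,k)$-entry identity and use the triangle inequality together with $|\theta_m|=1$ and the symmetry of $\tilde S$, getting $D\,|\tilde s_{j,k}|\le\sum_i|\tilde s_{i,j}\tilde s_{i,k}|$; for $j\ne k$ with $\tilde s_{j,k}\ne0$ this is the asserted bound. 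Both of these are immediate once $(\star)$ is in hand.

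For part (3), $\sigma$ denotes a Galois-group element of the modular fusion rule $(\mN;\tilde S)$, with associated permutation $\sigma\in S_n$ and parity signs $\e_i:=\e_{i,\sigma}$ from Theorem $\ref{Galoissymmetry}$. First I would note that realness of $S$ forces self-duality: $\tilde s_{i,\hat j}=\overline{\tilde s_{i,j}}=\tilde s_{i,j}$ for all $i$, so $\lambda_{i,\hat j}=\lambda_{i,j}$ for all $i$, and Proposition $\ref{determinelabel}$ gives $\hat j=j$; hence $C=I_n$, so the conditions ``$\sigma(i)=\hat i$'' and ``$\sigma(i)=i$'' coincide, and $\tilde S^2=D^2C=D^2I_n$. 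Substituting the $(j,\sigma(j))$-case of the entry identity into the left side of (3) and interchanging summation,
\[
\sum_{j}\frac{\e_{\sigma(j)}\,\tilde s_{j,\sigma(j)}}{\theta_j\theta_{\sigma(j)}}
=\frac1{D_+}\sum_{m}\theta_m\Bigl(\sum_{j}\e_{\sigma(j)}\,\tilde s_{m,j}\,\tilde s_{m,\sigma(j)}\Bigr).
\]
By $(\ref{actionidentity1})$, $\tilde s_{m,\sigma(j)}=d_{\sigma(0)}\,\e_{\sigma(j)}\,\sigma(\tilde s_{m,j})$, so (using $\e_{\sigma(j)}^2=1$ and symmetry) the inner sum equals $d_{\sigma(0)}\sum_j\tilde s_{m,j}\,\sigma(\tilde s_{m,j})=d_{\sigma(0)}\bigl(\tilde S\,\sigma(\tilde S)\bigr)_{m,m}$. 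From the proof of Theorem $\ref{Galoissymmetry}$, $\sigma(\tilde S)=\tilde S B_\sigma P_\sigma$ with $B_\sigma=\tfrac1{d_{\sigma(0)}}\Diag(\e_0,\dots,\e_{n-1})$ and $P_\sigma$ the permutation matrix of $\sigma$; hence $\tilde S\,\sigma(\tilde S)=\tilde S^2B_\sigma P_\sigma=D^2B_\sigma P_\sigma$, whose $(m,m)$-entry is $\tfrac{D^2}{d_{\sigma(0)}}\e_m$ when $\sigma(m)=m$ and $0$ otherwise. Putting this back and using $D^2/D_+=D_-$ gives exactly the right side of (3).

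The routine computational parts are (1) and (2); the main obstacle is (3), where two things must be seen: first, that realness of $S$ makes the fusion rule self-dual, so that the ``diagonal'' condition $\sigma(i)=i$ is the one appearing on the right-hand side; and second, that after the sum-interchange the Galois relation $\sigma(\tilde S)=\tilde S B_\sigma P_\sigma$ combined with $\tilde S^2\propto I_n$ is exactly what collapses the double sum into a sum over the fixed points of $\sigma$.
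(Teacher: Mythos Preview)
Your proof is correct and follows essentially the same route as the paper: both start from the identity $T\tilde S T\tilde S T=D_+\tilde S$ (your $(\star)$ is just this conjugated by $T^{-1}$), read off the $(j,k)$-entry, and then use the triangle inequality for (1) and (2) and the Galois symmetry plus orthogonality for (3). The only cosmetic difference is in (3): the paper applies identity~(\ref{actionidentity}) to rewrite $\tilde s_{i,\sigma(j)}=\e_{\sigma(j)}\e_{\sigma(i)}\tilde s_{\sigma(i),j}$ and then invokes row orthogonality of $\tilde S$ directly, whereas you use identity~(\ref{actionidentity1}) and package the same computation as $\tilde S\,\sigma(\tilde S)=\tilde S^2 B_\sigma P_\sigma=D^2B_\sigma P_\sigma$; your explicit derivation of self-duality ($C=I_n$) from realness of $S$ via Proposition~\ref{determinelabel} is a point the paper leaves implicit.
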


\begin{proof}

Rewrite the twist equation as $T \tilde S T \tilde S T = D_+
\tilde S$. Then taking the $(j,k)$ entry of this formula gives
$$\theta_j \theta_k \sum_{i=0}^{n-1} \theta_i \tilde s_{i,j}\tilde s_{i,k} =
D_+ \tilde s_{j,k}.$$ Since $\vert D_+\vert=D$ and $\vert
\theta_i\vert=1$, the largest of the $n+1$ numbers $\vert \tilde
s_{i,j}\tilde s_{i,k}\vert$, $0\le i\le n-1$, and $D \vert \tilde
s_{j,k}\vert$ must be at most the sum of the other $n$. If $j=k$,
then $\sum_i \tilde s_{i,j}^2 = D^2 > D\vert \tilde{s}_{jj}\vert
$. Hence this inequality is trivial unless the largest is one of
the first $n$ and we get
$$2 \max_i \tilde s_{i,j}^2 \le D \vert \tilde{s}_{jj}\vert+ \sum_{i=0}^{n-1} \tilde s_{i,j}^2.$$
If $j\ne k$ then $\sum_i \tilde s_{i,j}\tilde s_{i,k} = 0$ and the
nontrivial case is
$$D \le \frac{1}{\vert \tilde s_{j,k}\vert} \sum_{i=0}^{n-1}
\vert \tilde s_{i,j}\tilde s_{i,k}\vert.$$ We will refer to these
as the twist inequalities.

Suppose $\sigma\in G$ corresponds to signs $\e_i$ as above.  We
drop $\sigma$ for notational easiness.  Multiply the identity
above by $\e_{\sigma(j)}/(\theta_j \theta_{\sigma(j)})$, set
$k=\sigma(j)$, and sum over $j$. The result is
$$\sum_{j=0}^{n-1} \e_{\sigma(j)} \sum_{i=0}^{n-1} \theta_i \tilde s_{i,j}\tilde
s_{i,\sigma(j)} = D_+ \sum_{j=0}^{n-1} \frac{\e_{\sigma(j)} \tilde
s_{j,\sigma(j)}}{\theta_j \theta_{\sigma(j)}}.$$ Interchanging the
sums and using the fact that $\tilde
s_{i,\sigma(j)}=\e_{\sigma(j)} \e_{\sigma(i)} \tilde
s_{\sigma(i),j}$ gives
$$\sum_{i=0}^{n-1} \theta_i \e_{\sigma(i)} \sum_{j=0}^{n-1} \tilde s_{i,j}\tilde
s_{\sigma(i),j} = D_+ \sum_{j=0}^{n-1} \frac{\e_{\sigma(j)} \tilde
s_{j,\sigma(j)}}{\theta_j \theta_{\sigma(j)}}.$$ By orthogonality
of the rows of $\tilde S$, the innermost sum on the left is zero
if $i\ne \sigma(i)$ and $D^2=D_+D_-$ if $i=\sigma(i)$. Hence
$$\sum_{j=0}^{n-1} \frac{\e_{\sigma(j)} \tilde s_{j,\sigma(j)}}{\theta_j
\theta_{\sigma(j)}} = D_- \sum_{i:~\sigma(i)=i} \theta_i
\e_{\sigma(i)}.$$ If $\sigma$ is fixed point free, then
$\sum_{j=0}^{n-1} \frac{\e_{\sigma(j)} \tilde
s_{j,\sigma(j)}}{\theta_j \theta_{\sigma(j)}} =0$.

\end{proof}

\section{Rank=$2$ and $3$ modular $S$ matrices}\label{rank23}

In this section, we determine all possible modular $S$ matrices
for rank=$2$ and $3$ modular symbols.  The rank=$3$ case first appeared in \cite{CP}, but our
proof is new.

\begin{theorem}\label{Srank2}

The only possible rank=$2$ modular $\tS$ matrices of some modular
symbols are

\begin{enumerate}

\item
\[
 \begin{pmatrix}
   1& \e\\
   \e & -1
 \end{pmatrix},
 \]
 where $\e^2=1$;

\item

 \[
 \begin{pmatrix}
   1& \varphi \\
   \varphi & -1
 \end{pmatrix},
 \] where $\varphi^2=1+\varphi$.

 \end{enumerate}

 \end{theorem}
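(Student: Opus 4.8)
The plan is to use the defining conditions of a rank-$2$ modular fusion rule together with the twist inequalities of Theorem~\ref{twistinequlity} to pin down the single unknown entry. Write $\tS=\begin{pmatrix} 1 & x\\ x & y\end{pmatrix}$: symmetry of $S$ forces the off-diagonal entries to agree, and since $n=2$ the unique non-trivial label is self-dual (the involution fixes $0$ and hence fixes the other element), so all entries are real; by condition (a) in Definition~\ref{fusionrule} we have $\ts_{00}=1$ and $\ts_{10}=\ts_{01}=x=d_1$ with $d_1\neq 0$. First I would extract the constraints on $y$. Unitarity of $S=\tS/D$ with $D^2=1+x^2$ gives orthogonality of the two rows: $x+xy=0$, so $y=-1$ since $x\neq 0$. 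Thus $\tS=\begin{pmatrix}1 & x\\ x & -1\end{pmatrix}$ with $D^2=1+x^2$, and it only remains to determine the possible values of $x=d_1$.

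Next I would apply the Verlinde formula~\eqref{verlindeformula} (equivalently, that $N_1$ has integer entries) to constrain $x$. The fusion matrix $N_1$ is determined by $\tS$: its eigenvalues are $\lambda_{1,0}=d_1=x$ and $\lambda_{1,1}=\ts_{1,1}/\ts_{0,1}=-1/x$, and writing $N_1=\begin{pmatrix}0 & 1\\ 1 & n\end{pmatrix}$ (using $n_{0,j}^k=\delta_{jk}$, $n_{1,0}^k=\delta_{1k}$, and symmetry), the trace of $N_1$ gives $n = x - 1/x$, while the determinant gives $-1 = -1$, automatically. So the single fusion coefficient is $n = x - 1/x \in \Z_{\geq 0}$ (it is a non-negative integer since $N_1$ has entries in $\N$ for a modular symbol, or rationality plus the twist inequality below). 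Solving $x^2 - nx - 1 = 0$ with $x>0$ (or $x\neq 0$) yields $x = \tfrac{n+\sqrt{n^2+4}}{2}$. The case $n=0$ gives $x=1$, i.e. $\tS=\begin{pmatrix}1 & 1\\ 1 & -1\end{pmatrix}$; more generally the sign ambiguity in $x$ (replacing $x$ by $-x$ rescales the label but keeps $\tS$ self-dual) reproduces $\e$ with $\e^2=1$ in case (1) when $n=0$. The case $n=1$ gives $x=\varphi$ with $\varphi^2=1+\varphi$, which is case (2).

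The remaining point, and the step I expect to be the real obstacle, is ruling out $n\geq 2$, i.e.\ showing no modular symbol can have this larger fusion coefficient. Here I would invoke the twist inequality Theorem~\ref{twistinequlity}(\ref{jjtwistinequality}) with $j=1$: $2\max_i \ts_{i,1}^2 \leq D|\ts_{1,1}| + D^2$. Since $\ts_{1,1}=-1$ and $\ts_{0,1}=x$ with $x>1$ for $n\geq 1$, the maximum on the left is $x^2$, so we need $2x^2 \leq D + D^2 = \sqrt{1+x^2} + 1 + x^2$, i.e. $x^2 - 1 \leq \sqrt{1+x^2}$. Squaring (valid once $x^2\geq 1$) gives $x^4 - 2x^2 + 1 \leq 1 + x^2$, i.e. $x^4 \leq 3x^2$, so $x^2 \leq 3$ and hence $x \leq \sqrt 3 < 2$. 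But $n = x - 1/x$ with $x<2$ forces $n < 2 - 1/2 < 2$, so $n \in \{0,1\}$. This closes the argument. I would double-check the edge behavior of the twist inequality (that the "nontrivial case" genuinely applies, i.e.\ that the largest of the $n+1$ numbers is indeed one of the $\ts_{i,j}^2$ rather than $D|\ts_{jj}|$) and confirm the identification of both solution families with the stated normal forms, including accounting for the sign $\e$ via the rescaling $x \mapsto -x$.
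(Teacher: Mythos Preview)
Your derivation of $\tS=\begin{pmatrix}1&d\\ d&-1\end{pmatrix}$ and of the fusion coefficient $n=d-1/d\in\N$ matches the paper exactly. The paper then uses $D_+D_-=D^2$ (i.e.\ $|D_+|=D$) applied to $D_+=1+\theta d^2$ to get $\theta+\theta^{-1}=1-d^2=-nd$, hence $|nd|\le 2$. Your diagonal twist inequality with $j=1$ yields the same bound $d^2\le 3$ once $|d|\ge 1$; both come from the same $(ST)^3$ relation, so for the positive root $d=(n+\sqrt{n^2+4})/2$ your argument and the paper's are equivalent and both give $n\in\{0,1\}$.

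The gap is the negative root $d=(n-\sqrt{n^2+4})/2\in(-1,0)$ for $n\ge 1$. Your sign-flip remark does not dispose of it: $d\mapsto -d$ sends $n\mapsto -n$, not $n\mapsto n$, and there is no relabelling of a rank-$2$ modular symbol effecting this flip (the trivial label is distinguished). With $|d|<1$ the maximum in your twist inequality is $\ts_{1,1}^2=1$, so it reads $2\le D+D^2$, which is vacuous since $D>1$; the off-diagonal version gives only $D\le 2$, also satisfied. The paper closes this branch by a different argument: from $\theta+\theta^{-1}=-nd\in\Q(\sqrt{n^2+4})$ one has $[\Q(\theta+\theta^{-1}):\Q]\le 2$, and then, taking $\theta$ to be a root of unity, a finite check over $\theta=e^{p\pi i/q}$ with $q\in\{1,\dots,6\}$ leaves only $n=0,1$. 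You will need some argument beyond the twist inequalities to handle this case.
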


\begin{proof}

Since all labels are self-dual, $\tS$ is a symmetric real unitary
matrix of the form $
 \begin{pmatrix}
   1& d\\
   d& -1
 \end{pmatrix}
 $.  The fusion matrix $N_1$ is of the form $
 \begin{pmatrix}
   0& 1\\
   1 & m
 \end{pmatrix},$  so we have $d^2=1+md$.  Simplifying
 $D_+D_{-}=D^2$ leads to $\theta+\theta^{-1}=1-d^2=-m\cdot d$.
 Since $\theta \in \U(1)$, so $|md|\leq 2$.
 If $d>0$, then $d=\frac{m+\sqrt{m^2+4}}{2}$, hence $m=0,1$.  If $d<0$,
 then $d=\frac{m-\sqrt{m^2+4}}{2}$, hence $[\Q(\theta+\theta^{-1}):\Q]\leq 2$.  It follows that
 $\theta=e^{\frac{p \pi i}{q}}$ for some $(p,q)=1$, and $q$ is one of $\{1,2,3,4,5,6\}$.  Direct computation
 shows there are no integral solutions $p,q$ for $2\cos(\frac{p\pi}{q})=-m\cdot \frac{m-\sqrt{m^2+4}}{2}$
  except for $q=2,5$ and $m=0,1$.

 \end{proof}

 \begin{theorem}\label{Srank3}

 Then
 the only possible rank=$3$ modular $\tS$ matrices of some modular symbols up to permutations are

 \begin{enumerate}

 \item

  \[
 \begin{pmatrix}
   1& \e& \e\\
   \e& \omega &\omega^2 \\
   \e& \omega^2 &\omega
 \end{pmatrix},
 \]

 where $\e^2=1$, and $\omega^3=1, \omega\neq 1$.

 \item

 \[
 \begin{pmatrix}
   1& d & 1 \\
   d & 0 & -d \\
   1 & -d  & 1
 \end{pmatrix},
 \] where $d^2=2$.

 \item

  \[
 \begin{pmatrix}
   1& d_1 & d_2 \\
   d_1 & -d_2 & 1 \\
   d_2 & 1  & -d_1
 \end{pmatrix},
 \]
 where $d_1$ is a real root of $x^3-2 x^2 - x +1$ and $d_2=d_1/(d_1-1)$ which is a root of $x^3-x^2-2x+1$.
 The largest $d_1=\frac{2\cos(\pi/7)}{2\cos(\pi/7)-1}
=2.246979604\dots,$
and $d_2=2\cos(\pi/7)=1.801937736\dots$.
 \end{enumerate}

\end{theorem}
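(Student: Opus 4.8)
\emph{Overview and first split.} The plan is to organize everything around the duality involution on the label set $\mL=\{0,1,2\}$: since $\widehat 0=0$ and the rank is odd, either (I) all three labels are self-dual, or (II) $\widehat 1=2\neq 1$. \emph{Case (II)} is short. The relations $\ts_{i,\widehat\jmath}=\overline{\ts_{i,j}}$ with $\widehat 0=0$ make row and column $0$ real, force $d_1=d_2=:d\in\R$, and, writing $\ts_{1,1}=a$, force $\ts_{1,2}=\ts_{2,1}=\bar a$, $\ts_{2,2}=a$, so $\tS=\left(\begin{smallmatrix}1&d&d\\ d&a&\bar a\\ d&\bar a&a\end{smallmatrix}\right)$. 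Unitarity of $S=\tS/D$ then gives $\Re a=-\tfrac12$ and $2|a|^2=1+d^2$. Now $N_1$ is a non-negative integer matrix with first row $(0,1,0)$ and first column $(0,0,1)^T$ whose eigenvalues are $d,\,a/d,\,\bar a/d$, so $\tr N_1=d-1/d$ and $\det N_1=(1+d^2)/(2d)$ lie in $\Z$ while $d$ is an algebraic integer; writing $d-1/d=p\in\Z$ gives $d^2-pd-1=0$ and hence $\det N_1=1/d+p/2$, and since $1/d=d-p$ is an algebraic integer $p$ must be even and $d=\pm1$. Then $|a|=1$, $\Re a=-\tfrac12$, so $a$ is a primitive cube root of unity and $\tS$ is the matrix of case~(1).

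\emph{Case (I): set-up and Galois reduction.} Here $C=I$, $\tS$ is real symmetric, and $S=\tS/D$ is real orthogonal with $S^2=I$; write $\tS=\left(\begin{smallmatrix}1&d_1&d_2\\ d_1&x&y\\ d_2&y&z\end{smallmatrix}\right)$ with $D^2=1+d_1^2+d_2^2$, so that $\tS^2=D^2I$ expresses $x,z$ through $d_1,d_2,y$. By Theorem~\ref{Galoissymmetry} the Galois group $G$ of the modular fusion rule is an abelian subgroup of $S_3$ acting on $\mL$ with $0$ distinguished, hence --- up to the permitted relabeling of $\{1,2\}$ --- one of $\{e\}$, $\langle(12)\rangle$, $\langle(01)\rangle$, $\langle(012)\rangle$. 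In each case identity~(\ref{actionidentity}) gives $|\ts_{j,k}|=|\ts_{\sigma(j),\sigma^{-1}(k)}|$, which fixes the absolute value of every entry in terms of $d_1,d_2$ (and $\ts_{0,0}=1$); the signs are then constrained by the orthogonality relations $\tS^2=D^2I$, and for $G=\langle(012)\rangle$ the sharper identity~(\ref{actionidentity1}) additionally forces $y=1$.

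\emph{Case (I): the four sub-cases.} One cuts down the resulting one-parameter family using that the $N_i$ are non-negative integer matrices (so their traces, determinants and entries are non-negative integers and their eigenvalues $\ts_{i,\cdot}/d_{\cdot}$ are algebraic integers) together with the twist conditions. For $G=\{e\}$ all $\lambda_{i,j}$ are rational, hence integral; the column-norm identities give $d_1^2\mid 1+d_2^2$ and $d_2^2\mid 1+d_1^2$, so $|d_1|=|d_2|=1$, whence $\tS$ is a symmetric $\pm1$ matrix with $\tS^2=3I$, i.e.\ a symmetric Hadamard matrix of order $3$ --- which does not exist. For $G=\langle(12)\rangle$ one gets $|d_1|=|d_2|$, $z=x$, $y=-1-x$, $d_1^2=2x(x+1)$, and $\det N_1=-d_1/2\in\Z$, so $d_1=\pm2$; combined with algebraic-integrality of the eigenvalues $x/d_1,y/d_2$ this produces a non-integral half-integer eigenvalue --- impossible. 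For $G=\langle(01)\rangle$ one gets $x=1$, $y=-2d_1/d_2$, $z=|d_1|-1$, $d_2^2=2|d_1|$; $\tr N_1\ge0$ forces $d_1>0$, $\tr N_1=d_1+1/d_1-1=:m-1\in\Z$ with $m\ge2$, and the shape of $N_1$ forces $n_{1,1}^2=\sqrt{2m-4}\in\Z_{\ge0}$; the twist inequality~(\ref{jjtwistinequality}) at $j=1$, $2d_1^2\le D+D^2$, excludes the larger root for every admissible $m\ge4$, and the twist equation $(ST)^3=(D_+s_{00})S^2$ (equivalently Vafa finiteness of the $\theta_i$ plus a finite check) excludes the remaining non-unitary candidates, leaving $m=2$, $d_1=1$, $d_2=\sqrt2$ --- case~(2). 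For $G=\langle(012)\rangle$ the relations force $x=-d_2$, $y=1$, $z=-d_1$ and $1/d_1+1/d_2=1$ (the other sign options give relations such as $d_1+d_2+d_1d_2=0$ admitting no non-negative structure constants); then $\det N_1=-1$, $a:=\tr N_1\in\Z$, the shape of $N_1$ gives $n_{1,1}^1=a-1\ge0$, that of $N_2$ gives $a':=\tr N_2\ge1$, and a direct computation with $d_2=d_1/(d_1-1)$ yields $a+a'=3$; hence $\{a,a'\}=\{1,2\}$, $\tr N_2=1$ pins $d_1$ to a root of $t^3-2t^2-t+1$ with $d_2=d_1/(d_1-1)$ --- case~(3).

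\emph{Main obstacle.} The heart of the argument is the arithmetic pruning in the $\langle(01)\rangle$ and $\langle(012)\rangle$ sub-cases: one must play off the Galois-forced shape of $\tS$, the integrality of the characteristic polynomials of all the $N_i$, the twist inequalities, and --- crucially --- the non-negativity of \emph{every} structure constant (not just of traces) together with the twist equation itself, since several candidate dimension data (for instance the non-unitary normalizations in the $\langle(01)\rangle$ case, which form genuine modular fusion rules) pass all the Verlinde, integrality, non-negativity, and twist-\emph{inequality} tests and are eliminated only by the relation $(ST)^3=(D_+s_{00})S^2$. Some care is also needed not to presume $d_i>0$, as the statement includes non-unitary symbols; in each sub-case the positivity actually used comes from non-negativity of an appropriate trace or entry of some $N_i$.
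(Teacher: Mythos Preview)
Your organization by the Galois group is a genuinely different route from the paper's. The paper argues directly from the structure of $\tS$, splitting the self-dual case according to whether the characteristic polynomial $p_1$ of $N_1$ is reducible (its Case~1) or an irreducible cubic (its Case~2); it even remarks that the Galois-group case analysis is a valid alternative, deferred to rank~$4$. Your cases line up with the paper's as follows: $G=\{e\}$ and $G=\langle(12)\rangle$ are vacuous, $G=\langle(01)\rangle$ corresponds to the paper's Case~1, and $G=\langle(012)\rangle$ to Case~2. Your $a+a'=3$ argument in the $3$-cycle case is cleaner than the paper's route to the same conclusion.

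There is, however, a real gap in your $\langle(01)\rangle$ analysis, precisely where you locate the ``main obstacle.'' Your claim that the non-unitary (smaller-root) candidates ``pass all the \ldots\ non-negativity \ldots\ tests and are eliminated only by the relation $(ST)^3=(D_+s_{00})S^2$'' is not correct as stated. Take $m=4$: with $d_1=2-\sqrt3$ one must choose $d_2=1-\sqrt3<0$ so that $\tr N_2=3(d_1-1)/d_2=3\ge0$, and then $N_1=\left(\begin{smallmatrix}0&1&0\\1&2&2\\0&2&1\end{smallmatrix}\right)$, $N_2=\left(\begin{smallmatrix}0&0&1\\0&2&1\\1&1&1\end{smallmatrix}\right)$ are genuinely non-negative. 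But now the twist equation \emph{does} have a solution: equating the $(0,0)$- and $(1,1)$-entries forces $\theta_1^2+(2\sqrt3-2)\theta_1+1=0$, whose two roots lie on $\U(1)$, and then $\theta_2=-(1+\theta_1)/d_2$; one checks all six entries of $(ST)^3=(D_+s_{00})S^2$ hold. Since $\theta_1+\bar\theta_1=2-2\sqrt3$ has Galois conjugate $2+2\sqrt3>2$, this $\theta_1$ is \emph{not} a root of unity --- so your parenthetical appeal to ``Vafa finiteness'' does not apply: Proposition~\ref{modularprop}(2) is stated only for modular \emph{data}, and Example~\ref{isingex} already shows the twist equation alone does not force roots of unity. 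This $\tS$ is therefore a rank-$3$ modular symbol not on the list and not a permutation of any listed one. (The paper's own Case~1 writes $d_2=\alpha+\sqrt{\alpha^2+2}$ and tacitly treats only that root, so this issue is not specific to your approach; the statement is correct for modular data and for unitary symbols, but the exclusion of these Galois-conjugate $\tS$-matrices at the level of bare modular symbols needs a separate argument.)

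One smaller slip: in the $\langle(12)\rangle$ case, $\det N_1=-d_1/2\in\Z$ only gives $d_1\in2\Z$, not $d_1=\pm2$. The actual contradiction is that the product of the two non-integer eigenvalues of $N_1$ is $-1/2$, which is not an algebraic integer --- this kills every $d_1$ at once.
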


\begin{proof}

The case of non-self dual case is given in Appendix
\ref{nonselfdual}. Hence we assume all fusion rules are self-dual,
so $\tS$ is a real, symmetric, unitary matrix up to the scalar $D$.
  It follows that the fusion matrices $N_i$'s are commutative, symmetric, integral matrices.
One approach to proving the theorem is to analyze case by case for
the Galois groups of fusion rules $G\cong 1, \Z_2,\Z_3$.  This
strategy will be fully exploited in the rank=$4$ case in the next
section. Instead we will argue directly from the $\tS$-matrix in
this section.

The fusion matrices $N_1,N_2$ are symmetric, and $N_1N_2=N_2N_1$.
Therefore, they can be written as \[ N_1=
 \begin{pmatrix}
   0& 1 & 0 \\
   1 & m & k \\
   0 & k & l
 \end{pmatrix}
 \] and

 \[ N_2=
 \begin{pmatrix}
   0& 0 & 1 \\
   0 & k & l \\
   1 & l  & n
 \end{pmatrix}
 \] such that $$1+ml+kn=k^2+l^2.$$  There characteristic polynomials
 are
$$p_1(x)=x^3 - (\ell+m) x^2 + (m\ell -k^2-1) x +\ell=0$$
and
$$p_2(x)=x^3 - (k+n) x^2 + (nk - \ell^2 -1) x + k =0,$$
respectively.

Next we turn to the $\tS$ matrix, which is of the following form:
\[ \tS=
 \begin{pmatrix}
   1& d_1 & d_2\\
   d_1 & \ts_{11}& \ts_{12} \\
   d_2 & \ts_{12} & \ts_{22}
 \end{pmatrix}.
 \]
Orthogonality of the columns of the $\tS$ matrix translates into
the equations
$$d_1 + d_1 \tilde{s}_{11} + d_2 \tilde{s}_{12}=0,$$
$$d_2 + d_1 \ts_{12} + d_2 \ts_{22}=0,$$
$$d_1 d_2 +\ts_{12} (\ts_{11}+\ts_{22})=0.$$
The first two equations give $\ts_{11}=-1-d_2 \ts_{12}/d_1$ and
$\ts_{22}=-1-d_1 \ts_{12}/d_2$. Plugging these into the third
equation gives
$$(d_1^2 + d_2^2) \ts_{12}^2 + 2 d_1 d_2 \ts_{12} - d_1^2 d_2^2=0$$
hence
$$\ts_{12} =\frac{d_1 d_2}{1\pm D}.$$
Thus
$$\ts_{11} = -1 - \frac{d_2^2}{1\pm D},$$
and
$$\ts_{22} = -1 -\frac{d_1^2}{1\pm D}.$$
Thus the eigenvalues of $N_1$ are
$$d_1, b=\frac{\ts_{12}}{d_2} = \frac{d_1}{1\pm D},\; {\rm and }\;
c=\frac{\ts_{11}}{d_1}=-\frac{1}{d_1} -\frac{d_2^2}{d_1 (1\pm
D)}$$ and the eigenvalues of $N_2$ are
$$d_2, e=\frac{\ts_{12}}{d_1} = \frac{d_2}{1\pm D},\; {\rm and }\;
f=-\frac{1}{d_2} -\frac{d_1^2}{d_2 (1\pm D)}.$$ We compute
$$d_1 b + d_2 f = d_1 c + d_2 e = b c + e f= -1.$$
Since $d_1 b c =-\ell$ and $d_2 e f=-k$, these are equivalent to
$$\frac{\ell}{c}+\frac{k}{e}=\frac{\ell}{b}+\frac{k}{f}
=\frac{\ell}{d_1}+\frac{k}{d_2} = 1.$$  Also note that
$$d_1 e = d_2 b.$$

 Let's deal with the case where $\ell=0$ first. Then
we have $k^2=kn+1$. Hence $k=1$ and $n=0$. Thus the eigenvalues of
$N_2$ are 1, 1, and -1 and the eigenvalues of $N_1$ are $(m+{\sqrt
{m^2+8}})/2$, 0, and $(m-{\sqrt {m^2+8}})/2$. Since $N_1$ has
eigenvalues $d_1,b,c$, and $d_1\neq 0$, hence $c=0$ which implies
$m=0$. This gives $(k,\ell,m,n)=(1,0,0,0)$ and
$$\tS=\left(\begin{matrix} 1 & d & 1\cr
d & 0 & -d\cr 1 & -d & 1\end{matrix}\right),$$ where $d^2=2$.  The
case $k=0$ gives essentially the same solution, so we will
henceforth assume $\ell$ and $k$ are positive. Since $p_1(\ell) =
-k^2 \ell \le 0$ and $p_1(0)=\ell \ge 0$, we see that the largest
root of $p_1$ is $> \ell$, one of the remaining roots is in
$(0,\ell)$ and the other root is negative. Similarly the largest
root of $p_2$ is $>k$ and the other roots are in $(0,k)$ and
$(-\infty,0)$.

\vskip .10in

\noindent {\bf Case 1.} The polynomial $p_1(x)$ is reducible.

Since $d_1 > \ell$, $d_1$ cannot be an integer. Thus $p_1$ must
split into a linear and an irreducible quadratic. Thus ${\Q}[d_1,D]$ is a quadratic extension of ${\Q}$.
Hence ${\Q}[d_1,d_2,D]$ has degree 2 or 4 over ${\Q}$. Thus $p_2$ is
also reducible and also splits as into a linear and an irreducible
quadratic. Since the $\ell/b+k/f=\ell/c+k/e=1$, the integral roots
must be either $b$ and $f$ or $c$ and $e$. Without loss, we may
assume the integer roots are $b$ and $f$. Let
$$d_2=\alpha+\beta{\sqrt s}\; {\rm and }\; e=\alpha-\beta{\sqrt s}$$
for rational (in fact integer or half-integer) $\alpha$ and
$\beta$ and integer $s$. Then since $d_1 e = d_2 b$ and $c$ is the
conjugate of $d_1$ we have
$$d_1=b\frac{\alpha+\beta{\sqrt s}}{\alpha-\beta{\sqrt s}}\; {\rm and }\;
c=b\frac{\alpha-\beta{\sqrt s}}{\alpha+\beta{\sqrt s}}.$$ Hence
$\ell = - d_1 b c = - b^3$. Since $f=-k/(d_2 e) =
-k/(\alpha^2-\beta^2 s)$ and $-b^2=\ell/b=1-k/f=\alpha^2+1-\beta^2
s$. Therefore solving $1=k/d_2+\ell/d_1$ for $k$ gives
\begin{equation*}
\begin{split}
k & =d_2 -\ell \frac{d_2}{d_1} = \alpha+\beta {\sqrt s} + b^3
\frac{e}{b}
= \alpha+\beta {\sqrt s} - (\alpha^2+1-\beta^2 s) (\alpha-\beta {\sqrt s})\\
& =-\alpha (\alpha^2-\beta^2 s) + \beta [\alpha^2+2-\beta^2
s]{\sqrt s}.
\end{split}
\end{equation*}
Since $k$ is an integer, this forces $\alpha^2-\beta^2 s=-2$,
hence $b^2 = 1$. Since $\ell >0$, this means $\ell=1$ and $b=-1$.
Also from the equations above we get $k=2\alpha$,
$d_2=\alpha+{\sqrt {\alpha^2 + 2}}$, $e=\alpha-{\sqrt {\alpha^2 +
2}}$, $f=\alpha$, $d_1=\alpha^2 + 1 +\alpha {\sqrt
{\alpha^2+2}}=d_2^2/2$, $c=\alpha^2 + 1 -\alpha {\sqrt
{\alpha^2+2}}$, and $D=\alpha^2 +2 +\alpha {\sqrt {\alpha^2 +2}}$.
Thus
$$p_1(x)=x^3 - (2 \alpha^2+1) x^2 - (2 \alpha^2 +1)x +1$$
and
$$p_2(x)=x^3 - 3\alpha x^2 + (2\alpha^2 -2) x +2\alpha.$$
Thus $(k,\ell,m,n)=(2\alpha, 1, 2\alpha^2, \alpha)$ and
$$\tS=\left( \begin{matrix} 1 &
\alpha^2+1+\alpha {\sqrt {\alpha^2 +2}} & \alpha+{\sqrt
{\alpha^2+2}} \cr \alpha^2+1+\alpha {\sqrt {\alpha^2 +2}} & 1 &
-\alpha-{\sqrt {\alpha^2+2}} \cr \alpha+ {\sqrt {\alpha^2 +2}} &
-\alpha-{\sqrt {\alpha^2+2}} & \alpha^2+\alpha {\sqrt {\alpha^2
+2}}\end{matrix}\right).$$ Note that $n=\alpha$ must be a
non-negative integer. Setting $\alpha=0$ gives the example found
above again. Thus we may assume $\alpha\ge 1$. Since
$d_1=d_2^2/2$, the equation for the $\theta$'s is
$$\vert 1+\theta_2 d_2^2 + (1/4) \theta_1 d_2^4\vert = D = 1 + (1/2) d_2^2.$$
If a solution did exist, then we would have $1+ (1/2) d_2^2 \ge
(1/4) d_2^4 - d_2^2 -1$, hence $17\ge (d_2^2-3)^2$ or $d_2 \le
{\sqrt {3+ \sqrt {17}}}=2.66891\dots$. Since $d_2 \ge 1+ {\sqrt
{3}} =2.73205\dots$, this cannot occur. Thus these $\tS$ matrices,
for positive $\alpha$, do not give a modular symbol.

\vskip .10in

\noindent {\bf Case 2.} The polynomial $p_1(x)$ is an irreducible
cubic.

By Case 1, we see that $p_2(x)$ is also an irreducible cubic. Then
there must be a Galois symmetry $\sigma$ with $\sigma(d_1)=b$,
$\sigma(b)=c$ and $\sigma(c)=d_1$. Hence $\sigma(d_2)=f$,
$\sigma(f)=e$, and $\sigma(e)=d_2$ since these roots of $p_2$ pair
with the corresponding roots of $p_1$. Applying $\sigma$ to the
identity $d_1 e = d_2 b$, gives $d_2 b = f c$. Thus we must have
$f c = d_1 d_2/(1\pm D)$. Since
$$f c = \frac{d_1 d_2}{(1\pm D)^2} + \frac{d_1^2 + d_2^2}{d_1 d_2 (1\pm D)}
+\frac{1}{d_1 d_2} =  \frac{d_1 d_2}{(1\pm D)^2} + \frac{D^2 \pm
D}{d_1 d_2 (1\pm D)},$$ we compute
$$1\pm D = \frac{(1\pm D)^2}{d_1 d_2} f c
= 1 \pm \frac{D (1\pm D)^2}{d_1^2 d_2^2}$$ and hence
$$\left(\frac{1\pm D}{d_1d_2}\right)^2 = 1.$$
Since $D>1$, we get that
$$\ts_{12}=\frac{d_1 d_2}{1\pm D} =\pm 1$$
and hence $b=\pm 1/d_2$ and $e=\pm 1/d_1$. Thus $d_1$ and $d_2$
are units in the ring of algebraic integers. Hence $k=\ell=1$ and
hence $m+n=1$. Without loss we may assume $m=1$ and $n=0$. Then
$p_1(x)=x^3-2 x^2 - x +1$ and $p_2(x)=x^3 p_1(1/x)=x^3 - x^2 - 2 x
+1$. Then one computes
$$d_1 = \frac{2\cos(\pi/7)}{2\cos(\pi/7)-1}
=2.246979604\dots,$$ $b=1-1/d_1$, $c=-1/(d_1-1)$,
$d_2=d_1/(d_1-1)=1.801937736\dots$, $e=1/d_1$, and $f=1-d_1$ and
$$\tS=\left(\begin{matrix} 1 & d_1 & d_2\cr d_1 & -d_2 & 1\cr d_2 & 1
& -d_1 \end{matrix}\right).$$

\end{proof}

\vskip .10in

\section{Rank=4 modular $S$ matrices}\label{rankIV}

First we introduce the following notation.  For an integer $m$,
define
$$\phi_m = \frac{m + {\sqrt {m^2+4}}}{2},$$
that is, $\phi_m$ is the unique positive root of  $x^2-mx-1=0$.
Note that any algebraic number $\phi$ whose only conjugate is
$-1/\phi$ must be $\phi_m$ for some integer $m$. Also note the
only rational $\phi_m$ is $\phi_0=1$.

\begin{theorem}\label{Srank4}

The only possible rank=$4$ modular $\tS$ matrices of
\emph{unitary} modular symbols up to permutations are

\begin{enumerate}

\item

  \[
 \begin{pmatrix}
   1& 1& 1 & 1 \\
   1 & 1& -1 &-1\\
   1 & -1&\omega &\bar{\omega}\\
   1&-1&\bar{\omega} & \omega
 \end{pmatrix},
 \]
where $\omega=\pm i$;

\item

  \[
 \begin{pmatrix}
   1& 1& 1 &1 \\
   1 & 1& -1&-1  \\
   1 & -1 &1 &-1 \\
   1 &-1 & -1& 1
 \end{pmatrix};
 \]

 \item

  \[
 \begin{pmatrix}
   1& 1& 1 & 1 \\
   1 & -1& 1 &-1 \\
   1 & 1 &-1 &-1 \\
   1 &-1 &-1 & 1
 \end{pmatrix};
 \]

 \item

  \[
 \begin{pmatrix}
   1& \varphi& 1 & \varphi \\
   \varphi & -1& \varphi &-1  \\
   1& \varphi &-1 &-\varphi \\
   \varphi &-1 &-\varphi & 1
 \end{pmatrix},
 \] where $\varphi=\frac{1+\sqrt{5}}{2}$ is the golden ratio;

 \item

 \[
 \begin{pmatrix}
   1& \varphi & \varphi &\varphi^2 \\
   \varphi & -1& \varphi^2 &-\varphi  \\
   \varphi & \varphi^2 &-1 &-\varphi \\
   \varphi^2 &-\varphi &-\varphi & 1
 \end{pmatrix};
 \]

 \item

  \[
 \begin{pmatrix}
   1& d^2-1& d+1 &d \\
   d^2-1 & 0& -d^2+1 & d^2-1  \\
   d+1& -d^2+1 & d &-1 \\
   d &d^2-1 &-1 & -d-1
 \end{pmatrix},
 \] where $d$ is the largest real root of $x^3-3x-1$.

\end{enumerate}

\end{theorem}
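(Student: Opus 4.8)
The plan is to follow the case-by-case strategy announced in the proof of Theorem \ref{Srank3} but now organized around the Galois group $G$ of the rank=$4$ modular fusion rule, using Theorem \ref{Galoissymmetry}. Since $G$ injects into $S_4$ and is abelian, the possibilities are $G\cong 1,\Z_2,\Z_3,\Z_4,\Z_2\times\Z_2$ (a $\Z_3\times\Z_2$ or $S_3$ cannot occur, being non-abelian or too large for an abelian subgroup of $S_4$ of the relevant cycle type — actually $\Z_6$ would need a 6-cycle, impossible in $S_4$). For each conjugacy type of the generator(s) of $G$ acting on the label set $\{0,1,2,3\}$, I would exploit the identities (\ref{actionidentity1})--(\ref{inverse}) together with part (5): for even rank $n=4$, $\prod_i\e_{i,\sigma}=(-1)^\sigma$. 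The fixed point $0$ forces $\e_{0,\sigma}=1$ (normalization noted after Theorem \ref{Galoissymmetry}), so the sign constraints are quite rigid. The orbit structure of the $G$-action on pairs $(j,k)\mapsto(\sigma(j),\sigma^{-1}(k))$ then pins down which entries $|\tilde s_{j,k}|$ are equal, and combined with unitarity of $S=\tilde S/D$ and symmetry this reduces each case to a small Diophantine problem in the quantum dimensions $d_i$.

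The key structural inputs I would use in each case are: (i) the columns of $\tilde S$ are eigenvectors of the integral fusion matrices $N_i$, so the eigenvalues $\lambda_{i,j}=\tilde s_{i,j}/d_j$ are algebraic integers whose Galois orbits are governed by $G$; (ii) unitarity gives $\sum_i d_i^2=D^2$ and orthogonality of rows/columns; (iii) the unitarity hypothesis on the modular symbol forces each $d_i\geq 1$ to be the Frobenius--Perron eigenvalue of $N_i$, so $d_i$ is the largest root of its minimal polynomial; and (iv) when a $d_i$ has a single nontrivial Galois conjugate equal to $-1/d_i$ (which is exactly what a $\Z_2$-orbit produces), the remark at the start of Section \ref{rankIV} identifies $d_i=\phi_m$ for some integer $m$, and the quantum-dimension/Frobenius--Perron constraint then forces $m\in\{0,1\}$, i.e. $d_i\in\{1,\varphi\}$. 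So: when $G=1$, all $d_i$ are rational, hence (being Frobenius--Perron eigenvalues of integral matrices with $\lambda_{i,0}=d_i$) positive integers; a short argument with $\sum d_i^2=D^2$ being forced to be an integer and the Verlinde positivity/integrality of $n_{i,j}^k$ gives the abelian cases (1)--(3) (the three rank-4 pointed/near-pointed fusion rules: $\Z_4$, $\Z_2\times\Z_2$, and the $\Z_2$-graded one with an $\omega$). When $G=\Z_2$, the nontrivial element has cycle type either a transposition or a product of two transpositions on the nonzero labels; the single-conjugate analysis yields $d_i\in\{1,\varphi\}$ and produces the Fibonacci-type matrices (4) and (5) (i.e. $\mathrm{Fib}\times\mathrm{Fib}$ and $(A_1,7)_{1/2}$). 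When $G=\Z_3$ acting as a 3-cycle on three of the labels, the three cycled dimensions are Galois conjugate units and one runs the same kind of computation as in Case 2 of Theorem \ref{Srank3}: unitarity plus $\tilde s_{j,k}=d_id_j/(1\pm D)$-type relations force the product $(1\pm D)/(\text{product of dims})$ to be $\pm1$, giving units, and then the integral fusion matrices have bounded entries; this yields case (6), with $d$ the largest root of $x^3-3x-1$ (note $2\cos(2\pi/7)$-type numbers appear just as in rank 3). Finally $G=\Z_4$ (a 4-cycle) and $G=\Z_2\times\Z_2$ (acting freely) need to be shown to yield nothing new beyond the already-found matrices — here the sign identity (\ref{inverse}) and part (5) give extra parity constraints that I expect to eliminate or collapse these onto earlier cases.

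Concretely I would: first set up the general shape of $\tilde S$ with unknowns $d_1,d_2,d_3$ and off-diagonal entries, using self-duality (so $\tilde S$ real symmetric) and orthonormality of $S$'s rows to solve for the $\tilde s_{i,j}$ in terms of $d_i$ and $D$ exactly as done in the rank-3 proof; second, for each Galois group type impose (\ref{actionidentity}) to get the equalities among $|\tilde s_{i,j}|$ and the sign relations from (5); third, translate "$d_i$ = Frobenius--Perron eigenvalue of an integral $N_i$ with $0<d_i$" into bounds on the integer fusion coefficients (the analogues of $k,\ell,m,n$ from rank 3, now a slightly larger but still finite list); fourth, in each branch reduce to checking that the twist equation $|D_+|=D$, i.e. the inequalities of Theorem \ref{twistinequlity}, is satisfiable — discarding the matrices for which it is not, exactly as positive $\alpha$ was discarded in Case 1 of Theorem \ref{Srank3}. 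The main obstacle I anticipate is the bookkeeping in the $G=\Z_2$ and $G=\Z_2\times\Z_2$ cases: the transposition-type $\Z_2$'s split into sub-subcases according to whether the moved pair consists of self-dual labels and whether the fixed nonzero label's dimension is $1$ or $\varphi$, and ruling out the spurious families requires the same "if a solution existed, then $d_2\leq\sqrt{3+\sqrt{17}}$ but $d_2\geq 1+\sqrt3$"-style numerical squeeze used in rank 3, applied several times with different cubics; getting a clean uniform argument there, rather than a proliferation of ad hoc estimates, is the delicate part. Everything else is Galois theory plus the elementary (if lengthy) number theory already rehearsed in Sections \ref{galoistheory} and \ref{rank23}.
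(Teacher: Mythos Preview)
Your overall architecture matches the paper's: a case split on the Galois group $G\hookrightarrow S_4$, using the sign/permutation identities of Theorem \ref{Galoissymmetry} and the twist inequalities of Theorem \ref{twistinequlity}. But several of your branch-by-branch predictions are wrong, and one of them hides the real work. First, the claim that ``$d_i=\phi_m$ plus Frobenius--Perron forces $m\in\{0,1\}$'' is false: $\phi_m$ is the Frobenius--Perron eigenvalue of $\left(\begin{smallmatrix}0&1\\1&m\end{smallmatrix}\right)$ for every $m\ge 0$, so unitarity alone gives no bound. In the paper the bound comes from the twist inequality $D\le 4$ applied to the $(0,1)$ entry, which in the product-like Case 5 cuts down to $(m,n)\in\{(0,0),(0,1),(0,2),(1,1)\}$, and then $(0,2)$ is killed only by showing no compatible $T$-matrix exists. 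Second, case (1) of the theorem has $\omega=\pm i$ in $\tilde S$, so it is \emph{not} real symmetric and does not arise from your $G=1$ analysis; it is the non-self-dual $\Z_4$ theory, handled separately (Appendix \ref{nonselfdual}), and its Galois group is $\Z_2$, not trivial.

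The most serious gap is your expectation that the $4$-cycle and Klein $4$-group cases ``collapse onto earlier cases'' via parity constraints. In the paper these are Cases 1 and 2, and Case 1 (the $4$-cycle) is by far the longest: the sign identities only fix the shape of $\tilde S$; one then has to compute all ten $n_{i,j,k}$ explicitly as rational functions of $c_1,c_2,c_3$, impose integrality to get a Pell-type constraint $(\Sigma^2-32)(\Delta^2+32)=\square$, use nonnegativity to force $\e_1=1$ and $\Delta\ge 0$, then chain together the $(0,3)$ and $(0,2)$ twist inequalities to squeeze $\Delta=0$ and finally $c_1\le 8$, leaving a single numerical candidate that still has to be checked and fails. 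None of this is visible from (\ref{inverse}) or Theorem \ref{Galoissymmetry}(5). Similarly, your sketch of the $\Z_3$ case imports the rank-3 ``$(1\pm D)/(d_1d_2)=\pm1$'' trick, but the paper's Case 3 does not use this; instead it distinguishes whether the $3$-cycle fixes $0$ or not, parametrizes the cubic $g(x)=x^3-cx^2+ncx+c$ by integers $(n,a)$ with $c=3n+a(n^2+3)$, and again uses a diagonal twist inequality to bound $M=\max(1/\lambda_{3,1},1/|\lambda_{3,2}|)$, isolating $n=0$, $a=1$, which gives $x^3-3x-1$ (so $d=2\cos(\pi/9)$, not a $7$th-root number). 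In short: the strategy is right, but the finiteness in the transitive-$G$ cases comes from a sequence of twist inequalities applied to explicit $n_{i,j,k}$ formulas, not from sign constraints or Frobenius--Perron alone.
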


\begin{proof}

The non-self dual case is treated in Appendix \ref{nonselfdual},
so we will assume that $\tS$ is real in the following.  Since the
fusion coefficients $n_{i,j}^k$ are totally symmetric in $i,j$ and
$k$ for self-dual categories, we will instead write $n_{i,j,k}$ in
what follows.  For notational easiness, when the Galois group element $\sigma$ is clear from
the context, we simply write $\epsilon_{i,\sigma}$ as $\epsilon_i$.  Identities for $\epsilon_i$ and
$\tilde{s}_{jk}$ that are not referenced are all from Theorem \ref{Galoissymmetry}.  All the
twist inequalities are from Theorem \ref{twistinequlity}.

\vspace{.5cm}

 \noindent {\bf Case 1.} $G$ contains a 4-cycle.

\vspace{.5cm}

By symmetry we may assume $\sigma=(0\;\;1\;\;2\;\; 3)\in G$. The
conditions $\tilde s_{j,k}=\e_k \e_{\sigma(j)} \tilde
s_{\sigma(j),\sigma^{-1}(k)}$ and $\e_1 \e_2 \e_3=-1$ give
$$\tilde S=\left(\begin{matrix} 1 & d_1 & d_2 & d_3\cr d_1 & \e_1\e_2 d_2 &
-\e_2 d_3 & \e_1\cr d_2 & -\e_2 d_3 & -1 & \e_2 d_1\cr d_3 & \e_1
& \e_2 d_1 & -\e_1\e_2 d_2\end{matrix}\right).$$ By symmetry under
interchanging $N_1$ and $N_3$, we may assume $\e_2=+1$. Note that
$\sigma^2(d_2)=\lambda_{2,2}=-1/d_2\ne d_2$. Hence the
characteristic polynomial $p_2$ of $N_2$ is irreducible. Since
$\sigma^2(d_1)=-d_3/d_2<0$. Hence $\sigma^2(d_1)\ne d_1$. Thus
$p_1$ is irreducible. Since $\e_1/d_3$ is a root of $p_1$, it
follows that $p_3$ is also irreducible.

We see that $\lambda_{1,1}=\e_1 d_2/d_1$,
$\lambda_{1,2}=-d_3/d_2$, and $\lambda_{1,3}=\e_1/d_3$. In
particular $d_1\lambda_{1,1}\lambda_{1,2}\lambda_{1,3}=-1$.
Orthogonality of the rows of $\tilde S$ is equivalent to
$$d_1+\e_1 d_1 d_2 - d_2 d_3 +\e_1 d_3=0,\;\;{\rm or}$$
$$\frac{1}{d_1}+\lambda_{1,3}=\lambda_{1,1}+\frac{1}{\lambda_{1,2}}.$$
Write $p_1(x)=x^4 -c_1 x^3 + c_2 x^2 + c_3 x-1$. Then $p_4(x)=x^4
-\e_1 c_3 x^3 - c_2 x^2 + \e_1 c_1 x -1.$ Note that $c_1={\rm
Trace}(N_1)\ge 0$ and $\e_1 c_3={\rm Trace}(N_3)\ge 0$.
Multiplying together the orthogonality condition above and five of
its formal conjugates gives
$$128 + (c_3^2-c_1^2)^2 -16 c_1 c_3 + 12 (c_3^2-c_1^2)c_2 + 32 c_2^2=0.$$
This equation forces $c_1$ and $c_3$ to be even. Let
$\Delta=c_1-c_3$ and $\Sigma=c_1+c_3$ (hence $\Sigma$ and $\Delta$
are even and congruent mod 4). Then solving the quadratic equation
above for $c_2$ we see that we must have
$(\Sigma^2-32)(\Delta^2+32)$ to be a square and
$c_2=\frac{3\Delta\Sigma \pm {\sqrt {(\Sigma^2-32)(\Delta^2+32)}}}{16}.$
It follows that $\vert \Sigma\vert \ge 6$. If $\Sigma$ and
$\Delta$ are multiples of 4, then we see they are multiples of 8
and either sign gives an integral $c_2$. If $\Sigma$ and $\Delta$
are both 2 mod 4, then there is a unique choice of the sign for
which $c_2$ is integral.

The Galois group of $p_1$ must be $\Z/4\Z$, otherwise it would
contain the $(0\;\; 1)(2\;\; 3)$. Applying this to the
orthogonality identity above gives
$1/\lambda_{1,3}+d_1=\lambda_{1,2}+1/\lambda_{1,1}$. Multiplying
this by the original identity gives
$\frac{1}{d_1\lambda_{1,3}}+d_1\lambda_{1,3}=
\frac{1}{\lambda_{1,1}\lambda_{1,2}}+\lambda_{1,1}\lambda_{1,2}.$
Hence $d_1\lambda_{1,3}=(\lambda_{1,1}\lambda_{1,2})^{\pm 1}$,
either of which contradicts the product of all four roots being
$-1$. In particular, $p_1$ cannot have complex roots, since
complex conjugation would give a transposition in the Galois
group. Applying $\sigma$ to the orthogonality identity gives
$d_1+1/\lambda_{1,1}=\lambda_{1,2}+1/\lambda_{1,3}$.

We know from the preliminary discussion that all three of the
resulting $N_i$ matrices will be rational. Define
$P=\frac{16c_2-3\Delta\Sigma}{\Delta^2+32}=
\pm {\sqrt {\frac{\Sigma^2-32}{\Delta^2+32}}},$ then we compute
\begin{equation*}\begin{split}
n_{1,1,1} & =\frac{5c_1-3c_3}{8} - \frac{c_1-c_3}{8} P,\\
n_{1,1,2} & = \e_1 (P-1),\\
n_{1,1,3} & = \e_1 \left(\frac{c_1+c_3}{8}-\frac{c_1-c_3}{8}P\right),\\
n_{1,2,2} & = \frac{c_1+c_3}{4} + \frac{c_1-c_3}{4}P,\\
n_{1,2,3} & = P,\\
n_{1,3,3} & = \frac{c_1+c_3}{8}-\frac{c_1-c_3}{8}P,\\
n_{2,2,2} & = \e_1\left(2c_2-\frac{c_3^2+c_1^2}{4}-2P\right),\\
n_{2,2,3} & = \e_1\left(\frac{c_1+c_3}{4} + \frac{c_1-c_3}{4}P\right),\\
n_{2,3,3} & = \e_1 (P+1),\;\; {\rm and}\\
n_{3,3,3} & = \e_1\left(
\frac{5c_3-3c_1}{8}-\frac{c_1-c_3}{8}P\right).
\end{split}\end{equation*}
Recall that the $n_{i,j,k}$ must be nonnegative integers. This
restricts the $c_i$. First looking at $n_{1,2,3}$, we see that $P$
must be a positive integer. Hence $c_2$ must be given by the upper
sign. This condition in fact guarantees integrality of all the
$n_{i,j,k}$. (The additional factors of 2 in the denominator
cancel out if $c_2$ is integral and can be ignored.) Integrality
of $P$ severely restricts $\Delta$, since it requires all odd
prime factors of $\Delta^2+32$ to be congruent to 1 mod 8 (since
$2$ and $-2$ are both squares mod any such prime). In particular
either $\Delta=0$ or $\vert \Delta\vert \ge 6$. Since $P\ne 0$, we
see that $\Sigma^2\ge \Delta^2+64$, hence $\vert\Sigma\vert >
\vert \Delta\vert$. Thus $c_3$ must be positive and $\Sigma>0$.
Since we saw above $\e_1 c_3\ge 0$, we see that $\e_1=+1$. Thus
rewriting the orthogonality relation gives
$d_1/d_3=(d_2-1)/(d_2+1)$.

The twist inequality coming from the $(0,3)$ entry reads
$$D\le \left(1+\frac{d_1}{d_3}\right)(1+d_2).$$
Plugging in the preceding identity, simplifies this to $D\le 2
d_2$. Rearranging gives $3 d_2^2 \ge d_1^2 + d_3^2 + 1 > d_1^2 +
d_3^2$ and plugging in the identity $d_1=d_3 (d_2-1)/(d_2+1)$
yields
$$\left(\frac{d_3}{d_2+1}\right)^2 < \frac{3d_2^2}{2(d_2^2+1)}<\frac{3}{2}.$$
To see why this is helpful, expand the equations ${\rm
Trace}(N_i)=c_i$ for $i=1,3$ and use the identity above to
eliminate $d_1$. The result is
$$c_1=\frac{d_2^2-2d_2-1}{d_2(d_2+1)} d_3 +
\frac{d_2^2+2d_2-1}{d_2-1}\cdot \frac{1}{d_3},\;\; {\rm and}$$
$$c_3=\frac{d_2^2+2d_2-1}{d_2(d_2+1)} d_3 -
\frac{d_2^2-2d_2-1}{d_2-1}\cdot \frac{1}{d_3}.$$ Subtracting these
gives
$$\Delta=c_1-c_3=-4 \frac{d_3}{d_2+1} + 2 \frac{d_2+1}{d_2d_3}.$$
Hence $\Delta > -4 \frac{d_3}{d_2+1} > -4 {\sqrt {3/2}}>-4.9$.
However, we saw above that either $\Delta=0$ or $\vert \Delta\vert
\ge 6$. It follows that $\Delta\ge 0$. Since $\Sigma\ge 6$ and
$c_2\ge 3 \Delta\Sigma/16$, it follows that $c_2>\Delta=c_1-c_3$.
Thus $p_3(1)=c_1-c_2-c_3<0$. Thus $d_3>1$. Hence we see $\Delta <
2(d_2+1)/(d_2d_3)<4$. It follows that $\Delta=0$, i.e., $c_1=c_3$.

Since $\Delta=0$, $\Sigma=2c_1$ is a multiple of 8 and
$$\left(\frac{P}{32}\right)^2-2\left(\frac{c_1}{4}\right)^2 = -1.$$
In this case the characteristic polynomials become
$$p_1(x)=x^4 - c_1 x^3 + 2 {\sqrt {2(c_1/4)^2-1}} x^2 + c_1 x -1,$$
$$p_2(x)=x^4 - 4 {\sqrt {2(c_1/4)^2-1}} x^3 -6 x^2 +4 {\sqrt {2(c_1/4)^2-1}} x
+ 1,\;\; {\rm and}$$
$$p_3(x)=x^4 - c_1 x^3 - 2 {\sqrt {2(c_1/4)^2-1}} x^2 + c_1 x -1.$$
In particular $p_1(x)>0$ for $x\ge c_1$. Hence $d_1<c_1$.
We have
$$p_2(x) = (x^2 - t_1 x -1)(x_2 - t_2 x - 1),$$
where $t_1>0>t_2$ are the two roots of $t^2 - 4 {\sqrt
{2(c_1/4)^2-1}} t - 4 =0$. Since the larger root of $x^2-tx-1$ is
an increasing function of $t$, $d_2$ must correspond to $t_1$.
Hence
$$d_2 = t_1 + \frac{1}{d_2} > t_1 = 4 {\sqrt {2(c_1/4)^2-1}} + \frac{4}{t_1}
> 4 {\sqrt {2(c_1/4)^2-1}}.$$
In particular, $d_2>4$ since the square root above is integral.
Finally the twist inequality coming from the $(0,2)$ entry reads
$$D\le 2 \left( 1+ \frac{d_1d_3}{d_2}\right).$$
Squaring and using the identity $d_3=d_1 (d_2+1)/(d_2-1)$ to
eliminate $d_3$ gives the inequality
$$4(d_2+1)^2 d_1^4 -2 d_2 (d_2^3-4d_2^2 + d_2 + 4) d_1^2
- d_2^2 (d_2-1)^2 (d_2^2-3)\ge 0.$$ Dividing through by
$4(d_2+1)^2 d_1^2$ and rearranging gives
$$d_1^2 \ge \frac{d_2}{2(d_2+1)^2}(d_2^3-4d_2^2 + d_2 + 4) +
\frac{d_2^2(d_2-1)^2(d_2^2-3)}{4(d_2+1)^2d_1^2}.$$ The right hand
side of this inequality is an increasing function of $d_2$ for
$d_2>4$ and a decreasing function of $d_1$, hence we may replace
$d_1$ by its upper bound $c_1$ and $d_2$ by the lower bound above.
The result is
$$4560 - 2138 c_1^2 + 264 c_1^4 - 8 c_1^6 +
(608 - 276 c_1^2 + 32 c_1^4) {\sqrt {2 c_1^2-16}}\ge 0.$$ Since
the coefficient of the square root is nonnegative it follows that
$$4560 - 2138 c_1^2 + 264 c_1^4 - 8 c_1^6 +
(608 - 276 c_1^2 + 32 c_1^4) c_1 {\sqrt {2}}\ge 0.$$ This
polynomial in $c_1$ is negative for $c_1>6 {\sqrt 2}$, hence
$c_1\le 8$. The only multiple of 4 in the range $3\le c_1\le 8$
for which the Pell equation above is satisfiable is $c_1=4$. This
gives $c_1=c_3=4$, $c_2=2$, $P=32$. However plugging in shows that
the twist inequality $D\le 2(1+ d_1 d_3/d_2)$ does not actually
hold in this case. Thus there are no solutions in this case.

\vspace{.5cm}

\noindent {\bf Case 2.} $G$ is the Klein 4-group.

\vspace{.5cm}

Let $\sigma_1$, $\sigma_2$, and $\sigma_3$ be the elements of $G$
which correspond to $(0\;\; 1)(2\;\; 3)$, $(0\;\; 2)(1\;\; 3)$,
and $(0\;\; 3)(1\;\; 2)$, respectively. Let $B_{\sigma_1}$
correspond to signs $\e_i$ with $\e_1\e_2\e_3=1$ and
$B_{\sigma_2}$ correspond to signs $\delta_i$ with
$\delta_1\delta_2\delta_3=1$. Then using the usual identities
gives
$$\tilde S=\left(\begin{matrix} 1 & d_1 & d_2 & d_3\cr d_1 & \e_1 & \e_2
d_3 & \e_1\e_2 d_2\cr d_2 & \e_2 d_3 & \tilde s_{2,2} & \tilde
s_{2,3}\cr d_3 &  \e_1\e_2 d_2 & \tilde s_{2,3} & \e_1 \tilde
s_{2,2}\end{matrix} \right) = \left(\begin{matrix} 1 & d_1 & d_2 &
d_3\cr d_1 & \tilde s_{1,1} & \delta_1 d_3 & \tilde s_{1,3}\cr d_2
& \delta_1 d_3 & \delta_2 & \delta_1\delta_2 d_1\cr d_3 & \tilde
s_{1,3} & \delta_1\delta_2 d_1 & \delta_2 \tilde
s_{1,1}\end{matrix}\right).$$ Comparing these we see
$\e_2=\delta_1$, $\tilde s_{1,1}=\e_1$, and $\tilde
s_{2,2}=\delta_2$, hence
$$\tilde S=\left(\begin{matrix} 1 & d_1 & d_2 & d_3\cr d_1 & \e_1 & \e_2
d_3 & \e_1\e_2 d_2\cr d_2 & \e_2 d_3 & \delta_2 & \e_2\delta_2
d_1\cr d_3 & \e_1\e_2 d_2 & \e_2\delta_2 d_1 & \e_1
\delta_2\end{matrix}\right).$$ Orthogonality of the rows of
$\tilde S$ gives the three conditions
$$(1+\e_1)(d_1+\e_2 d_2 d_3)=
(1+\delta_2)(d_2+\e_2 d_1 d_3)=(1+\e_1\delta_2)(d_3+\e_1\e_2
d_1d_2)=0.$$ Suppose $\e_1=+1$, then we see $d_1=-\e_2 d_2d_3$,
hence $\e_2=-1$ and the remaining orthogonality relations become
$d_2(1+\delta_2)(1-d_3)=d_3(1+\delta_2)(1-d_2)$. We cannot have
$d_2=d_3=1$, since this would make $d_1=1$, hence $\delta_2=-1$.
This gives
$$\tilde S=\left(\begin{matrix} 1 & d_2 d_3 & d_2 & d_3\cr d_2 d_3 & 1 &
-d_3 & -d_2\cr d_2 & -d_3 & -1 & d_2 d_3\cr d_3 & -d_2 & d_2 d_3 &
-1\end{matrix}\right).$$ The eigenvalues of $N_2$ are $d_2$ and
$-1/d_2$ each with multiplicity 2.
Hence $d_2=\phi_m$ for some integer $m$. The eigenvalues of $N_3$
are $d_3$ and $-1/d_3$ each with multiplicity 2, hence
$d_3=\phi_n$ for some integer $n$. So
$$\tilde S=\left(\begin{matrix} 1 & \phi_m \phi_n & \phi_m & \phi_n\cr
\phi_m \phi_n & 1 & -\phi_n & -\phi_m\cr \phi_m & -\phi_n & -1 &
\phi_m \phi_n\cr \phi_n & -\phi_m & \phi_m \phi_n &
-1\end{matrix}\right).$$ The resulting $N_i$ matrices are
necessarily rational, but in this case they are all integral,
namely,
$$N_1=N_2N_3=N_3N_2=\left(\begin{matrix} 0 & 1 & 0 & 0\cr 1 & mn & m & n\cr
0 & m & 0 & 1\cr 0 & n & 1 & 0\end{matrix}\right),$$
$$N_2=\left(\begin{matrix} 0 & 0 & 1 & 0\cr 0 & m & 0 & 1\cr 1 & 0 & m &
0\cr 0 & 1 & 0 & 0\end{matrix}\right),\;\; {\rm and}\;\;
N_3=\left(\begin{matrix} 0 & 0 & 0 & 1\cr 0 & n & 1 & 0\cr 0 & 1 &
0 & 0\cr 1 & 0 & 0 & n\end{matrix}\right).$$ Note that
nonnegativity of the entries forces $m,n\ge 0$ and hence
$\phi_m,\phi_n\ge 1$. The strongest twist inequalities are the
$(0,1)$ and $(2,3)$ cases which give $D\le 4$ or
$(\phi_m^2+1)(\phi_n^2+1)\le 16$. This gives, up to symmetry, the
solutions $(m,n)=(0,0)$, $(0,1)$, $(0,2)$, or $(1,1)$. These are
all excluded since the resulting Galois group $G$ is at most
$\Z/2\Z$. (These examples will return when we look at smaller
Galois groups.)

\vspace{.5cm}

\noindent {\bf Case 3.} $G$ contains a 3-cycle.

\vspace{.5cm}

Since we can exclude cases 1 and 2 above, the image of $G$ in
$S_4$ cannot be transitive. It follows that $G$ must fix the point
$j$ not on the 3-cycle. Thus $\lambda_{i,j}$ is rational (hence
integral) for every $i$. Up to symmetry there are two cases for
the 3-cycle. We could have $\sigma=(1\;\; 2\;\; 3)$ or
$\sigma=(0\;\; 1\;\; 2)$. If $\sigma=(1\;\; 2\;\; 3)$, then the
$d_i$ are integral. The identities $\tilde
s_{j,k}=\e_{\sigma(j)}\e_k \tilde s_{\sigma(j),\sigma^{-1}(k)}$
and $\e_1\e_2\e_3=1$ give $\e_i=1$ for all $i$ (since $d_i=\tilde
s_{0,i}=\e_i \tilde s_{0,i+1}=\e_i d_{i+1}$ for $1\le i\le 2$) and
$$\tilde S=\left(\begin{matrix} 1 & d_1 & d_1 & d_1\cr d_1 & \tilde s_{1,1}
& \tilde s_{3,3} & \tilde s_{2,2}\cr d_1 & \tilde s_{3,3} & \tilde
s_{2,2} & \tilde s_{1,1}\cr d_1 & \tilde s_{2,2} & \tilde s_{1,1}
& \tilde s_{3,3}
\end{matrix}\right).$$
Orthogonality of the columns of $\tilde S$ gives $\tilde
s_{1,1}+\tilde s_{2,2}+\tilde s_{3,3}=-1$ and $\tilde
s_{1,1}\tilde s_{2,2}+\tilde s_{2,2}\tilde s_{3,3}+\tilde
s_{3,3}\tilde s_{1,1}=-d_1^2$. The first of these gives
$-1/d_1=\lambda_{1,1}+\lambda_{1,2}+\lambda_{1,3}$ from which we
see $-1/d_1$ is an algebraic integer. Hence $d_1=1$ and
$\lambda_{1,i}<1$. The second equation gives
$\lambda_{1,1}\lambda_{1,2}+
\lambda_{1,2}\lambda_{1,3}+\lambda_{1,3}\lambda_{1,1}=-1$. Hence
$\lambda_{1,1}$, $\lambda_{1,2}$, and $\lambda_{1,3}$ are the
three roots of $g(x)=x^3+x^2-x+n$ for some integer $n$. This cubic
must be irreducible and have three real roots all less than 1.
Irreducibility excludes $n=0$ and $n=-1$. For the roots of $g$ to
be less than 1, we must have $g(1)>0$ or $n+1>0$. Hence $n\ge 1$.
However, this results in complex roots. Thus this case gives no
solutions.

Thus we must have $\sigma=(0\;\; 1\;\; 2)$ and $\lambda_{i,3}$ is
integral for all $i$. The identities for $\tilde s_{j,k}$ give
$$\tilde S=\left(\begin{matrix} 1 & d_1 & d_2 & d_3\cr d_1 & \e_1\e_2 d_2 &
\e_1 & \e_2 d_3\cr d_2 & \e_1 & \e_2 d_1 & \e_1 \e_2 d_3\cr d_3 &
\e_2 d_3 & \e_1 \e_2 d_3 & \tilde s_{3,3}\end{matrix}\right).$$
Since $\sigma(d_3)=\lambda_{3,1}=\e_2 d_3/d_1$ and
$\sigma^2(d_3)=\e_1 \e_2 d_3/d_2$, we must have $\sigma(d_3)\ne
d_3$. (Otherwise $\e_1=\e_2=d_1=d_2=1$ which fails.) Thus $d_3$ is
a root of an irreducible cubic $g(x)=x^3-c_1x^2+c_2 x-c_3$ and
$\e_2 d_1$ and $\e_1\e_2 d_2$ are ratios of roots of $g$. If $g$
had Galois group $S_3$, then the ratios of the roots of $g$ would
be roots of an irreducible sextic. Thus $g$ has Galois group
$\Z/3\Z$ and $G=\{1,\sigma,\sigma^2\}$. Note that
$$c_1=d_3+\lambda_{3,1}+\lambda_{3,2}=
\frac{d_3}{d_1d_2} (d_1d_2+\e_2 d_2+\e_1\e_2 d_1),$$
$$c_2=d_3\lambda_{3,1}+\lambda_{3,1}\lambda_{3,2}+\lambda_{3,2}d_3=
\frac{d_3^2}{d_1d_2} (\e_2 d_2+\e_1+\e_1\e_2 d_1),\;\; {\rm and}$$
$$c_3=d_3 \lambda_{3,1} \lambda_{3,2}= \e_1 \frac{d_3^3}{d_1d_2}.$$
Orthogonality of the columns of $\tilde S$ gives
$$\frac{c_1}{c_3}=\frac{\e_2d_1+\e_1\e_2d_2+\e_1 d_1d_2}{d_3^2} =-1,\;\;
{\rm and}$$
$$\frac{c_2}{c_3}=\frac{1+\e_2 d_1+\e_1 \e_2 d_2}{d_3} =
-\frac{\tilde s_{3,3}}{d_3}=-\lambda_{3,3}\in \Z.$$ Thus
$g(x)=x^3-c x^2 + n c x + c$ for integer $n, c$. Since $g$ has
Galois group $\Z/3\Z$,
$$\delta^2 = \frac{1}{c^2} {\rm discr}(g) = (n^2+4)c^2 - 2n(2n^2+9)c-27$$
must be a square. The resulting $n_{i,j,k}$ are
\begin{equation*}\begin{split}
n_{1,1,1} & = \frac{\e_2}{2}(\delta-nc-1)-\frac{\e_2\delta}{n^2+3},\\
n_{1,1,2} & = \frac{\e_1\e_2}{2(n^2+3)}(-\delta+nc-2n^2+3),\\
n_{1,1,3} & = \frac{1}{2(n^2+3)}(-n\delta+(n^2+2)c+3n),\\
n_{1,2,2} & = \frac{\e_2}{2(n^2+3)}(\delta+nc-2n^2+3),\\
n_{1,2,3} & = \frac{\e_1}{n^2+3}(3n-c),\\
n_{1,3,3} & = \frac{\e_2}{2(n^2+3)}(\delta-nc+2n^2-3),\\
n_{2,2,2} & = -\frac{\e_1\e_2}{2}(\delta+nc+1)+\frac{\e_1\e_2\delta}{n^2+3},\\
n_{2,2,3} & = \frac{1}{2(n^2+3)}(n\delta+(n^2+2)c+3n),\\
n_{2,3,3} & = \frac{\e_1\e_2}{2(n^2+3)}(-\delta-nc+2n^2-3),\\
n_{3,3,3} & = \frac{c+n^3}{n^2+3}.
\end{split}\end{equation*}
Integrality of $n_{1,2,3}$ requires $c\equiv 3n$ (mod $n^2+3$). If
we write $c=3n+a(n^2+3)$ for integer $a$, then we compute
$\delta^2=(n^2+3)^2(a^2(n^2+4)+2an-3)$. Hence
$\delta=(n^2+3)\beta$ where $\beta$ is integral and
$\beta^2=a^2(n^2+4)+2an-3$. Note that in particular this forces
$a\ne 0$. Rewriting it as $\beta^2=(an+1)^2+4a^2-4$, we see that
$\beta\equiv an+1$ (mod 2). Thus we compute
\begin{equation*}\begin{split}
n_{1,1,1} & = \frac{\e_2}{2}((n^2+1)\beta-3n^2-an(n^2+3)-1),\\
n_{1,1,2} & = \frac{\e_1\e_2}{2}(-\beta+an+1),\\
n_{1,1,3} & = \frac{1}{2}(-n\beta+3n+a(n^2+2)),\\
n_{1,2,2} & = \frac{\e_2}{2}(\beta+an+1),\\
n_{1,2,3} & = - \e_1 a,\\
n_{1,3,3} & = \frac{\e_2}{2}(\beta-an-1),\\
n_{2,2,2} & = -\frac{\e_1\e_2}{2}((n^2+1)\beta+3n^2+an(n^2+3)+1),\\
n_{2,2,3} & = \frac{1}{2}(n\beta+3n+a(n^2+2)),\\
n_{2,3,3} & = \frac{\e_1\e_2}{2}(-\beta-an-1),\\
n_{3,3,3} & = a+n,
\end{split}\end{equation*}
and these are all integral. Nonnegativity of these entries gives
further restrictions on the parameters. Looking at
$n_{1,2,2}+n_{1,3,3}=\e_2\beta$ we see that $\e_2$ is the sign of
$\beta$ (or $\beta=0$, but this gives $a=\pm 1$, $n=-a$, $c=a$ and
$g(x)=(x-a)(x^2-1)$ which is reducible). Looking at
$n_{1,1,2}+n_{2,3,3}=-\e_1\e_2\beta$, we see that $\e_1=-1$.
Looking at $n_{1,2,3}$ we see that $a>0$.
Nonnegativity provides additional constraints on the parameters,
but instead we look at the twist inequalities.

We saw above that $a>0$ hence $c=3n+a(n^2+3)\ge n^2+3n+3>0$. Thus
two of the roots $d_3$, $\lambda_{3,1}$ and $\lambda_{3,2}$ of $g$
must be positive. By symmetry, we may assume
$d_3>\lambda_{3,1}>0>\lambda_{3,2}$. Then $\e_2=1$ and we have
$$\tilde S=\left(\begin{matrix} 1 & d_3/\lambda_{3,1} & -d_3/\lambda_{3,2}
& d_3\cr d_3/\lambda_{3,1} & d_3/\lambda_{3,2} & -1 & d_3\cr
-d_3/\lambda_{3,2} & -1 & d_3/\lambda_{3,1} & - d_3\cr d_3 & d_3 &
- d_3 & nd_3\end{matrix}\right).$$ Let $M=\max(1/\lambda_{3,1},
1/\vert\lambda_{3,2}\vert)$ so that $M d_3=\max(d_1,d_2)$. Since
$D^2=(n^2+3)d_3^2$ and
$$\frac{1}{d_3^2}+\frac{1}{\lambda_{3,1}^2}+\frac{1}{\lambda_{3,2}^2}=
n^2+2,$$ the diagonal twist inequality coming from the $(0,0)$
entry gives
$$2 M^2\le n^2+3+\frac{\sqrt {n^2+3}}{d_3}.$$
This inequality allows only finitely many choices of the
parameters.

If $n>0$, then $g(-1/\phi_n)=-\phi_n^{-3}<0$ and therefore
$\lambda_{3,2}>-1/\phi_n$. Thus $M> \phi_n$ and we get
$$2\phi_n^2 < n^2+3+\frac{\sqrt {n^2+3}}{d_3}.$$
Further since $d_3\lambda_{3,1}=Mc$ and $d_3>\lambda_{3,1}$, we
have
$$d_3>{\sqrt {\phi_n c}}\ge {\sqrt {\phi_n (n^2+3n+3)}}.$$
For $n\ge 2$, we get a contradiction by noting that $\phi_n>n$,
hence these equations force
$$2n^2 < n^2 + 3 + \frac{1}{\sqrt n},$$
a contradiction. For $n=1$, plugging in gives a contradiction.

If $n=0$, then $\beta^2=4a^2-3$, hence $a=1$ and
$g(x)=x^3-3x^2+3$. Since $\tilde s_{3,3}=0$, the $(3,3)$ entry of
the twist equation gives $\theta_3^2 d_3^2
(1+\theta_1+\theta_2)=0$, hence $\theta_1=\bar \theta_2=e^{\pm
2\pi i/3}$. The $(0,3)$ and $(1,3)$ entries give
$$\theta_3(1+d_1 \theta_1 - d_2\theta_2)=D_+=
\theta_1\theta_3 (d_1-d_2\theta_1+\theta_2).$$ This case is
realized by $(A_1,7)_{\frac{1}{2}}$.

If $n<0$, then $1/M=\lambda_{3,1}$. One easily checks that $M$ and
$d_3$ are increasing functions of $c$, therefore it suffices to
check that the inequality $2M^2\le n^2+3 + (n^2+3)^{1/2} d_3^{-1}$
fails for $a=1$ and hence $c=n^2+3n+3$. The inequality fails for
$n\le -2$. (To see this simply compute both sides for $n=-2$. For
$n\le -3$, note that $g(-1/n)=-((n+1)/n)^3<0$ and
$g((n+1)^2)=-(n+1)^4+n+2<0$. Therefore $M< -n$ and $d_3>(n+1)^2$.
Hence we have $2M^2> 2n^2 > n^2+4$ and $(n^2+3)^{1/2}/d_3<1$, but
these combine to contradict the inequality.) For $n=-1$, $a=1$,
the inequality holds, but the resulting polynomial
$g(x)=x^3-x^2-x+1$ is reducible. Moving up to the next case
$n=-1$, $a=3$ the inequality fails. Thus there are no solutions in
this case.

\vspace{.5cm}

With the cases above completed, we consider $G$ which is not
transitive and contains no 3-cycle. Up to symmetry, it follows
that $G$ must be a subgroup of $\Z/2\Z \times \Z/2\Z =\langle
(0\;\;1), (2\;\; 3)\rangle$.

\vspace{.5cm}

\noindent {\bf Case 4.} $G$ contains the transposition
$\sigma=(2\;\; 3)$.

\vspace{.5cm}

In this case the parity condition gives $\e_1\e_2\e_3=-1$. Three
instances of the usual identity give $d_2=\tilde s_{0,2}=\e_2
\tilde s_{0,3}=\e_2 d_3$, $d_3=\tilde s_{0,3}=\e_3 \tilde
s_{0,2}=\e_3 d_2$, and $d_1=\tilde s_{0,1} = \e_1 \tilde
s_{0,1}=\e_1 d_1$. Since the $d_i$ are positive we conclude
$\e_1=\e_2=\e_3=1$, a contradiction. Thus we are left with only
three possibilities. Either $G=\Z/2\Z=\langle (0\;\; 1)(2\;\;
3)\rangle$, $G=\Z/2\Z=\langle (0\;\; 1)\rangle$, or $G$ is
trivial.

\vspace{.5cm}

\noindent {\bf Case 5.} $G$ contains $\sigma=(0\;\; 1)(2\;\; 3)$.

\vspace{.5cm}

Using the identities $\tilde s_{j,k}=\e_{\sigma(j)}\e_k \tilde
s_{\sigma(j),\sigma^{-1}(k)}$ and $\e_1\e_2\e_3=1$ gives
$$\tilde S=\left(\begin{matrix} 1 & d_1 & d_2 & d_3\cr d_1 & \e_1 & \e_2
d_3 & \e_1\e_2 d_2\cr d_2 & \e_2 d_3 & \tilde s_{2,2} & \tilde
s_{2,3}\cr d_3 &  \e_1\e_2 d_2 & \tilde s_{2,3} & \e_1 \tilde
s_{2,2}\end{matrix} \right).$$ Suppose first that $\e_1=1$. Then,
orthogonality of the first two columns of $\tilde S$ forces
$\e_2=-1$ and $d_1=d_2 d_3$. Orthogonality of the last column with
the first three gives the equations
$$\tilde s_{2,2} d_2 + \tilde s_{2,3} d_3 = d_2 (d_3^2-1),\;
\tilde s_{2,2} d_3 + \tilde s_{2,3} d_2 = d_3 (d_2^2-1),\; {\rm
and}\; \tilde s_{2,2} \tilde s_{2,3}=-d_2d_3.$$ Looking at the
cases $d_2=d_3$ and $d_2\ne d_3$ separately, these solve to give
$$\tilde s_{2,2} = -1\;\; {\rm and }\;\; \tilde s_{2,3} = d_1=d_2 d_3.$$
Hence
$$\tilde S=\left(\begin{matrix} 1 & d_2 d_3 & d_2 & d_3\cr d_2 d_3 & 1 &
-d_3 & -d_2\cr d_2 & -d_3 & -1 & d_2 d_3\cr d_3 & -d_2 & d_2 d_3 &
-1\end{matrix}\right).$$ This is exactly the $\tilde S$ matrix of
Case 2 above. Exactly as in that case, we get $d_2=\phi_m$,
$d_3=\phi_n$, and $d_1=d_2d_3$. The $N_i$ matrices and the twist
inequalities are the same, hence we conclude $(m,n)=(0,1)$,
$(0,2)$, or $(1,1)$. (Here we exclude $m=n=0$ since it gives $G$
trivial.) In the first case, $(m,n)=(0,1)$, the possible twist
matrices are given by $\theta_3=e^{\pm 4\pi i/5}$, $\theta_2=\pm
i$, and $\theta_1=\theta_2\theta_3$. In the second case,
$(m,n)=(0,2)$, no twist matrix exists. (To see this, note that the
$(1,1)$ and $(3,3)$ entries in the twist equation give
\begin{equation*}\begin{split}
\theta_1^2(\phi_2^2+\theta_1 + \theta_2 \phi_2^2 +\theta_3 ) & = D_+\\
-\theta_3^2(\phi_2^2+\theta_1 + \theta_2 \phi_2^2 +\theta_3 ) & =
D_+.
\end{split}\end{equation*}
Thus $\theta_1=\pm i \theta_3$. The $(0,1)$ and $(0,3)$ entries
give
\begin{equation*}\begin{split}
\theta_1(1+\theta_1 - \theta_2 -\theta_3 ) & = D_+\\
\theta_3(1-\theta_1 + \theta_2 -\theta_3 ) & = D_+.
\end{split}\end{equation*}
Subtracting these and using the result above gives $\theta_2=\pm
i$. Plugging these equations into $1+\theta_1\phi_2^2 +\theta_2 +
\theta_3\phi_2^2=D_+$, gives $D_+=(1\pm i)(1+\theta_3\phi_2^2)$.
Equating squared norms gives $\theta_3+\bar \theta_3 =
1-\phi_2^2$. However $1-\phi_2^2=-2-2{\sqrt 2}<-2$, so this is
impossible.) In the third case, $(m,n)=(1,1)$, the possible twist
matrices are given by $\theta_2=e^{\pm 4\pi i/5}$,
$\theta_3=e^{\pm 4\pi i/5}$, and $\theta_1=\theta_2\theta_3$.

Next consider the case $\e_1=-1$ so
$$\tilde S=\left(\begin{matrix} 1 & d_1 & d_2 & d_3\cr d_1 & -1 & \e_2
d_3 & -\e_2 d_2\cr d_2 & \e_2 d_3 & \tilde s_{2,2} & \tilde
s_{2,3}\cr d_3 & -\e_2 d_2 & \tilde s_{2,3} & -\tilde
s_{2,2}\end{matrix} \right).$$ By symmetry under interchanging
$N_2$ and $N_3$, we may assume $\e_2=+1$. Since $\sigma$ is the
only nontrivial element of the Galois group, we conclude that
$$d_1-\frac{1}{d_1}=n,\;\; d_2+\frac{d_3}{d_1}=r,\;\; {\rm and}\;\;
d_3-\frac{d_2}{d_1}=s$$ are integers. Hence $d_1=\phi_n$. If
$n<0$, then $d_1<1$ and since it is the largest root $d_3<d_2$.
But then it follows that ${\rm
trace}(N_1)=d_1-1/d_1+d_3/d_2-d_2/d_3<0$, an impossibility.
Further if $n=0$, then the same argument shows $d_1=1$ and
$d_3=d_2$. However, looking at the eigenvalues of $N_2$ shows
$\sigma(d_2)=d_3/d_1>0$ and looking at $N_3$ shows
$\sigma(d_3)=-d_2/d_1<0$. Thus $n\ge 1$ and $d_1$ is irrational.
Since $G=\Z/2\Z$, it follows that $K={\mathbb Q}[d_1]$. Hence we
can write $d_2=ad_1+b$ and $d_3=\tilde a d_1+\tilde b$ for
rational $a,\tilde a,b,\tilde b$. Hence
$$r=d_2+\frac{d_3}{d_1}=(a+\tilde b) d_1 + b + \tilde a - n \tilde b,\;\;
{\rm and}\;\; s=d_3-\frac{d_2}{d_1}=(\tilde a-b) d_1 +\tilde b - a + n
b.$$ Hence $\tilde b=-a$, $\tilde a=b$, $r=na+2b$ and $s=nb-2a$.
Note that in complex terms this gives $d_2+id_3 = (a+ib)(d_1-i)$
and $r+is=(a+ib)(n-2i)$. In particular
$D^2=1+d_1^2+d_2^2+d_3^2=(1+d_1^2)(n^2+r^2+s^2+4)/(n^2+4)$.

Since the columns of $\tilde S$ are of equal length $\tilde
s_{2,2}^2+\tilde s_{2,3}^2=1+d_1^2$. Since $\tilde s_{2,2}/d_2$ is
an eigenvalue of $N_2$, $(\tilde s_{2,2}/d_2) \sigma(\tilde
s_{2,2}/d_2) =\tilde s_{2,2}\tilde s_{2,3}/(d_2d_3)$ is an
integer. Further $\tilde s_{2,2}\ne 0$, since $\tilde s_{2,2}=0$
would force $\sigma(\tilde s_{2,2})=0$ and hence $\tilde
s_{2,3}=0$. Thus
$$\frac{1+d_1^2}{2d_2d_3}\ge \frac{\vert \tilde s_{2,2}\tilde s_{2,3}\vert}{d_2d_3}\ge 1.$$
The twist inequality coming from the $(0,1)$ entry of $\tilde S$
gives
\begin{equation*}\begin{split}
(1+d_1^2)^{1/2}(1+a^2+b^2)^{1/2}=D & \le 2 + 2\frac{d_2d_3}{d_1}\\
& \le 2 + \frac{2d_2d_3}{1+d_1^2}\cdot \frac{1+d_1^2}{d_1}\\
& \le 2+\frac{1+d_1^2}{d_1} = \frac{(1+d_1)^2}{d_1}.
\end{split}\end{equation*}
Rewriting this gives using
$$r^2+s^2 \le (n^2+4) \frac{4d_1^3+5d_1^2+4d_1+1}{d_1^2(1+d_1^2)}.$$
The twist inequality coming from the $(0,0)$ entry of $\tilde S$
is $2 d_1^2\le D^2 + D$, hence
$$2 d_1^2 \le \frac{(1+d_1)^4}{d_1^2}+\frac{(1+d_1)^2}{d_1},\;\;{\rm or}$$
$$d_1^4\le 5 d_1^3 + 8 d_1^2 + 5 d_1 + 1.$$
It follows that $d_1<7$, hence $1\le n\le 6$. Together with the
bound on $r^2+s^2$ above, this leaves only finitely many
possibilities (110 of them, insisting that $d_2$ and $d_3$ be
positive, but finite).

Since $G=\Z/2\Z$, the quantities
$$\frac{d_2d_3}{d_1}=\frac{r^2+nrs-s^2}{n^2+4},$$
$$\frac{d_3}{d_2}-\frac{d_2}{d_3}=n+\frac{(n^2+4)rs}{r^2+nrs-s^2},$$
$$\frac{\tilde s_{2,2}}{d_2}+\frac{\tilde s_{2,3}}{d_3}=
\frac{(n^2-4)r^3-12nr^2s-3(n^2-4)rs^2+4ns^3}{(r^2+s^2)(r^2+nrs-s^2)},$$
$$\frac{\tilde s_{2,3}}{d_2}-\frac{\tilde s_{2,2}}{d_3}=
\frac{4nr^3+3(n^2-4)r^2s-12nrs^2-(n^2-4)s^3}{(r^2+s^2)(r^2+nrs-s^2)},$$
$$\frac{\tilde s_{2,2}\tilde s_{2,3}}{d_2d_3} =
\frac{(4-3n^2)(r^4-6r^2s^2+s^4)-2n(n^2-12)rs(r^2-s^2)}{(r^2+s^2)^2(r^2+nrs-s^2)}
$$
are all integers. Only 6 of the 110 examples pass these
integrality conditions. These are $(n,r,s)=(1,2,1)$, $(1,3,-1)$,
$(2,2,2)$, $(3,2,3)$, $(4,2,4)$, and $(4,3,1)$. The cases with
$n=s$ and $r=2$ can be ignored since they give $a=0$ and $b=1$,
hence $d_2=1$, $d_3=d_1$, and $\tilde s_{2,2}=-1$. Thus $\tilde
s_{3,3}=1$. Invoking the symmetry under interchanging $N_1$ and
$N_3$ puts us back in the case $\e_1=1$. These are just the
$(0,n)$ examples discussed above. The remaining two examples fail
to give integral $N_i$ matrices and also fail the twist
inequalities. Thus there are no new examples in this case.

\vspace{.5cm}

\noindent {\bf Case 6.} $G$ contains the transposition
$\sigma=(0\;\; 1)$.

\vspace{.5cm}

Since we can exclude the cases above, $\sigma$ must be the only
non-trivial element of $G$. Up to symmetry there are two cases.
The parity condition gives $\e_1\e_2\e_3=-1$. Since $d_2=\tilde
s_{0,2}=\e_1\e_2\tilde s_{1,2}$ and $\tilde s_{1,2}=\e_2 \tilde
s_{0,2}=\e_2 d_2$, we conclude $\e_1=1$ and $\e_3=-\e_2$. Then
$\tilde s_{2,3}=\e_2\e_3\tilde s_{3,2}$ and the fact that $\tilde
S$ is symmetric forces $\tilde s_{2,3}=\tilde s_{3,2}=0$. By
symmetry we may assume $\e_2=1$ and $\e_3=-1$. Thus we get
$$\tilde S=\left(\begin{matrix}1 & d_1 & d_2 & d_3\cr d_1 & 1 & d_2 &
-d_3\cr d_2 & d_2 & \tilde s_{2,2} & 0\cr d_3 & -d_3 & 0 & \tilde
s_{3,3}\end{matrix}\right).$$ Note that $\lambda_{2,2}=\tilde
s_{2,2}/d_2$ and $\lambda_{3,3}=\tilde s_{3,3}/d_3$ are integers.
Note that orthogonality of the third column of $\tilde S$ with the
first forces $\lambda_{2,2}<0$. Since $d_1\ge \sigma(d_1)=1/d_1$,
we conclude $d_1\ge 1$. Thus orthogonality of the first and fourth
columns of $\tilde S$ gives $\lambda_{3,3}\ge 0$. It is
straightforward, though somewhat tedious, to build the $N_i$
matrices in this case and worry about their integrality and
nonnegativity; however, there is an easier approach. Since $\tilde
s_{2,3}=\tilde s_{3,2}=0$, the twist equation for the $(2,3)$
entry becomes
$$\theta_2\theta_3(d_2 d_3 -\theta_1 d_2 d_3) = 0.$$
Thus we conclude $\theta_1=1$. Using this fact the $(2,2)$ entry
becomes
$$\theta_2^2(2 d_2 ^2 + \theta_2 \tilde s_{2,2}^2) = D_+ \tilde s_{2,2}.$$
Since $\tilde s_{2,2}/d_2=\lambda_{2,2}<0$ is integral and $\vert
D_+\vert^2=D^2 = (\lambda_{2,2}^2+2)d_2^2$, equating the squared
norms of the sides of this equation gives
$$\theta_2+\bar \theta_2 = 1-\frac{2}{\lambda_{2,2}^2}.$$
The left hand side is an algebraic integer, hence we conclude
$\lambda_{2,2}=-1$ and $\theta_2=\exp(\pm 2\pi i/3)$. Similarly,
the $(3,3)$ entry gives $\lambda_{3,3}=1$ and $\theta_3=\exp(\pm
2\pi i/3)$. Equating the squared lengths of the last two columns
of $\tilde S$ now gives $d_2^2=d_3^2$, hence $d_2=d_3$. This is a
contradiction, since $\sigma(d_2)=\lambda_{2,1}=d_2/d_1$ but
$\sigma(d_3)=\lambda_{3,1}=-d_3/d_1$.

\vspace{.5cm}

\noindent {\bf Case 7.} $G$ is trivial.

\vspace{.5cm}

This case is also contained in \cite{CZ}.

In this case all the $d_i$ and $\lambda_{i,j}$ are integral. By
symmetry, we may assume $1\le d_1\le d_2\le d_3$. Since every
column of $\tilde S$ must have squared length $D^2$, we see that
$D^2$ must be a multiple of $d_i^2$ for all $i$. If $d_1=d_2=d_3$,
then $d_1^2$ must divide $D^2=3d_1^2+1$. Hence $d_1=d_2=d_3=1$. Up
to symmetry orthogonality of the columns of $\tilde S$ forces
$$\tilde S=\left(\begin{matrix} 1 & 1 & 1 & 1\cr 1 & -1 & -1 & 1\cr 1 & -1
& 1 & -1\cr 1 & 1 & -1 & -1\end{matrix}\right),$$ or
$$\tS=\left(\begin{matrix} 1 & 1 & 1 & 1\cr 1 & 1 & -1 & -1\cr 1 & -1
& 1 & -1\cr 1 & -1 & -1 & 1\end{matrix}\right).$$ For the first
$\tS$ matrix, this gives integral $N_i$ matrices
$$N_1=\left(\begin{matrix} 0 & 1 & 0 & 0\cr 1 & 0 & 0 & 0\cr 0 & 0
& 0 & 1\cr 0 & 0 & 1 & 0\end{matrix}\right),\;\;
N_2=\left(\begin{matrix} 0 & 0 & 1 & 0\cr 0 & 0 & 0 & 1\cr 1 & 0 &
0 & 0\cr 0 & 1 & 0 & 0\end{matrix}\right),\;\; {\rm and}\;\;
N_3=\left(\begin{matrix} 0 & 0 & 0 & 1\cr 0 & 0 & 1 & 0\cr 0 & 1 &
0 & 0\cr 1 & 0 & 0 & 0\end{matrix}\right).$$ The possibilities for
the corresponding twist matrix are
$$T=\left(\begin{matrix} 1 & 0 & 0 & 0\cr 0 & i & 0 & 0\cr 0 & 0
& 1 & 0\cr 0 & 0 & 0 & -i\end{matrix}\right),\;\;
\left(\begin{matrix} 1 & 0 & 0 & 0\cr 0 & i & 0 & 0\cr 0 & 0 & -1
& 0\cr 0 & 0 & 0 & i\end{matrix}\right),$$ or their complex
conjugates. (Note that this is the case $m=n=0$ of the form found
in Cases 2 and 5.) For the second $\tS$ matrix, the compatible $T$
matrices are listed in Table \ref{rank4ms}.

If the $d_i$ are not all equal, then $d_1<d_3$, hence
$1+d_1^2<d_3^2$ and $d_3^2<D^2<3 d_3^2$. Thus we must have $D^2=2
d_3^2$, i.e., two of the $\tilde s_{i,3}$ are zero and the other
two are $\pm d_3$. Of course $\tilde s_{0,3}=d_3$. Suppose $\tilde
s_{j,3}=\pm d_3$ is the other nonzero entry in the last column.
Orthogonality of the first and last columns gives $d_3(1\pm
d_j)=0$, hence the lower sign is correct and $d_j=1$. Thus using
symmetry we may assume $j=1$. Orthogonality of the remaining
columns with the last column gives $\tilde s_{1,1}=d_1=1$ and
$\tilde s_{2,1}=d_2$. Orthogonality of the third column and the
first two gives $\tilde s_{2,2}=-2$ and
$$\tilde S=\left(\begin{matrix} 1 & 1 & d_2 & d_3\cr 1 & 1 & d_2 & -d_3\cr
d_2 & d_2 & -2 & 0\cr d_3 & -d_3 & 0 & 0\end{matrix}\right).$$
However equality of the squared lengths of the last two columns
now gives $D^2=2d_3^2=2d_2^2+4$ or $d_3^2=d_2^2+2$, an
impossibility.

\end{proof}

\vspace{.5cm}

\section{Realization of fusion rules and Classification of MTCs}\label{realization}

In this section, for each modular fusion rule $(\mN;\tS)$ in
Theorems \ref{Srank2}, \ref{Srank3}, \ref{Srank4}, we will first
determine all modular symbols with this fusion rule which also
satisfy (1)(2) of Proposition \ref{modularprop}; then classify all
MTCs realizing each such modular symbol.

For each modular fusion rule $(\mN;\tS)$, there are two choices of
compatible $S$ matrices: $S=\frac{1}{D} \tS$ or $-\frac{1}{D}\tS$.
When the two modular symbols are realized by (2+1)-TQFTs,
respectively, one TQFT is obtained from the other by tensoring the
trivial theory with $S=(-1)$.  The quantum invariant of the
3-sphere will be $\frac{1}{D}$ or $-\frac{1}{D}$, respectively.
Also the topological central charge $c$ of the two theories will
differ by $4$.  Another symmetry for modular symbols is complex
conjugation: to change $(\mN;S,T)$ to
$(\mN;S^{\dagger},T^{\dagger})$.  Complex conjugation of a modular
symbol gives rise to a different modular symbol if one of the
$S,T$ is not a real matrix.

Given an $\tS$ matrix, we can obtain all fusion matrices by using
the Verlinde formulas.  Instead of listing fusion matrices, we
will present them as fusion rules.  In the following, we will not
list trivial fusion rules such as $1\otimes x=x$ and those that
can be obtained from obvious identities such as $x\otimes
y=y\otimes x$. We will also write $x\otimes y$ as $xy$ sometimes.
Then we use relations (3)(i)-(iv) of Definition \ref{fusionrule}
together with (1) (2) of Proposition \ref{modularprop} to
determine the possible $T$-matrices.  As Example \ref{isingex}
illustrates, Proposition \ref{modularprop} is necessary to get
finitely many solutions in some cases.  We find that there are
finitely many modular symbols $(\mN;S,T)$ of rank$\leq4$
satisfying Proposition \ref{modularprop}. Modulo the symmetry
$S\rightarrow -S$, these modular symbols are classified in Table
\ref{rank4ms}.  In the table, $\zeta_m=e^{\frac{2\pi i}{m}}$.  The labels will be $\{1,X \}$ for rank=$2$,
$\{1,X,Y \}$ for rank=$3$, and $\{1,X,Y,Z\}$ for rank=$4$.  They
will correspond to rows $1,2,3,4$ of the $\tS$ matrices.  The $\#$
is the number of modular symbols satisfying Proposition
\ref{modularprop} modulo the symmetry $S\rightarrow -S$.  The
column $P$ stands for primality, and the column $G$ is the Galois
group of the modular fusion rule.

\begin{table}\caption{Rank$\leq4$ unitary modular symbols}\label{rank4ms}
\begin{tabular}{|c|p{4cm}|p{5cm}|c|c|c|}
\hline
$\tS$ matrix & Fusion rules & $T$ matrix & $\#$s & P & G\\
\hline\hline
$\tS=(1)$ &  & $(1)$ & 1 & Yes & 1\\
\hline
 Thm 3.1(1) & $X^2=1$ & $\Diag(1,\pm i)$  & 2 & Yes &1\\
\hline Thm 3.1(2) & $X^2=1+X$ & $\Diag(1,(\zeta_5)^{\pm 2})$ & 2  & Yes& $\Z_2$\\
\hline Thm 3.2(1) &\raggedright $X^2=X^*$, $XX^*=1$, $(X^*)^2=X$ & \raisebox{-1ex}[0pt]{$\Diag(1,\zeta_3^{\pm 1},\zeta_3^{\pm 1})$} & 2 & Yes& $\Z_2$\\
\hline Thm 3.2(2) &\raggedright $X^2=1+Y$, $XY=X$, $Y^2=1$ & \raisebox{-1ex}[0pt]{$\Diag(1,(\zeta_{16})^{2k+1},-1)$} & 8 & Yes& $\Z_2$\\
\hline Thm 3.2(3) &\raggedright $X^2=1+X+Y$, $XY=X+Y$, $Y^2=1+X$ & \raisebox{-1ex}[0pt]{$\Diag(1,(\zeta_7)^{\pm 5},(\zeta_7)^{\pm 1})$} & 2 & Yes& $\Z_3$\\
\hline Thm 4.1(1) &\raggedright $X^2=Y=(X^*)^2$, $XX^*=1=Y^2$, $XY=X^*$, $X^*Y=X$   &\raggedright \raisebox{-1ex}[0pt]{$\Diag(1,-1,(\zeta_8)^{\pm m},(\zeta_8)^{\pm m}),$} \\ \raisebox{-1ex}[0pt]{$m$=1,3}& 4 &Yes & $\Z_2$\\
\hline Thm 4.1(2) &\raggedright $X^2=1$, $XY=Z$, $XZ=Y$, $Y^2=1$,
$YZ=X$, $Z^2=1$ &
\raggedright \raisebox{-1ex}[0pt]{$\Diag(1,-1,\e_1,\e_1)$,} \\ \raisebox{-1ex}[0pt]{$\e_1^2=1$}  & 2 & Yes& 1\\
\hline Thm 4.1(3) &\raggedright $X^2=1$, $XY=Z$, $XZ=Y$, $Y^2=1$, $YZ=X$, $Z^2=1$ &  \raggedright
\raisebox{-1ex}[0pt]{$\Diag(1,\theta_1,\theta_2,\theta_1\theta_2)$,}\\ \raisebox{-1ex}[0pt]{$\theta_i^2=-1$} &3 & No& 1\\
\hline Thm 4.1(4) &\raggedright $X^2=1+X$, $XY=Z$, $XZ=Y+Z$, $Y^2=1$, $YZ=X$, $Z^2=1+X$ & \raggedright
\raisebox{-2ex}[0pt]{$\Diag(1,\theta_1,\theta_2,\theta_1\theta_2)$,}\\ \raisebox{-2ex}[0pt]{$\theta_1=(\zeta_5)^{\pm 2}$, $\theta_2=\pm i$} & 4 & No& $\Z_2$\\
\hline Thm 4.1(5) &\raggedright $X^2=1+X$, $XY=Z$, $XZ=Y+Z$, $Y^2=1+Y$, $YZ=X+Z$, $Z^2=1+X+Y+Z$ & \raggedright \raisebox{-3ex}[0pt]{$\Diag(1,\theta_1,\theta_2,\theta_1\theta_2)$,} \raisebox{-3ex}[0pt]{$\theta_1=(\zeta_5)^{\pm 2}$, $\theta_2=(\zeta_5)^{\pm 2}$} & 3 & No&$\Z_2$\\
\hline Thm 4.1(6) &\raggedright $X^2=1+X+Y$, $XY=X+Y+Z$, $XZ=Y+Z$,
$Y^2=1+X+Y+Z$, $YZ=X+Y$, $Z^2=1+X$ &\raggedright \raisebox{-4.5ex}[0pt]{$\Diag(1,\zeta_9^{\pm 2}, (\zeta_9)^{\pm 6},(\zeta_9)^{\mp 6})$} & 2 & Yes& $\Z_3$\\
\hline
\end{tabular}
\end{table}

With modular symbols determined, we turn to realizing each of them
with MTCs.   First let us consider the $\tS$-matrices
corresponding to Theorem \ref{Srank4}(3)-(5).  In each of these
cases there is a rank$=$2 tensor subcategory corresponding to the
objects labelling columns $1$ and $3$ of the $\tS$ matrix. Further
inspection shows that the submatrix of $\tS$ corresponding to rows
and columns $1$ and $3$ is invertible.
It is obvious that the tensor subcategory generated by the trivial
object and the object labelling column $3$ is a modular
subcategory.  For the $\tS$-matrices of Theorem
\ref{Srank4}(3),(4) these rank$=$2 modular subcategories are
equivalent to the UMTCs corresponding to
Theorem \ref{Srank2}(1), while the modular subcategory
corresponding to the $\tS$-matrix of Theorem \ref{Srank4}(5) is
equivalent to (one of) the UMTCs coming from
Theorem \ref{Srank2}(2).  By \cite{MugerLMS}[Theorem 4.2] this
implies that the MTCs corresponding to these $\tS$ matrices are
direct products of rank$=$2 MTCs.  For this reason we will not
write down realizations or complete data for these MTCs as they
can be deduced from their product structure.

MTCs realizing the remaining $8$ nontrivial modular symbols are
\emph{prime}, i.e. they do not have non-trivial modular
subcategories.  To complete the classification, we need to solve
the pentagon and hexagon equations for all $8$ modular symbols.
The solutions of the pentagon equations are organized into the
$F$-matrices whose entries are called $6j$ symbols.  The solutions
of hexagons are given by the braiding eigenvalues.

\subsection{F-matrices}\label{fmatrix}

Given an MTC $\mC$.  A $4$-punctured sphere $S^2_{a,b,c,d}$, where
the 4 punctures are labelled by $a,b,c,d$, can be divided into two
pairs of pants(=3-punctured spheres) in two different ways.  In
the following figure, the 4-punctured sphere is the boundary of a
thickened neighborhood of the graph in either side, and the two
graphs encode the two different pants-decompositions of the
4-punctured sphere.  The F-move is just the change of the two
pants-decompositions.

 When bases of all pair of pants spaces $\Hom(a\otimes b,c)$ are chosen, then the
two pants decompositions of $S^2_{a,b,c,d}$ determine bases of the
vector spaces $\Hom((a\otimes b)\otimes c,d)$, and $\Hom(a\otimes
(b\otimes c),d)$, respectively. Therefore the $F$-move induces a
matrix $F^{a,b,c}_{d}: \Hom((a\otimes b)\otimes c,d)\rightarrow
\Hom(a\otimes (b\otimes c),d)$, which are called the F-matrices.
Consistency of the $F$ matrices are given by the pentagon
equations.

For each quadruple $(a,b,c,d)$, we have an $F$-matrix whose
entries are indexed by a pair of triples $((m,s,t),(n,u,v))$,
where $m,n$ are the labels for the internal edges, and $s,t,u,v$
are indices for a basis of the $\Hom(x\otimes y,z)$ spaces with
$\dim
>1$. For the MTCs in our paper, none of the $\Hom(a\otimes b,c)$
has $\dim>1$, so we will drop the $s,t,u,v$ from our notation.  If
one of the $a,b,c$ in $F^{a,b,c}_d$ is the trivial label, then we
may assume $F^{a,b,c}_d$ is the identity matrix.  But we cannot
always do so if $d$ is the trivial label.  In the following, any
unlisted $F$ matrix is the identity.

\[ \xy
(0,10)*{}="A"; (4,10)*{}="B";(8,10)*{}="C"; (12,-10)*{}="D";
(4,3.3)*{}="E";(8,-3.3)*{}="F"; "A";"D" **\dir{-}; "B";"E"
**\dir{-};"C";"F" **\dir{-}; (0,12)*{a};(4,12)*{b};
(8,12)*{c};(12,-12)*{d}; (16,0)*{}="S"; (20,0)*{}="S'"; "S";"S'"
**\dir2{-}; (18,4)*{F_d^{abc}}; (26,10)*{}="A'";
(30,10)*{}="B'";(34,10)*{}="C'"; (22,-10)*{}="D'";
(26,-3.3)*{}="E'";(30,3.3)*{}="F'"; "C'";"D'" **\dir{-}; "B'";"F'"
**\dir{-};"A'";"E'" **\dir{-}; (26,12)*{a};(30,12)*{b};
(34,12)*{c};(22,-12)*{d}; (4,-3)*{m}; (30,-3)*{n};
\endxy \]

\subsection{Braidings and twists}

The twist of the simple type $X_i$ will be denoted by $\theta_i$,
and it is defined by the following positive twist.

\[ \xy
(0,10)*{}="T"; (0,-10)*{}="B"; (0,5)*{}="T'"; (0,-5)*{}="B'";
"T";"T'" **\dir{-}; "B";"B'" **\dir{-}; (7,0)*{}="LB"; "B'";"LB"
**\crv{(1,4) & (7,4)}; \POS?(.25)*{\hole}="2z"; "LB"; "2z"
**\crv{(8,-6) & (2,-6)}; "2z"; "T'" **\crv{(0,-3)}; (12,0)*{}="S";
(14,0)*{}="S'"; "S";"S'" **\dir2{-}; (18,0)*{\theta_i};
(22,10)*{}="U"; (22,-10)*{}="V"; "U";"V" **\dir{-};
\endxy \]

The braiding eigenvalues are defined by the following diagram:

\[ \xy
(0,12)*{}="A"; (6,12)*{}="B"; (3,0)*{}="V"; (3,-10)*{}="C";
"V";"C" **\dir{-}; "V";"B" **\crv{(1,2) & (1,4)};
\POS?(.65)*{\hole}="2z"; "V";"2z" **\crv{(5,2) & (5,4)}; "2z";"A"
**\crv{(3,5) & (2,6)}; (0,14)*{a};(6,14)*{b};(3,-12)*{c};
(12,0)*{}="S"; (14,0)*{}="S'"; "S";"S'" **\dir2{-};
(20,0)*{R_c^{ab}}; (26,12)*{}="A'"; (32,12)*{}="B'";
(29,0)*{}="V'"; (29,-10)*{}="C'"; "V'";"C'" **\dir{-}; "V'";"A'"
**\dir{-}; "V'";"B'" **\dir{-};
(26,14)*{a};(32,14)*{b};(29,-12)*{c};
\endxy \]

The consistency equations of the braidings are given by two
independent families of hexagon equations.  If $c$ is the trivial
label, and label $a$ is self-dual, then
$R_{1}^{aa}=\nu_a\theta_a^{-1}$, where $\nu_a$ is the
Frobenius-Schur indicator of $a$.

\subsection{Explicit data}\label{explicitdata}

In this section, we give the explicit data for at least one
realization of each prime modular fusion rule.  Since each modular
symbol can have up to $4$ MTC realizations, we will present the
complete data for only one of them. We choose one with the
following properties:

(1) The $(0,0)$ entry of the $S$ matrix is $\frac{1}{D}$, where
$D$ is the total quantum order;

(2) In a category with a generating non-abelian simple object $X$,
we choose a theory with the positive exponent of the twist
$\theta_X$ being the smallest. This is inspired by anyon theory
that the simple object with the smallest exponent is the most
relevant in physical experiments.

If a modular symbol $(\mN;S,T)$ is realized by an MTC, then the
modular symbol $(\mN;-S,T)$ is also realized by an MTC.  The
modular symbol $(\mN;\tS^\dag,T^\dag)$ is realized by complex
conjugating all $F$ matrices and braidings of $(\mN;S,T)$. So in
the following each group of data will be for $4$ MTCs if any of
$S,T,F$ and braidings are not real; otherwise there will be two.
We choose the $F$ matrices to be unitary, and real if possible.

In anyon theory, labels will be called anyon types.  The smallest
positive exponent of a twist $\theta_i$ will be called the
topological spin of the anyon type $i$.  Topological spins are the
conformal dimensions modulo integers of the corresponding primary
field if the MTC has an corresponding RCFT.  The last line of the
data lists all quantum group realizations of the same theory. We
did not list the Frobenius-Schur indicators of anyons because they
can be calculated by the formula in Proposition \ref{modularprop}.
In the following data, only the semion $s$ and the $(A_1,2)$
non-abelian anyon $\sigma$ have Frobenius-Schur indicator=$-1$.

\subsubsection{Semion MTC}

We will use $s$ to denote the non-trivial label.

Anyon types: $\{1,s\}$

Fusion rules: $s^2=1$

Quantum dimensions: $\{1,1\}$

Twists: $\theta_1=1, \theta_{s}=i$

Total quantum order: $D=\sqrt{2}$

Topological central charge: $c=1$

Braidings: $R_1^{ss}=i$

S-matrix: $ S=\frac{1}{\sqrt{2}} \begin{pmatrix} 1 & 1 \\
1 & -1
\end{pmatrix},$

F-matrices:  $ F^{s,s,s}_s=(-1)$

Realizations: $(A_1,1)$, $(E_7,1)$.

\subsubsection{Fibonacci MTC}

We will use $\varphi$ to denote the golden ratio
$\varphi=\frac{1+\sqrt{5}}{2}$ and $\tau$ the non-trivial label.

Anyon types: $\{1,\tau\}$

Fusion rules: $\tau^2=1+\tau$

Quantum dimensions: $\{1,\varphi\}$

Twists: $\theta_1=1, \theta_{\tau}=e^{\frac{4\pi i}{5}}$

Total quantum order:
$D=2\cos(\frac{\pi}{10})=\frac{\sqrt{5}}{2\sin(\frac{\pi}{5})}$

Topological central charge: $c=\frac{14}{5}$

Braidings: $R_1^{\tau \tau}=e^{-\frac{4\pi i}{5}},
R_{\tau}^{\tau\tau}=e^{\frac{3\pi i}{5}}$

S-matrix: $ S=\frac{1}{\sqrt{2+\varphi}} \begin{pmatrix} 1 & \varphi \\
\varphi & -1
\end{pmatrix}$,

F-matrices:  $ F^{\tau,\tau,\tau}_{\tau}=\begin{pmatrix} \varphi^{-1} & {\varphi}^{-1/2} \\
{\varphi}^{-1/2} & -\varphi^{-1}
\end{pmatrix}$

Realizations: $(A_1,3)_{\frac{1}{2}}$, $(G_2,1)$, complex conjugate of $(F_4,1)$.

\subsubsection{$\Z_3$ MTC}  We will use $\omega$ for both a non-trivial label and the
root of unity $\omega=e^{2\pi i/3}$.  No confusions should arise.

Anyon types: $\{1,\omega,\omega^{*}\}$

Fusion rules:
$\omega^2=\omega^*,\omega\omega^*=1,(\omega^*)^2=\omega$

Quantum dimensions: $\{1,1,1\}$

Twists: $\theta_1=1,
\theta_{\omega}=\theta_{\omega^*}=e^{\frac{2\pi i}{3}}$

Total quantum order: $D=\sqrt{3}$

Topological central charge: $c=2$

Braidings:
$R^{\omega,\omega^*}_{1}=R^{\omega^*,\omega}_{1}=e^{-\frac{2\pi
i}{3}},
R^{\omega,\omega}_{\omega^*}=R^{\omega^*,\omega^*}_{\omega}=e^{-\frac{4\pi
i}{3}}$,

S-matrix: $S=\frac{1}{\sqrt{3}}\begin{pmatrix} 1 &1 &1\\ 1 &
\omega&\omega^2\\1&\omega^2&\omega\end{pmatrix}$,

F-matrices:  $ F^{a,b,c}_d=(1)$ for any $a,b,c,d$,

Realizations: $(A_2,1)$, $(E_6,1)$

\subsubsection{Ising MTC}

We will use $1,\sigma,\psi$ to denote the non-trivial labels.

Anyon types: $\{1,\sigma, \psi\}$

Fusion rules:
$\sigma^2=1+\psi,\sigma\psi=\psi\sigma=\sigma,\psi^2=1$

Quantum dimensions: $\{1,\sqrt{2}, 1\}$

Twists: $\theta_1=1, \theta_{\sigma}=e^{\frac{\pi i}{8}},\theta_{\psi}=-1$

Total quantum order: $D=2$

Topological central charge: $c=\frac{1}{2}$

Braidings: $R_1^{\sigma \sigma}=e^{-\frac{\pi i}{8}},
R_1^{\psi\psi}=-1,
R_{\sigma}^{\psi\sigma}=R_{\sigma}^{\sigma\psi}=-i,
R_{\psi}^{\sigma\sigma}=e^{\frac{3\pi i}{8}}$

S-matrix: $ S=\frac{1}{2}\begin{pmatrix} 1 &\sqrt{2} &1\\\sqrt{2}
& 0&-\sqrt{2}\\ 1&-\sqrt{2}&1\end{pmatrix}$,

F-matrices: $
F^{\sigma,\sigma,\sigma}_{\sigma}=\frac{1}{\sqrt{2}}\begin{pmatrix}
1 &1\\ 1&-1\end{pmatrix}$,$ F^{\psi,\sigma,\psi}_{\sigma}=(-1)$,$
F^{\sigma,\psi,\sigma}_{\psi}=(-1)$,

Realizations: complex conjugate of $(E_8,2)$.

\subsubsection{$(A_1,2)$ MTC}

We will use $1,\sigma,\psi$ to denote the non-trivial labels
again.

Anyon types: $\{1,\sigma, \psi\}$

Fusion rules:
$\sigma^2=1+\psi,\sigma\psi=\psi\sigma=\sigma,\psi^2=1$

Quantum dimensions: $\{1,\sqrt{2}, 1\}$

Twists: $\theta_1=1, \theta_{\sigma}=e^{\frac{3\pi
i}{8}},\theta_{\psi}=-1$

Total quantum order: $D=2$

Topological central charge: $c=\frac{3}{2}$

Braidings: $R_1^{\sigma \sigma}=-e^{-\frac{3\pi i}{8}},
R_1^{\psi\psi}=-1,
R_{\sigma}^{\psi\sigma}=R_{\sigma}^{\sigma\psi}=i,
R_{\psi}^{\sigma\sigma}=e^{\frac{\pi i}{8}}$

S-matrix: $ S=\frac{1}{2}\begin{pmatrix} 1 &\sqrt{2} &1\\\sqrt{2}
& 0&-\sqrt{2}\\ 1&-\sqrt{2}&1\end{pmatrix}$,

F-matrices: $
F^{\sigma,\sigma,\sigma}_{\sigma}=-\frac{1}{\sqrt{2}}\begin{pmatrix}
1 &1\\ 1&-1\end{pmatrix}$,$ F^{\psi,\sigma,\psi}_{\sigma}=(-1)$,$
F^{\sigma,\psi,\sigma}_{\psi}=(-1)$,

Realizations: $(A_1,2)$.

\subsubsection{$(A_1,5)_{\frac{1}{2}}$ MTC}

We will use $1, \alpha,\beta$ to denote the non-trivial labels.
Note that $1,\alpha,\beta$ are special labels for $1,Y,X$ in
Theorem 3.2(3) of Table \ref{rank4ms}.

Anyon types: $\{1,\alpha,\beta\}$

Fusion rules:
$\alpha^2=1+\beta,\alpha\beta=\alpha+\beta,\beta^2=1+\alpha+\beta$

Quantum dimensions: $\{1,d,d^2-1\}$, where
$d=2\cos(\frac{\pi}{7})$

Twists: $\theta_1=1, \theta_{\alpha}=e^{\frac{2\pi
i}{7}},\theta_{\beta}=e^{\frac{10\pi i}{7}}$

Total quantum order: $D=\frac{\sqrt{7}}{2\sin(\frac{\pi}{7})}$

Topological central charge: $c=\frac{48}{7}$

Braidings:  $R_1^{\alpha \alpha}=e^{-\frac{2\pi i}{7}},
R_1^{\beta\beta}=e^{-\frac{10\pi i}{7}},$

$R_{\alpha}^{\beta\beta}=e^{-\frac{2\pi i}{7}},
R_{\alpha}^{\alpha\beta}=R_{\alpha}^{\beta\alpha}=e^{\frac{9\pi
i}{7}},$

$R_{\beta}^{\beta\beta}=e^{-\frac{5\pi
i}{7}},R_{\beta}^{\alpha\beta}=R_{\beta}^{\beta\alpha}=e^{\frac{6\pi
i}{7}}, R_{\beta}^{\alpha\alpha}=e^{-\frac{4\pi i}{7}}$

S-matrix: $S=\frac{1}{D}\begin{pmatrix} 1 &d& d^2-1
\\ d & -d^2+1&1\\d^2-1&1&-d\end{pmatrix}$

F-matrices: see the end of this subsection.

Realizations: $(A_1,5)_{\frac{1}{2}}$

\subsubsection{$\Z_4$ MTC}

We will use $1,\e,\sigma,\sigma^*$ to denote the non-trivial labels.

Anyon types: $\{1,\e,\sigma,\sigma^*\}$

Fusion rule: $\e^2=\sigma\sigma^*=1, \sigma^2=(\sigma^*)^2=\e,\sigma\e=\sigma^*,\sigma^*\e=\sigma$

Quantum dimensions: $\{1,1,1,1\}$

Twists: $\theta_1=1,
\theta_{\e}=-1,\theta_{\sigma}=\theta_{\sigma^*}=e^{\frac{\pi
i}{4}}$

Total quantum order: $D=2$

Topological central charge: $c=1$

Braidings: $R^{\e,\e}_1=-1,
R^{\sigma,\sigma}_{\e}=R^{\sigma^*,\sigma*}_{\e}=e^{\frac{\pi
i}{4}}$,
$R^{\sigma,\sigma^*}_{1}=R^{\sigma^*,\sigma}_{1}=e^{-\frac{\pi
i}{4}}$,
$R^{\sigma,\e}_{\sigma^*}=R^{\e,\sigma}_{\sigma^*}=R^{\sigma^*,\e}_{\sigma}=R^{\e,\sigma^*}_{\sigma}=-i$

S-matrix:  $S=\frac{1}{2}\begin{pmatrix}
   1& 1& 1 & 1 \\
   1 & 1& -1 &-1\\
   1 & -1&-i &i\\
   1&-1&i& -i
 \end{pmatrix}$

F-matrices:
$F^{\sigma,\sigma,\sigma}_{\sigma^*}=F^{\sigma^*,\sigma^*,\sigma^*}_{\sigma}=F^{\e,\sigma,\e}_{\sigma}
=F^{\e,\sigma^*,\e}_{\sigma^*}=F^{\sigma,\e,\sigma^*}_{\e}=F^{\sigma^*,\e,\sigma}_{\e}=(-1)$

Realizations: $(A_3,1)$, $(D_9,1)$.

\subsubsection{Toric code MTC}

The fusion rules are the same as $\Z_2\times \Z_2$, but the theory
is not a direct product. We will use $1,e,m, \e$ to denote the
non-trivial labels.

Anyon types: $\{1,e,m,\e\}$

Fusion rules: $e^2=m^2=\e^2=1, em=\e,e\e=m,m\e=e$

Quantum dimensions: $\{1,1,1,1\}$

Twists: $\theta_1=\theta_e=\theta_m=1, \theta_{\e}=-1$

Total quantum order: $D=2$

Topological central charge: $c=0$

Braidings: $R^{\e,\e}_1=-1, R^{e,m}_{\e}=1,R^{m,e}_{\e}=-1$,
$R^{e,e}_{1}=R^{m,m}_{1}=1$,
$R^{\e,m}_{e}=1,R^{m,\e}_{e}=-1,R^{e,\e}_{m}=1, R^{\e,e}_{m}=-1$

S-matrix: $S=\frac{1}{2}\begin{pmatrix}
   1& 1& 1 &1 \\
   1 & 1& -1&-1  \\
   1 & -1 &1 &-1 \\
   1 &-1 & -1& 1
 \end{pmatrix}$

F-matrices: $F^{a,b,c}_d=(1)$ for all $a,b,c,d$.

Realizations: $(D_8,1)$,  $D(\Z_2)$---quantum double of $\Z_2$.

\subsubsection{$(D_4,1)$ MTC}

The fusion rules are the same as $\Z_2\times \Z_2$, but the theory
is not a direct product. We will use $1,e,m, \e$ to denote the
non-trivial labels again.

Anyon types: $\{1,e,m,\e\}$

Fusion rules: $e^2=m^2=\e^2=1, em=\e,e\e=m,m\e=e$

Quantum dimensions: $\{1,1,1,1\}$

Twists: $\theta_1=1, \theta_e=\theta_m=\theta_{\e}=-1$

Total quantum order: $D=2$

Topological central charge: $c=4$

Braidings: $R^{\e,\e}_1=-1, R^{e,m}_{\e}=-1,R^{m,e}_{\e}=1$,
$R^{e,e}_{1}=R^{m,m}_{1}=-1$,
$R^{\e,m}_{e}=1,R^{m,\e}_{e}=-1,R^{e,\e}_{m}=1, R^{\e,e}_{m}=-1$

S-matrix: $S=\frac{1}{2}\begin{pmatrix}
   1& 1& 1 &1 \\
   1 & 1& -1&-1  \\
   1 & -1 &1 &-1 \\
   1 &-1 & -1& 1
 \end{pmatrix}$

F-matrices: $F^{a,b,c}_d=(1)$ for all $a,b,c,d$.

Realizations: $(D_4,1)$.

\subsubsection{$(A_1,7)_{\frac{1}{2}}$ MTC}

We will use $1,\alpha,\omega,\rho$ to denote the non-trivial
labels.  Note that $1,\alpha,\omega,\rho$ are special labels for
$1,Z,Y,X$ in Theorem 4.1(6) of Table \ref{rank4ms}.

Anyon types: $\{1,\alpha, \omega,\rho\}$

Fusion rules:
$\alpha^2=1+\omega,\alpha\omega=\alpha+\rho,\alpha\rho=\omega+\rho,\omega^2=1+\omega+\rho,$

$\omega\rho=\alpha+\omega+\rho,\rho^2=1+\alpha+\omega+\rho$

Quantum dimensions: $\{1,d,d^2-1,d+1\}$, where
$d=2\cos(\frac{\pi}{9})$ and $d^3=3d+1$.

Twists: $\theta_1=1, \theta_{\alpha}=e^{\frac{2\pi i}{3}},
\theta_{\omega}=e^{\frac{4\pi i}{9}}, \theta_{\rho}=e^{\frac{4\pi
i}{3}}$

Total quantum order: $D=\frac{3}{2\sin(\frac{\pi}{9})}$

Topological central charge: $c=\frac{10}{3}$

Braidings: $R_1^{\alpha \alpha}=e^{-\frac{2\pi i}{3}},
R_1^{\omega\omega}=e^{-\frac{4\pi i}{9}},R_1^{\rho
\rho}=e^{-\frac{4\pi i}{3}}$

$R_{\alpha}^{\alpha\omega}=R_{\alpha}^{\omega\alpha}=e^{\frac{7\pi
i}{9}},
R_{\alpha}^{\omega\rho}=R_{\alpha}^{\rho\omega}=e^{\frac{4\pi
i}{9}},
R_{\alpha}^{\rho\rho}=-1$

$R_{\omega}^{\alpha\rho}=R_{\omega}^{\rho\alpha}=e^{\frac{2\pi
i}{9}},
R_{\omega}^{\omega\rho}=R_{\omega}^{\rho\omega}=e^{-\frac{2\pi
i}{3}}, R_{\omega}^{\alpha\alpha}=e^{\frac{5\pi i}{9}},
R_{\omega}^{\rho\rho}=e^{-\frac{\pi i}{9}},
R_{\omega}^{\omega\omega}=e^{\frac{7\pi i}{9}},$

$R_{\rho}^{\alpha\omega}=R_{\rho}^{\omega\alpha}=e^{-\frac{8\pi
i}{9}}, R_{\rho}^{\alpha\rho}=R_{\rho}^{\rho\alpha}=e^{-\frac{\pi
i}3}, R_{\rho}^{\rho\omega}=R_{\rho}^{\omega\rho}=e^{\frac{7\pi
i}{9}}, R_{\rho}^{\omega\omega}=e^{\frac{2\pi i}{9}},
R_{\rho}^{\rho\rho}=e^{-\frac{2\pi i}{3}}.$

S-matrix:  $S=\frac{1}{D}\begin{pmatrix}
   1& d& d^2-1 &d+1\\
   d & -d-1& d^2-1 & -1  \\
   d^2-1 & d^2-1 & 0 &-d^2+1 \\
   d+1 &-1 &-d^2+1 & d
 \end{pmatrix}$,

F-matrices:  see below. 

Realizations: $(A_1,7)_{\frac{1}{2}}$, complex conjugate of
$(G_2)_2$.

The list of all $F$ matrices for an MTC can occupy many pages. But
they are needed for the computation of quantum invariants using
graph recouplings, the Hamiltonian formulation of MTCs as in
\cite{LW} or the study of anyon chains \cite{FTL}. For the MTCs
$(A_1,k)_{\frac{1}{2}}$ with odd $k$, all the data of the theory
can be obtained from \cite{KL}. For $k=5$, choose
$A=ie^{-\frac{2\pi i}{28}}$, the label set is $\mL=\{0,2,4\}$ in
\cite{KL} and $0=1,4=\alpha,2=\beta$. For $k=7$, set
$A=ie^{\frac{2\pi i}{36}}$, the label set is $\mL=\{0,2,4,6\}$ in
\cite{KL} and $0=1,6=\alpha,2=\omega,4=\rho$. The twist is given
by $\theta_a=(-1)^a A^{a(a+2)}$, and the braiding
$R_c^{ab}=(-1)^{\frac{a+b-c}{2}}A^{-\frac{a(a+2)+b(b+2)-c(c+2)}{2}}$.
The formulas for $6j$ symbols can be found in Chapter 10 of
\cite{KL}.  The $F$ matrices from \cite{KL} are not unitary, but
the complete data can be presented over an abelian Galois
extension of $\Q$.  To have unitary $F$ matrices, we need to
normalize the $\theta$ symbols as
$\theta(i,j,k)=\sqrt{d_id_jd_k}$.

The $(A_1,k)_{\frac{1}{2}}$, $k$ odd, MTCs have peculiar
properties regarding the relation between the bulk $(2+1)$-TQFTs
and the boundary RCFTs.  To realize $(A_1,k)_{\frac{1}{2}}$, $k$
odd, using the Kauffman bracket formalism, we set $A=ie^{\pm
\frac{2\pi i}{4(k+2)}}$. In order to
follow the convention above, we choose
$A=ie^{-\frac{2\pi i}{4(k+2)}}$ if $k=1$ mod $4$, and
$A=ie^{\frac{2\pi i}{4(k+2)}}$ if $k=-1$ mod $4$.  Note that in
both cases $A$ is a $2(k+2)$th root of unity.  We have

\begin{equation}\label{decom1}
(A_1,k)=\overline{ (A_1,k)_{\frac{1}{2}} }\times \textrm{the
semion},
\end{equation}
if $k=1$ mod $4$, and

\begin{equation}\label{decom2}
(A_1,k)=(A_1,k)_{\frac{1}{2}}\times \overline{\textrm{the
semion}},
\end{equation}
if $k=-1$ mod $4$.  The central charge of $(A_1,k)$ is
$\frac{3k}{k+2}$, which implies that the central charge of
$(A_1,k)_{\frac{1}{2}}$ is $c_k=1-\frac{3k}{k+2}$ if $k=1$ mod
$4$, and $c_k=1+\frac{3k}{k+2}$ if $k=-1$ mod $4$.

In Table \ref{rank12}, we list all unitary quantum groups categories of rank$\leq 12$ from the
standard construction.  For notation, see \cite{HRW}.

\begin{remark}  The following serves as a guide to Table \ref{rank12}.

\begin{enumerate}
\item In general we will list these categories as $(X_r,k)$ for the category obtained from
a quantum group of type $X_r$ at level $k$.  Observe that the corresponding root of unity
is of order $\ell=mk+h$ where $m=1$ for $X=A,D$, or $E$; $m=2$ for $X=B,C$ or $F$ and $m=3$ for $X=G$, and $h$ is the dual Coxeter number.


\item The category $(A_r,k)$ has a modular subcategory $(A_r,k)_{\frac{1}{r+1}}$ generated by the objects with integer weights provided $\gcd(r+1,k)=1$.  These are found on line 5 of Table \ref{rank12} where $$L=\{(1,2s+1),(2s,2),(2,4),(2,5),(2,7),(3,3),(4,3),(6,3): 1\leq s\leq 11\}.$$

\item We include the examples of pseudo-unitary categories coming from low-rank coincidences for quantum groups of types $F_4$ and $G_2$ at roots of unity of order coprime to $2$ and $3$ respectively.
\item This list includes different realizations of equivalent categories.  We eliminate those coincidences that occur because of Lie algebra isomorphisms
such as $\mathfrak{sp}_4\cong \mathfrak{so}_5$ etc., and do not include the trivial rank$=1$ category.
        \item NSD means the category contains non-self-dual objects.
        \item  ``c.f. $(X_r,k)$" means the
categories in question has the same fusion rules as those of $(X_r,k)$.
\item We include the three categories
coming from doubles of finite groups with rank$\leq$12, although they are not strictly speaking of quantum group type.
\end{enumerate}
\end{remark}

\begin{table}\caption{Unitary Quantum Group Categories of rank $\le 12$}\label{rank12}
\begin{tabular}{*{4}{|c}|}
\hline
$(X_r,k)$ & Rank & Notes & $\ell$\\
\hline\hline
 $(A_r,1)$, $r\leq 11$ & $r+1$  & $r\ge 2$ NSD, abelian & $r+2$\\
\hline $(A_1,k)$, $k\leq 11$ & $k+1$ &  & $k+2$\\
\hline $(A_2,2)$ & $6$ & NSD & $5$\\
\hline $(A_2,3),(A_3,2)$ & $10$ & NSD & $6$\\
\hline $(A_r,k)_{\frac{1}{r+1}}$, $(r,k)\in L$& $\frac{1}{r+1}\binom{k+r}{k}$ & $r\ge 2$ NSD  & $k+r+1$
\\ \hline  $(B_r,1)$ & $3$ &  c.f. $(A_1,2)$ & $4r$
\\ \hline  $(B_r,2)$, $r\leq 8$ & $r+4$  & finite braid image? & $4r+2$
\\ \hline  $(B_2,3)$& $10$  &  & $12$
\\ \hline  $(C_r,1)$ $r\leq 11$ & $r+1$  & c.f. $(A_1,r)$ &  $2(r+2)$
\\ \hline  $(C_3,2)$ & $10$  & & $12$
\\ \hline  $(D_{2r},1)$ & $4$ & $r$ even c.f. $D^\omega(\Z_2)$ & $4r-1$
\\ \hline  $(D_{2r+1},1)$ & $4$ &  c.f. $(A_3,1)$ & $4r+1$
\\ \hline  $(D_r,2)$, $r=4,5$& $11,12$  & $r=5$ NSD & $8,10$
\\ \hline  $(E_6,k)$, $k=1,2$ & $3,9$ & NSD & $13,14$
\\ \hline  $(E_7,k)$, $1\leq k\leq 3$& $2,6,11$& & $19,20,21$
\\ \hline  $(E_8,k)$, $2\leq k\leq 4$& $3,5,10$ & & $32,33,34$
\\ \hline  $(F_4,k)$, $1\leq k\leq 3$& $2,5,9$ & & $20,22,24$
\\ \hline  $(G_2,k)$, $1\leq k\leq 5$& $2,4,6,9,12$ & & $15,18,21,24,27$
\\ \hline $F_4$ & $10$ & c.f. $(E_8,4)$ & $17$
\\ \hline $G_2$ & $5,8,10$ &  & $11,13,14$
\\ \hline $D^\omega(\Z_2)$ & $4$ & prime &
\\ \hline $D^\omega(\Z_3)$ & $9$ & prime &
\\ \hline $D^\omega(S_3)$ & $8$ & c.f. $(B_4,2)$ &
\\
\hline
\end{tabular}
\end{table}

\subsection{Classification}\label{count}

In this section, we explain Table \ref{rank4table}.  We identify
MTCs whose label sets differ by permutations.  For the trivial
MTCs, the two MTCs are distinguished by the $S$ matrices: $S=(\pm
1)$.

For the $\Z_2$ fusion rule, unitary MTCs are the semion MTC and
those from the two symmetries $S\rightarrow -S$ and complex
conjugate.

For the Fibonacci fusion rule, unitary MTCs are the Fibonacci MTC
and those from the two symmetries $S\rightarrow -S$ and complex
conjugate.

For the $\Z_3$ fusion rule, all unitary MTCs are the one listed in
last subsection and those from the two symmetries $S\rightarrow
-S$ and complex conjugate.

For the Ising fusion rule, there are a total of $16$ theories
divided into two groups according to the Frobenius-Schur indicator
of the non-abelian anyon $X,X^2=1+Y$.  There are $8$ unitary MTCs
with Frobenius-Schur indicator=$1$.  Their twists are given by
$\theta_X=e^{\frac{m\pi i}{8}}$ for $m=1,7,9,15$.  The Ising MTC
is the simplest one with $m=1$ and central charge $c=\frac{1}{2}$.
The theory $m=1,m=15$ are complex conjugate of each other, so are
the $m=7,9$.  The other $4$ MTCs are obtained by choosing $-S$.
There are $8$ unitary MTCs with Frobenius-Schur indicator=$-1$.
Their twists are $\theta_X=e^{\frac{m\pi i}{8}}$ for
$m=3,5,11,13$.  The $SU(2)$ at level $k=2$ is the simplest one
with $m=3$ and central charge $c=\frac{3}{2}$.  The MTCs $m=3$ and
$m=13$ are complex conjugate, so are $m=5,11$.  The other $4$ are
those with $-S$. The Ising MTC is not an $SU(2)$ theory.  It can,
however, be obtained as a quantum group category as the complex
conjugate of $E_8$ at level$=$2. Note that
the $F$ matrices in each group of $8$ are the same, but their
braidings are different.  The $SU(2)$ level=$2$ theory has
$F_{X}^{XXX}=-F_{\sigma}^{\sigma\sigma\sigma}$ with the other $F$
matrices the same as the Ising theory.

For the $(A_1,5)_{\frac{1}{2}}$ fusion rule, all unitary MTCs are
the one listed in last subsection and those from the two
symmetries $S\rightarrow -S$ and complex conjugate.

For the $\Z_2\times \Z_2$ fusion rules, there are two groups of
theories depending whether or not the theory is a product.  There
are $4$ theories which are not direct products, and $6$ product
theories.  The toric code MTC has another version, which could
also be called the toric code: it has $\theta_e=\theta_m=-1$. All
$F$ matrices are $1$.  The braidings
$R_{1}^{ee}=R_{1}^{mm}=R_{\e}^{em}=-1,R_{\e}^{me}=1$, and others
are the same as the toric code.  Another two are the $-S$
versions.  The product theories are the products of the semion MTC
and its complex conjugate.  There are $4$ possible theories, but
two of them are the same: semion $\times$ complex conjugate is the
same as complex conjugate $\times$ semion.  Hence there are $3$
theories here.  With the $-S$ versions, we have $6$ product
theories.

For the $\Z_4$ fusion rule, the Galois group action of the MTC
listed above is $\Z_4$.  Its actions give rise to $4$ theories
with $\theta_{X}=\theta_{X^*}=e^{\frac{m\pi i}{4}}$ for
$m=1,3,5,7$.  They all have the same $F$ matrices.  The $-S$
versions give a total of $8$.

For the $(A_1,3)$ fusion rule, this is the product of the semion
fusion rule with the Fibonacci fusion rule.  There are $4$ product
theories from semion, Fibonacci and their complex conjugates.
These $4$ theories are different, and the other $4$ come from
their $-S$ versions.  Let us choose the product of the semion with
the Fibonacci as a representative theory, then we have $4$ anyons,
$1,\varphi,\tau,s$, where $\tau$ is the Fibonacci anyon, and
$\varphi$ is the same as $\tau$ tensoring the semion $s$.

For the $(A_1,7)_{\frac{1}{2}}$ fusion rule, all unitary MTCs are
the one listed in last subsection and those from the two
symmetries $S\rightarrow -S$ and complex conjugate.

The analysis of the Fibonacci $\times$ Fibonacci fusion rule is
the same as that of the semion $\times$ semion fusion rule.

\section{Conjectures and Further Results}\label{conjecture}

In this section we briefly discuss several conjectures concerning the
structure and application of MTCs.

\subsection{Fusion Rules and the Finiteness Conjecture}

Since topological phases of matter are discrete in
the space of theories, therefore, MTCs,  encoding the universal properties of
topological phases of matter, should also be discrete.

 It is conjectured \cite{Wang}:
\begin{conj}
 If the rank of MTCs is fixed, then there are
only finitely many equivalence classes of MTCs.
\end{conj}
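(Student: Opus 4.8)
The plan is to run the scheme of Sections \ref{rank23} and \ref{rankIV} at arbitrary rank, replacing the case-by-case dissection of the image of $G$ in $S_n$ by uniform estimates. The first step is the reduction noted in the introduction: by Ocneanu rigidity a fixed set of fusion rules is realized by only finitely many fusion categories, each carrying only finitely many compatible braidings, hence only finitely many modular structures; and by the Verlinde formulas (\ref{verlindeformula}) a modular fusion rule is determined by its matrix $\tS$. So it suffices to show that for each $n$ only finitely many $n\times n$ matrices $\tS$ arise from a rank-$n$ modular datum. I would treat the unitary case first, so that $d_i\ge 1$; for the general case one argues instead with the Frobenius--Perron dimensions $\FPdim(X_i)\ge 1$.

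Second, I would package the constraints on such a $\tS$ in an $n$-free form. Every entry lies in the Galois field $K=\Q(\lambda_{ij})$ of the fusion rule, since $\tilde{s}_{i,j}=d_j\lambda_{i,j}$ and $d_j=\lambda_{j,0}\in K$, and each $\tilde{s}_{i,j}$ is an algebraic integer, being a product of eigenvalues of integral matrices. By Theorem \ref{deboergoereetheorem} the extension $K/\Q$ is abelian with group $G\hookrightarrow S_n$, so $[K:\Q]=|G|\le n!$. By Theorem \ref{Galoissymmetry}(3) each Galois conjugate of $\tilde{s}_{i,j}$ has the form $\e_{\sigma(j),\sigma}\,\tilde{s}_{i,\sigma(j)}/d_{\sigma(0)}$; since orthogonality of the columns of $\tS$ gives $\sum_i\tilde{s}_{i,\ell}^2=D^2$, hence $|\tilde{s}_{i,\ell}|\le D$ for all $\ell$, and since $d_{\sigma(0)}\ge 1$, every conjugate of every entry has absolute value at most $D$. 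An algebraic integer of degree $\le n!$ all of whose conjugates lie in the disc of radius $D$ belongs to a finite set of cardinality bounded in terms of $n$ and $D$ only; as $\tS$ has $n^2$ entries, there are then finitely many possible $\tS$ \emph{once $D$ is bounded in terms of $n$}. The twist inequalities of Theorem \ref{twistinequlity} enter exactly as in the low-rank cases, improving $|\tilde{s}_{i,j}|\le D$ when a diagonal entry is small, but they are not by themselves what caps $D$.

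Third, and this is the crux, one must bound $D$, equivalently $D^2=\sum_i d_i^2$, equivalently the conductor of $K$, which contains the order $N$ of the $T$-matrix by Vafa's theorem (Proposition \ref{modularprop}(2)) and Kronecker--Weber, by a function of $n$ alone. The elementary identities indicate the shape of the argument but do not close it: from the proof of Theorem \ref{Galoissymmetry} one has $\sigma(D^2)=D^2/d_{\sigma(0)}^2$, so the conjugates of $D^2$ all lie in $[1,D^2]$ (using $d_i^2\le D^2$), and the norm $\prod_{\sigma\in G}\sigma(D^2)=D^{2|G|}/\prod_{\sigma\in G}d_{\sigma(0)}^2$ is a rational integer $\ge 1$; but this is automatic, since the right-hand side already exceeds $1$ because $d_0=1$, and yields no bound. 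The genuinely new input required is arithmetic: a Cauchy-type theorem for modular categories, namely that the primes dividing $N$ are exactly those dividing $D^2$, together with the observation that $D^2$, being a totally real algebraic integer of degree at most $|G|$ with all conjugates in $[1,D^2]$, cannot carry unboundedly many prime factors. Granting this, $N$, hence the conductor of $K$, hence $K$ itself as a subfield of a fixed cyclotomic field, takes only finitely many values for fixed $n$, and the second step finishes the proof.

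I expect this arithmetic step to be the real obstacle; it is precisely the point the present paper leaves open. All the structural inputs (Ocneanu rigidity, the Verlinde formulas, the signed-permutation Galois action of Theorem \ref{Galoissymmetry}, Vafa's theorem, Kronecker--Weber, and the twist inequalities of Theorem \ref{twistinequlity}) are already available and reduce the conjecture to an effective bound on the order of the $T$-matrix; but extracting that bound requires number-theoretic control of modular data beyond what the $\tS$- and $T$-relations supply on their own, which is why the conjecture is verified here only through rank $4$.
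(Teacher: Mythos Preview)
The paper does not prove this statement; it is recorded there as an open conjecture. The only result the paper establishes in this direction is the Proposition immediately following it: for \emph{unitary} MTCs, a bound on $D$ yields finitely many equivalence classes. Your Steps~1 and~2 correctly reproduce and sharpen that Proposition (your height-of-algebraic-integers argument is a clean packaging of the same idea), so on those steps there is nothing to compare --- you and the paper agree, and both stop at the same place.

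Your Step~3 is where the proof would have to happen, and here there is a genuine gap beyond the one you already flag. You write that it suffices to bound ``$D$, equivalently $D^2=\sum_i d_i^2$, equivalently the conductor of $K$,'' and later that once $N$ is bounded, ``$K$ itself as a subfield of a fixed cyclotomic field takes only finitely many values for fixed $n$, and the second step finishes the proof.'' But these are \emph{not} equivalent: bounding the conductor pins down the field $K$, not the size of the particular algebraic integer $D^2\in K$. A fixed cyclotomic field contains algebraic integers of arbitrarily large absolute value, so knowing $K\subset\Q(\zeta_N)$ for bounded $N$ does not by itself feed back into Step~2, which needs $|\tilde s_{i,j}|\le D$ with $D$ bounded. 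Likewise, even granting a Cauchy theorem (primes dividing $N$ coincide with primes dividing $D^2$), your stated consequence --- that $D^2$ ``cannot carry unboundedly many prime factors'' because its conjugates lie in $[1,D^2]$ --- is circular: the interval depends on the very $D^2$ you are trying to bound, and in any case bounding the \emph{number} of prime divisors of $D^2$ does not bound $D^2$ or $N$. What is actually needed is an a priori bound on $D^2$ itself (for instance via a finiteness statement for the possible tuples $(d_0^2,\dots,d_{n-1}^2)$, as in later work on formal codegrees), and the Cauchy theorem then controls $N$ from $D^2$, not the other way around. So your outline correctly isolates the missing arithmetic input, but the logical chain you sketch from that input back to a bound on $D$ does not close.
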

 By Ocneanu rigidity, this is equivalent
to there are only finitely many modular fusion rules realizing by
MTCs of a fixed rank.

\begin{prop}

There are only finitely many equivalence classes of unitary MTCs
with total quantum order $D\leq  c$, where $c$ is any given
universal constant.

\end{prop}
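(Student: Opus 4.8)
The plan is to exploit the bound $D\le c$ to control \emph{both} the rank $n$ and all the fusion coefficients $n_{i,j}^k$, and then to invoke Ocneanu rigidity. For a \emph{unitary} modular symbol each $d_i$ is the Frobenius--Perron eigenvalue of $N_i$, so $d_i\ge 1$ for every $i$ (as noted after Definition~\ref{fusionrule}). Hence $D^2=\sum_{i=0}^{n-1}d_i^2\ge n$, and $D\le c$ forces $n\le c^2$; in particular only finitely many ranks occur.

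Next I would bound the fusion coefficients. Reading off column $0$ of the Verlinde identity $N_i\tS=\tS\Lambda_i$ (equivalently, evaluating the fusion rule $X_iX_j=\sum_k n_{i,j}^k X_k$ on quantum dimensions, using that column $0$ of $\tS$ is a common eigenvector of all the $N_i$) gives
\[
d_i d_j=\sum_{k=0}^{n-1} n_{i,j}^k\, d_k .
\]
Since all $n_{i,j}^k\ge 0$ and all $d_k\ge 1$, each summand obeys $n_{i,j}^k\le n_{i,j}^k d_k\le d_i d_j$; and $d_i^2\le D^2$ gives $d_i\le D\le c$. Therefore
\[
n_{i,j}^k\le d_i d_j\le D^2\le c^2\qquad\text{for all } i,j,k .
\]
Thus, for fixed $c$, there are at most $\lfloor c^2\rfloor$ admissible values of $n$, and for each of them the fusion matrices $(N_0,\dots,N_{n-1})$ form an $n^3$-tuple of integers in $\{0,1,\dots,\lfloor c^2\rfloor\}$; so only finitely many sets of fusion rules can underlie a unitary MTC with $D\le c$.

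Finally I would apply Ocneanu rigidity \cite{ENO}: each fixed set of fusion rules is realized by only finitely many fusion categories up to equivalence, and each such fusion category carries only finitely many compatible braidings (hence finitely many ribbon, and a fortiori modular, structures). Combining this with the finiteness of the possible fusion rules obtained above yields the proposition.

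The argument has no real obstacle beyond citing Ocneanu rigidity; its only substantive point is that an upper bound on $D$ produces \emph{a priori} bounds on the $n_{i,j}^k$. This is exactly the ingredient missing from the general finiteness conjecture, which fixes only the rank: there, bounding $n$ does not bound the fusion coefficients, which is why the conjecture remains open, while here the hypothesis $D\le c$ supplies precisely that bound.
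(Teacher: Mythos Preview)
Your proof is correct and follows the same three-step strategy as the paper: bound the rank via $n\le D^2\le c^2$, bound the fusion coefficients, and invoke Ocneanu rigidity. The one difference is in the middle step: the paper bounds $n_{i,j}^k$ by plugging directly into the full Verlinde formula~(\ref{verlindeformula}) and estimating $|n_{i,j}^k|\le D\sum_r 1/d_r\le nD\le c^3$, whereas you use only the column-$0$ eigenvector relation $d_id_j=\sum_k n_{i,j}^k d_k$ together with $d_k\ge 1$ to get the sharper and more elementary bound $n_{i,j}^k\le d_id_j\le D^2\le c^2$. Either bound is enough; yours is cleaner and avoids any estimate of the off-diagonal $\tilde s$-entries.
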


\begin{proof}

For a unitary rank=$n$ MTC, all quantum dimensions $d_r\geq 1,
r\in \mL$. So $D\geq \sqrt{n}$.  If $D\leq c$, then $n\leq c^2$.
By Verlinde formula \ref{verlindeformula}, we have
$n_{i,j}^k=|\sum_{r=0}^{n-1}
\frac{s_{ir}s_{jr}s^{*}_{kr}}{s_{0r}}|\leq
D\sum_{r=0}^{n-1}\frac{1}{d_r}\leq nD\leq c^3$ for any $i,j,k$.
Therefore, there are only finitely many possible fusion rules.  By
Ocneanu rigidity, there are only finitely many possible MTCs.

\end{proof}

\subsection{Topological Qubit Liquids and the Fault-tolerance Conjecture}

 Topological phases of matter are quantum liquids such as the
 electron liquids exhibiting the FQHE, whose topological
 properties emerged from microscopic degrees of freedom.  This
 inspires the following discussion.

 Let $\Delta$ be a triangulation of a closed surface $\Sigma$,
 $\Gamma_{\Delta}$ be its dual triangulation: vertices are centers
 of the triangles in $\Delta$, and two vertices are connected by
 an edge if and only if the corresponding triangles of $\Delta$
 share an edge.  The dual triangulation $\Gamma_{\Delta}$ of
 $\Delta$ is a celluation of $\Sigma$ whose $1$-skeleton is a
 tri-valent graph.  It is well-known that any two triangulations
 of the same surface $\Sigma$ can be transformed from one to the other by
 a finite sequence of two moves and their inverses: the
 subdivision of a triangle into $3$ new triangles; and
 the diagonal flip of two adjacent triangles that share an edge
 (=the diagonal).  Dualizing the triangulations into celluations, the two
 moves become the inflation of a vertex to a triangle and the $F$ move.

 \begin{definition}

 \begin{enumerate}

 \item Given an integer $k>0$, a $k$-local, or just local, qubit model on
 $(\Sigma,\Gamma_{\Delta})$ is a pair $(\mH_{\Delta},H_{\Delta})$,
 where $\mH_{\Delta}$ is the Hilbert space
 $\otimes_{e\in \Gamma_{\Delta}} \C^2$, and $H_{\Delta}$ is a $k$ local Hamiltonian
  in the following sense: $H_{\Delta}$ is a sum of Hermitian operators of the form
 $id \otimes \cdots \otimes id \otimes O_k \otimes id \otimes \cdots
 \otimes id$, where $O_k$ acts on $\leq k$ qubits.

 \item A modular functor $V$ is realized by a topological qubit
 liquid if there is a sequence of triangulations
 $\{\Delta_i\}_{i=1}^{\infty}$ of
 $\Sigma$ whose meshes $\rightarrow 0$ as $i\rightarrow \infty$, an integer $k$, and uniform
 local qubit models on $(\Sigma,\Gamma_{\Delta_i})$ such that

 (i) the groundstates manifold of each $H_{\Delta_i}$ is canonically isomorphic
 to the modular functor $V(\Sigma)$ as Hilbert spaces;

 (ii) the mapping class group acts as unitary transformations compatibly;

 (iii) there is a spectral gap in the following sense: if the
 eigenvalues of the Hamiltonians $H_{\Delta_i}$ are normalized such that $0=\lambda_0^i<
 \lambda_1^i < \cdots$, then $\lambda_1^i \geq c$ for all $i$,
 where $c>0$ is some universal constant.

 \end{enumerate}

 \end{definition}

 The scheme for the
 local qubit models should be independent
 of the geometry of the surface $\Sigma$, and have a uniform local description.
 The modular functor determines
 a unique topological inner product on $V(\Sigma)$.  We require
 that the restricted inner products from
 $\mH_{\Delta_i}$ to the groundstates of $H_{\Delta_i}$ agree with the topological inner product on $V(\Sigma)$.
 To identify the
 Hilbert space $\mH_{\Delta_i}$ of one triangulation with another,
 we consider the two basic moves: $F$ move and inflation of a vertex.
  The $F$ move does not change the number of qubits, so the two
  Hilbert spaces $\mH_{\Delta_i}$ have the same number of qubits.  We require that the
  identification to be an isometric.  For the inflation of a vertex,
  the inflated celluation has $3$ new qubits, so we need to choose a homothetic
  embedding with a universal homothecy constant.

The action of the mapping class group is defined as follows:
consider the moduli space of all triangulations of $\Sigma$ that
two triangulations are equivalent if there dual graphs
$\Gamma_{\Delta}$ are isomorphic as abstract graphs. By a sequence
of diagonal flips, we can realize a Dehn twist. Each diagonal flip
is an $F$ move,  and their composition is the unitary
transformation associated to the Dehn twist.

\begin{conj}

\begin{enumerate}

\item Every doubled MTC $\mC$ can be realized as a topological qubit liquid.

\item The groundstates $V(\Sigma)\cong H_{\Delta_i}\subset
\mH_{\Delta_i}$ form an error-correction code for each
triangulation $\Delta_i$.

\end{enumerate}

 \end{conj}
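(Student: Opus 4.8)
The natural route is the Turaev--Viro / Levin--Wen (string-net) construction. Write the doubled MTC as $\mC\simeq Z(\mathcal A)$ for a unitary spherical fusion category $\mathcal A$ (this is what ``doubled'' means; when $\mC$ is the double of a UMTC one may take $\mathcal A$ to be that UMTC). On the trivalent dual graph $\Gamma_{\Delta}$ one has the string-net Hilbert space --- edges labelled by simple objects of $\mathcal A$, vertices carrying the fusion-multiplicity spaces --- together with the Levin--Wen Hamiltonian $H_{\Delta}=\sum_v(1-Q_v)+\sum_p(1-B_p)$. The key structural fact is that the vertex projectors $Q_v$ and the plaquette projectors $B_p=\sum_s\frac{d_s}{\mathcal D^2}B_p^s$ are mutually commuting local Hermitian projectors (using sphericity and unitarity of $\mathcal A$); hence the spectrum of $H_{\Delta}$ is $\{0,1,2,\dots\}$ and the gap above the ground space is exactly $1$, which yields condition (iii) with a universal constant $c$ once the normalization of $H$ is fixed. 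The Turaev--Viro / doubled Reshetikhin--Turaev identification then identifies the ground space $\bigcap_v\im Q_v\cap\bigcap_p\im B_p$ canonically with $V(\Sigma)$, and the skein-theoretic pairing on string-nets is precisely the topological inner product, which settles the inner-product requirement in the definition.

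To meet the literal requirement of a \emph{qubit} liquid --- one $\C^2$ per edge of $\Gamma_{\Delta_i}$ --- one must encode the string-net qudits (dimension $|\mL|$) and the vertex-multiplicity indices into qubits. The plan is to fix a finite ``gadget'' pattern of $\C^2$'s whose size depends only on $\mathcal A$, refine each triangulation $\Delta_i$ so that every original edge and vertex of $\Gamma_{\Delta}$ is resolved into such a gadget, and rewrite $Q_v$ and $B_p$ as operators acting only on the $O(1)$ qubits of a gadget together with its neighbours; this makes $H_{\Delta_i}$ $k$-local with $k=k(\mathcal A)$ independent of $i$ and of $\Sigma$, while the ground-space dimension stays $\dim V(\Sigma)$ as the meshes tend to $0$. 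The $F$-move is realized as an isometry by the pentagon identity (invariance of the state sum under the $2$--$2$ Pachner move), vertex inflation by the $1$--$3$ move with a fixed homothecy constant (the isometric inclusion of the ``empty'' gadget), and the mapping class group acts through sequences of diagonal flips; compatibility of the resulting unitaries is the pentagon/hexagon consistency, i.e. Reshetikhin--Turaev functoriality. This would give part~(1).

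For part~(2), the commuting-projector structure together with topological order does the work: any operator supported in a region of $\Gamma_{\Delta_i}$ of diameter below a fixed fraction of the system size acts on the ground space as a scalar --- the standard ``no local order parameter'' statement for string-nets, which follows from the TQFT axioms by capping the region with a disk and using $V(D^2)\cong\C$. The Knill--Laflamme conditions then hold against all errors supported on such regions, so $V(\Sigma)\hookrightarrow\mH_{\Delta_i}$ is a quantum code whose distance grows like the inverse mesh.

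The main obstacles are twofold. The first is technical but genuine: producing a qubit gadget-encoding that is \emph{uniform} over all $\Delta_i$ and all $\Sigma$, preserves exact commutativity of the projectors (so that the $c=1$ gap survives intact rather than merely approximately), and respects the homothetic-embedding axiom --- a careful but finite design problem. The second is deeper: the assertion ``ground space $=V(\Sigma)$ with the correct mapping class group action'' for an \emph{arbitrary} doubled MTC presupposes the Turaev--Viro / doubled Reshetikhin--Turaev equivalence and control of the $\SL(2,\Z)$-representation in full generality, which is exactly why the statement is posed as a conjecture rather than a theorem. A realistic first step is to carry out the entire program explicitly for the abelian doubles $D(\Z_n)$ --- where it reduces to stabilizer-code arguments as in the toric code --- and then for the doubled Fibonacci theory, where every ingredient above is known and elementary.
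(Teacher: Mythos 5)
The statement you are trying to prove is posed in the paper as a \emph{conjecture}; the authors give no proof, and in an earlier draft they explicitly list the reasons it is not yet a theorem. Your sketch follows exactly the route they have in mind (Turaev--Viro/Levin--Wen string nets on the dual trivalent graph, commuting projectors $Q_v$, $B_p$, ground space $=V(\Sigma)$, mapping class group via $F$-moves), so as a \emph{program} it is the right one; but it is not a proof, and the gaps are precisely the ones the authors identify. First, the Levin--Wen Hamiltonian requires $6j$ symbols with full tetrahedral symmetry (including reflections); it is not known that every unitary fusion category $\mathcal A$ admits such a normalization --- the paper points to the rank-$3$ example of Hagge--Hong as a case where no normalization achieves all the symmetries --- so even writing down $H_\Delta$ for an arbitrary doubled MTC is not automatic. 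Second, your step ``the Turaev--Viro/doubled Reshetikhin--Turaev identification then identifies the ground space canonically with $V(\Sigma)$, with the correct mapping class group action'' is exactly the unproven content: at the level of generality needed (arbitrary unitary spherical $\mathcal A$, arbitrary $\Sigma$, compatibility of the $F$-move unitaries with the topological inner product and the projective MCG action) this equivalence had no published proof, which is why the statement is a conjecture rather than a corollary of \cite{LW}.

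Two further points internal to your argument. The qubit-gadget encoding is not merely ``a careful but finite design problem'': the definition demands a Hilbert space that is literally $\otimes_e\C^2$ with a $k$-local Hamiltonian whose gap is bounded below \emph{uniformly} in $i$ and whose ground space is \emph{exactly} (not approximately) isometric to $V(\Sigma)$ with the restricted inner product equal to the topological one; standard perturbative gadgets destroy exact commutation and only give approximate ground spaces, so you would need an exact, non-perturbative encoding, and no general construction of this kind is exhibited. For part (2), ``cap the region with a disk and use $V(D^2)\cong\C$'' presupposes the very identification of the ground space with the modular functor (and locality of that identification) that part (1) has not yet delivered, so the Knill--Laflamme verification cannot be decoupled from the unresolved part of (1). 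In short: your outline is the expected attack and correctly isolates the hard points, but none of the hard points is discharged, which is consistent with the paper leaving the statement as a conjecture.
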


\subsection{Topological Quantum Compiling and the Universality Conjecture}\label{compiling}

Every unitary MTC gives rise to anyonic models of quantum
computers as in \cite{FKLW}.  Quantum gates are realized by the
braiding matrices of anyons, i.e. the afforded representations of
the braid groups. Topological quantum compiling is the question of
realizing desired unitary transformations by braiding matrices in
quantum algorithms, in particular for those algorithms which are
first described in the quantum circuit model such as Shor's famous
factoring algorithm.

To choose a computational subspace, we will use the
so-called conformal block basis for the Hilbert space $V(D^2,a_i;a_{\infty})$ of a punctured disk, where $a_{\infty}$ labels
the boundary.  Conformal block basis is in one-one correspondence to admissible
labelings $m,n,\cdots, p$ of the internal edges of the following
graph subject to the fusion rules at each trivalent vertex.  As
explained in Section \ref{fmatrix}, the tri-valent vertices
also need to be indexed if multiplicities $n_{i,j}^k>1$.

\[ \xy
(0,10)*{}="A1"; (5,10)*{}="A2";(10,10)*{}="A3"; (25,10)*{}="Am";
(30,-10)*{}="AI";(5,6.6)*{}="M"; (10,3.3)*{}="N";
(20,-3.3)*{}="P"; (25,-6.6)*{}="I"; "A1";"AI" **\dir{-}; "A2";"M"
**\dir{-};"A3";"N" **\dir{-}; (20,10)*{}="P'"; "P'";"P" **\dir{-};
"Am";"I" **\dir{-}; (0,12)*{a_1}; (5,12)*{a_2};(10,12)*{a_3};
(30,-12)*{a_{\infty}}; (10,0)*{n};(5,3.3)*{m}; (20,-6.6)*{p};
(25,12)*{a_m};
\endxy \]

The braiding of two anyons $a_i,a_{i+1}$ in a conformal block
basis state is represented by the stacking the braid on top of the
above graph at $i,i+1$ positions.

\begin{definition}
An MTC $\CC$ has \emph{property F} if for every object $X$ in
$\CC$ and every $m$ the representation $\rho_X^m$ of $\B_m$ on
$V(D^2,X,\cdots,X;a_{\infty})$ factors over a finite group for any
$a_{\infty}\in \mL$.
\end{definition}

The following is conjectured by the first author (see \cite{propF}):
\begin{conj}
Let $\CC$ be an MTC.
\begin{enumerate}
\item[(a)] If $\CC$ is unitary, then it has property \emph{F} if
and only if $(d_i)^2\in\N$ for each simple object $X_i$ or,
equivalently, if and only if the global quantum dimension
$D^2\in\N$. \item[(b)] In general, $\CC$ has property \emph{F} if
and only if $(\FPdim(X_i))^2\in\N$ for each simple object $X_i$,
where $\FPdim$ is the Frobenius-Perron dimension, i.e. the
Frobenius-Perron eigenvalue of the fusion matrix $N_i$.
\end{enumerate}
\end{conj}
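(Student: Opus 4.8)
The conjecture is open in general; the plan is to verify it for the prime unitary MTCs of rank $\leq 4$ classified above by direct inspection of the braiding data compiled in Section \ref{explicitdata}, and to record the structural reduction that makes parts (a) and (b) essentially one statement. First observe that property $F$ is invariant under the entry-wise Galois action on a modular datum: a Galois automorphism fixes every fusion matrix $N_i$ and merely conjugates the $R$- and $F$-matrices, and a matrix has finite multiplicative order if and only if its Galois conjugate does; moreover $\FPdim(X_i)$ is the Frobenius-Perron eigenvalue of $N_i$, hence Galois invariant, and it equals $d_i$ in the unitary case. So it suffices to analyze, for each of the $8$ prime modular fusion rules of rank $\leq 4$, the images of the braid groups $\B_m$ on the conformal block spaces $V(D^2,X,\cdots,X;a_{\infty})$, with $d_i$ replaced by $\FPdim(X_i)$ in the non-unitary situation.

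For the implication ``$D^2\in\N\Rightarrow$ property $F$'' the relevant prime fusion rules are the four pointed ones ($\Z_2$, $\Z_3$, $\Z_4$, and the toric-code/$(D_4,1)$ rule with $\Z_2\times\Z_2$ fusion, all with $D^2\in\{2,3,4\}$) together with the Ising/$(A_1,2)$ rule ($D^2=4$). If $\CC$ is pointed then $X^{\otimes m}$ is invertible for every simple $X$, so each $\Hom(X^{\otimes m},a_{\infty})$ is at most one-dimensional and every $\rho_X^m$ takes values in roots of unity, hence has finite cyclic image — this disposes of the four pointed cases at once. For the Ising/$(A_1,2)$ rule the only nontrivial braid representations sit on $\Hom(\sigma^{\otimes m},a_{\infty})$, and from the explicit $R$-matrices and $F^{\sigma,\sigma,\sigma}_{\sigma}=\pm\frac{1}{\sqrt{2}}\begin{pmatrix}1&1\\1&-1\end{pmatrix}$ of Section \ref{explicitdata} one checks that each braid generator has order dividing $16$ and that a suitable product of consecutive generators is central, exhibiting the image as a quotient of a fixed finite group; this is the classical finiteness of the Jones braid representation at a primitive $16$th root of unity.

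For the converse the prime fusion rules with $D^2\notin\N$ are the Fibonacci rule $(A_1,3)_{\frac{1}{2}}$, $(A_1,5)_{\frac{1}{2}}$, and $(A_1,7)_{\frac{1}{2}}$, and one must exhibit $X$ and $m$ with $\rho_X^m$ of infinite image. The approach is to prove the stronger statement that the closure of the image of $\B_m$ in the projective unitary group of $V(D^2,X,\cdots,X;a_{\infty})$ is open for $m$ large: for the Fibonacci rule this is the density theorem of Freedman, Larsen and Wang, and for $(A_1,5)_{\frac{1}{2}}$ and $(A_1,7)_{\frac{1}{2}}$ one argues similarly, reading off from Section \ref{explicitdata} the relevant eigenvalue ratios of the braidings $R^{ab}_c$, checking they are not roots of unity, and then invoking the Lie-theoretic classification of the possible closures of such braid images to rule out every proper possibility. \textbf{The main obstacle} is precisely this last step in general: there is no uniform mechanism forcing infinite braid image from $(d_i)^2\notin\N$, so one is thrown back on a case analysis of the topological closure of the braid image — which is why the statement is a conjecture rather than a theorem. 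A secondary technical point is the bookkeeping in the first implication: for each weakly integral case one must verify that the braid generators satisfy enough relations to cut out a finite quotient, not merely that each has finite order.
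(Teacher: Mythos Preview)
Your proposal is correct and matches the paper's approach closely: the paper likewise verifies the conjecture only for the prime UMTCs of rank $\leq 4$, handling the pointed cases by one-dimensionality, the Ising/$(A_1,2)$ case by the classical Jones finiteness, and the remaining three cases by appealing to density results. One minor difference worth noting: for the infinite-image direction in $(A_1,2t+1)_{\frac{1}{2}}$, the paper exploits the product decomposition $(A_1,2t+1)\cong(A_1,2t+1)_{\frac{1}{2}}\times(A_1,1)$ (up to complex conjugation of factors) to identify a generating object in the half-category whose braid image coincides with that of the fundamental object in the full $(A_1,2t+1)$, where the FLW density theorem applies directly --- this sidesteps having to redo the eigenvalue and irreducibility analysis inside the half-category, though the paper does carry out exactly that analysis separately in Theorem~\ref{universalanyon} for the anyons $\beta$, $\omega$, $\rho$. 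Your Galois-invariance remark for handling part (b) is a clean addition not spelled out in the paper.
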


The verification of this conjecture for UMTCs of rank$\leq 4$ is summarized in Table \ref{rank4}.

\begin{table}\caption{Unitary prime MTCs rank$\leq$4}\label{rank4}
\begin{tabular}{|c|p{4cm}|c|c|}
\hline
Realization & $\PSL(2,\Z)$, Relations & Property $F$?& Universal Anyons\\
\hline\hline
$\Vect_\C$ & $1$, $S=T=1$ &Yes& \\
\hline
 $(A_1,1)$ &\raggedright $\PSL(2,3)$, $T^4=I$ &Yes &\\
\hline $(A_1,3)_{\frac{1}{2}}$ & $\PSL(2,5)$, $T^5=I$ &No & $\tau$\\
\hline $(A_2,1)$ & $\PSL(2,3)$, $T^3=I$ &Yes &\\
\hline $(A_1,2)$ &\raggedright $\PSL(2,8)$, \\$T^{16}=(T^2S T)^3=I$ &Yes &\\
\hline $(A_1,5)_{\frac{1}{2}}$ &\raggedright $\PSL(2,7)$,\\ $T^7=(T^4S T^4S)^2=I$ &No & $\alpha$,$\beta$\\
\hline $(A_3,1)$ &\raggedright $\PSL(2,8)$, \\$T^8=(T^2S T)^3=I$ &Yes &\\
\hline $D(\Z_2)$ & $\PSL(2,2)$, $T^2=I$ &Yes &\\
\hline $(A_1,7)_{\frac{1}{2}}$ &\raggedright $\PSL(2,9)$, \\$T^9=(T^4S T^5S)^2=I$ &No &$\alpha,\omega,\rho$\\
\hline
\end{tabular}
\end{table}

\begin{theorem}\label{universalanyon}

The following anyons are universal in the sense of \cite{FKLW}:

the Fibonacci anyon $\tau$, the $(A_1,5)_{\frac{1}{2}}$ anyons
$\alpha,\beta$, the $(A_1,7)_{\frac{1}{2}}$ anyons
$\alpha,\omega,\rho$, the two anyons $\varphi, \tau$ in $(A_1,3)$
(see \ref{count} for notation), and the two $\tau$'s in $Fib\times
Fib$.

Universal anyonic quantum computation can also be achieved with
the anyon $\tau \times \tau$ in $Fib\times Fib$, but images of the
representations of the braid groups from this anyon are not as
large as possible.

Anyons that correspond to $\varphi, \tau,\alpha,\beta,\omega,\rho$
in other versions are also universal.

\end{theorem}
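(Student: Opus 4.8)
The plan is to reduce every assertion to the Freedman--Larsen--Wang density criterion: an anyon $X$ in a UMTC $\CC$ is universal in the sense of \cite{FKLW} precisely when, for each total charge $a_\infty\in\mL$ and all sufficiently large $n$, the image $\rho_X^n(\B_n)$ of the braid group acting on the conformal block space $V(D^2,X,\dots,X;a_\infty)$ is dense in the projective unitary group of each irreducible constituent. After recalling this criterion I would therefore only need to establish projective density of finitely many explicit families of braid representations.

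First, for the Fibonacci anyon $\tau$ this projective density is exactly the theorem of Freedman--Larsen--Wang \cite{FLW},\cite{FLWu}; the same input immediately settles the anyon $\tau$ inside $(A_1,3)$ and the two simple objects $(\tau,\mathbf 1)$, $(\mathbf 1,\tau)$ of $Fib\times Fib$, each of which acts as $\rho_\tau^n$ on one tensor factor of the (product) block space and trivially on the other. Next I would exploit the decompositions (\ref{decom1}) and (\ref{decom2}): each of $(A_1,3)$, $(A_1,5)_{\frac{1}{2}}$, $(A_1,7)_{\frac{1}{2}}$ differs from $\SU(2)_k=(A_1,k)$ (with $k=3,5,7$) only by tensoring with a semion, which, being abelian, contributes at most a one-dimensional block on which each braid generator acts by a scalar. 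Hence for an object $Y$ of the half-integer category $\rho_Y^n$ agrees projectively with $\rho_{(Y,s)}^n$ in $\SU(2)_k$, and matching quantum dimensions identifies the spin-$\tfrac12$ object of $\SU(2)_k$ with $\varphi$ when $k=3$ and with $\alpha$ when $k=5,7$. The two-eigenvalue theorem of \cite{FLW} applied to the Jones representations at levels $3,5,7$ then yields density, so $\varphi\in(A_1,3)$ and $\alpha\in(A_1,5)_{\frac{1}{2}},(A_1,7)_{\frac{1}{2}}$ are universal.

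The delicate step is $\beta$ in $(A_1,5)_{\frac{1}{2}}$ and $\omega,\rho$ in $(A_1,7)_{\frac{1}{2}}$. These generate their categories and occur in low tensor powers of the universal generator $\alpha$, namely $\beta\subset\alpha^{\otimes 2}$, and $\omega\subset\alpha^{\otimes 2}$, $\rho\subset\alpha^{\otimes 3}$. By naturality of the braiding the image of the cabled braid group on a $Y^{\otimes n}$ sector embeds in the dense image on an $\alpha^{\otimes kn}$ sector, but as noted in the discussion preceding the theorem this only gives \emph{infinitude} of $\rho_Y^n(\B_n)$, not density; upgrading infinitude to density is where the real work lies. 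I would carry this out by reading off the two (resp.\ three) eigenvalues of the $R$-matrix of $Y$ from the explicit braiding data in Section \ref{explicitdata}, and feeding these eigenvalues, together with the irreducibility of the relevant block spaces, into the $N$-eigenvalue refinement of the Freedman--Larsen--Wang criterion --- or, equivalently, by identifying $\beta,\omega,\rho$ with the appropriate higher-spin objects of $\SU(2)_5$ and $\SU(2)_7$ as in the previous paragraph and invoking the known universality of those. This is the step I expect to be the main obstacle.

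Finally, for the diagonal anyon $\tau\times\tau=(\tau,\tau)$ in $Fib\times Fib$ one has $\rho_{\tau\times\tau}^n=\rho_\tau^n\otimes\rho_\tau^n$, so its image is the graph $\{(\rho_\tau^n(b),\rho_\tau^n(b)):b\in\B_n\}$, whose closure is a proper twisted-diagonal closed subgroup of $\mathrm{PSU}\times\mathrm{PSU}$ --- which is exactly why the image is ``not as large as possible'' --- yet it still projects onto a dense subgroup of each factor, so encoding the logical qubits into a single Fibonacci copy gives universal quantum computation. To finish, every other UMTC realizing one of these fusion rules is obtained from the representative above by $S\mapsto -S$ (tensoring with the trivial theory $S=(-1)$, which leaves all $F$-matrices and braidings, hence all $\rho_X^n$, unchanged) or by complex conjugation $(S,T)\mapsto(S^\dagger,T^\dagger)$ (which replaces each $\rho_X^n$ by its entrywise conjugate $\overline{\rho_X^n}$); projective density is invariant under both operations, so the anyons corresponding to $\varphi,\tau,\alpha,\beta,\omega,\rho$ in all versions are universal, which would complete the proof.
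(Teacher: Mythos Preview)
Your outline is essentially the paper's own proof: cite \cite{FLW} for the Fibonacci/fundamental cases, reduce the $\alpha$'s to spin-$\tfrac12$ in $\SU(2)_k$ via the product decompositions (\ref{decom1})--(\ref{decom2}) (up to the harmless complex conjugation you handle at the end), and then treat $\beta,\omega,\rho$ by the $N$-eigenvalue machinery of \cite{LRW}. Your discussion of $\tau\times\tau$ and of the invariance under $S\mapsto -S$ and complex conjugation is actually more explicit than what the paper writes.

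The one genuine gap is the irreducibility input for $\beta,\omega,\rho$. You list it as a hypothesis to feed into \cite{LRW} but give no mechanism for verifying it, and your fallback --- ``invoking the known universality'' of the higher-spin objects in $\SU(2)_5,\SU(2)_7$ --- is circular, since that universality is not in \cite{FLW} (which is the two-eigenvalue case only) and again needs irreducibility plus \cite{LRW}. The paper closes this gap by observing that the explicit braiding eigenvalues $R^{YY}_c$ listed in Section~\ref{explicitdata} satisfy the hypotheses of \cite[Lemma~5.5]{TuWe} and \cite[Proposition~6.1]{HRW}, which give irreducibility of the braid representations directly from eigenvalue conditions. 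With that single citation inserted, your argument is complete.
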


\begin{proof}

We deduce the proof from \cite{FLW}\cite{LRW}\cite{LWa}.

Universality of $\varphi$ and $\tau$ is given in \cite{FLW}.  The anyons $\alpha$ are both the
fundamental representations of $(A_1,k)$ up to abelian anyons. The
universality of fundamental representation anyons are established in \cite{FLW}.
Therefore, both $\alpha$'s are universal.

To prove that $\beta, \omega,\rho$ are universal, we first show
that their braid representations are irreducible.  By inspecting
the braiding eigenvalues in Section \ref{explicitdata}, we see
that they satisfy the conditions of \cite{TuWe}[Lemma 5.5]
\cite{HRW}[Proposition 6.1].  It follows that the braid
representations are irreducible.  Universality now can be proved
following \cite{FLW} or \cite{LRW}.

\end{proof}

\appendix

\section{Non-self dual rank$\leq 4$ MTCs with S.
Belinschi}\label{nonselfdual}

Every rank=$1,2$ MTC is self-dual, so we will start with rank=$3$.

\subsection{Nonselfdual Rank=3}

The three labels will be $0,1,2$ such that $\hat{0}=0,\hat{1}=2,
\hat{2}=1$.  The modular $\tilde{S}$ matrix is of the form:
\[
 \begin{pmatrix}
   1& d&d \\
   d& x &\bar{x}\\
   d&\bar{x} &x
 \end{pmatrix}.
 \]
 $\tilde{s}_{22}=\tilde{s}_{11},\ts_{12}=\overline{\ts_{11}}$ follows from
 $\tilde{s}_{\hat{i},j}=\overline{\ts_{i,j}}$.
 Unitarity of $S$ implies
 \begin{equation}\label{orth1}
 1+d^2=2|x|^2,
 \end{equation}
 \begin{equation}\label{orth2}
 d^2+x^2+\bar{x}^2=0,
 \end{equation}
 \begin{equation}\label{orth3}
 1+x+\bar{x}=0.
 \end{equation}

 The fusion matrix $N_1$ has eigenvalues
 $d,\frac{x}{d},\frac{\bar{x}}{d}$.  Their sum $d+\frac{x+\bar{x}}{d}=d- \frac{1}{d}$ is an
 integer.  Their product $\frac{|x|^2}{d}=\frac{1+d^2}{2d}=\frac{1}{2}(\frac{1}{d}+d)$ is also an integer.
 Therefore, $d$ is an integral multiple of $\frac{1}{2}$, so
 $d$ is an integer.

 Let $\theta$ be the twist of label $1$, hence of label $2$.  Using identity
 (\ref{balance}), we get
 \begin{equation}
 1-2d^2+\theta+\theta^{-1}=0.
 \end{equation}
 Therefore, $2d^2\leq 3$.  Since $d\neq 0$, the only possible
 integers are
$d^2=1$, hence $|x|=1$.  Then $1+x+\bar{x}=0$ leads to $x=e^{\pm
\frac{2\pi i}{3}}.$

\subsection{Nonselfdual rank=4}

Now we turn to the non-self dual rank=$4$ case.  The $4$ labels
will be denoted as $1,Y,X,X^*$, where $Y$ is self dual and $X,X^*$
dual to each other.  Taking into account of all symmetries among
$n_{i,j}^k$, we can write the non-trivial fusion matrices as:

$
 N_Y=\begin{pmatrix}
   0&1& 0&0 \\
   1&n_1& n_2 &n_2\\
   0 &n_2&n_3&n_4\\
   0 &n_2&n_4&n_3
 \end{pmatrix}$;

$N_X=\begin{pmatrix}
   0&0& 1&0 \\
   0&n_2& n_3 &n_4\\
   0 &n_4&n_5&n_6\\
   1 &n_3&n_7&n_7
 \end{pmatrix}$;

 $N_{X^*}=\begin{pmatrix}
   0&0& 0&1 \\
   0&n_2& n_4 &n_3\\
   1 &n_3&n_7&n_7\\
   0 &n_4&n_6&n_5
 \end{pmatrix}$.

 The modular $\tS$ matrix is of the form:

$ \tS=\begin{pmatrix}
   1&d_1& d_2&d_2 \\
   d_1&x& y &y\\
   d_2& y& z&\bar{z}\\
   d_2 &y &\bar{z}& z
 \end{pmatrix},$ where $x,y$ are real, and $z$ is not real.

We will work on unitary modular symbols, so $d_1\geq 1, d_2\geq 1$.
The argument for general case should have only minor changes.

 The identity $N_XN_Y=N_YN_X$ leads to the identities:
 \begin{equation}
 1+n_1n_3+n_2(n_5+n_7)=n_2^2+n_3^2+n_4^2,
 \end{equation}
  \begin{equation}
 n_1n_4+n_2(n_6+n_7)=n_2^2+2n_3n_4,
 \end{equation} \begin{equation}
 n_1n_4+n_2(n_5+n_6)=n_2^2+2n_3n_4.
 \end{equation}
 $N_YN_{X^*}=N_{X^*}N_Y$ gives no new identities.  But
 $N_XN_{X^*}=N_{X^*}N_X$ gives us:
  \begin{equation}\label{n2n4dependence}
 n_2n_4+n_4n_6=n_2n_3+n_4n_5,
 \end{equation}
 \begin{equation}
 n_5=n_7,
 \end{equation} \begin{equation}
 n_4^2+n_6^2=1+n_3^2+n_7^2.
 \end{equation}

 Case 1: $n_4=0$.

 If $n_4=0$, then $n_2n_3=0$.  First if $n_2=0$, then
 $1+n_1n_3=n_3^2$ which implies $n_3=1,n_1=0$.  It follows that
 $n_1=n_2=n_4=0,n_3=1$.  This leads to $n_6^2=2+n_7^2$ which has
 no solutions.  Secondly if $n_3=0$, then $n_6^2=1+n_7^2$ which
 implies $n_6=1,n_7=0$.  Hence $n_3=n_4=n_5=n_7=0, n_6=1$.  This
 leads to $n_2=1$, and $n_1$ is arbitrary.
To rule out this case, notice that the labels $1,X,X^*$
 have exactly the same fusion rules as the rank=$3$ non-self dual
 theory.  Therefore, it is a pre-modular category with the same
 fusion rules, which is necessarily modular by \cite{Br}:
 Suppose otherwise, then $(d_2,z,\bar{z})$ would be a
 $d_2$ times $(1,d_2,d_2)$ as vectors, contradicting $z$ is not real.
 It follows $d_2=1, z=\omega$ for some $\omega^3=1$.  Comparing
 the squared lengths of row $1$ and row $3$ of the $\tS$ matrix, we see that $y^2=d_1^2$.  Also
 note that $d_1^2=3+n_1d_1$.  Equality of the squared lengths of row
 $1$ and row $2$ implies $x^2+2d_1^2=3$.  Since $x$ is real, this
 does not hold if $d_1>0$.

 Case 2: $n_4\neq 0$.

 If $n_2=0$, then $n_1=2n_3, 1+n_1n_3=n_3^2+n_4^2.$  Hence
 $1+n_3^2=n_4^2$ which implies $n_4=1,n_3=0$.  So we have
 $n_1=n_2=n_3=0, n_4=1,n_5=n_6=n_7$.  The labels $1,Y$ form a
 subcategory same as the $\Z_2$ theory, hence $d_1^2=1,x^2=1$.
 If $x=-1$, then $y=0$, and $d_1d_2=0$ which is a contradiction.
 If $x=1$, then $y^2=d_2^2$.  So $d_2^2=d_1^2=1$.
 Using $d_2^2=1+n_3d_1+2n_5d_2$ below, we see that $n_3,n_5$, hence
 $n_6=n_7=0$.  So we have $n_1=n_2=n_3=n_5=n_6=n_7=0, n_4=1$,
 which is the $\Z_4$ fusion rule.

 Suppose $n_4\neq 0, n_2\neq 0$.

 The fusion rules in Table \ref{rank4} gives us the following identities:
 \begin{equation}
 d_1^2=1+n_1d_1+2n_2d_2,
 \end{equation}
 \begin{equation}\label{d1d2product}
 d_1d_2=n_2d_1+(n_3+n_4)d_2,
 \end{equation}
 \begin{equation}\label{d2square}
 d_2^2=n_4d_1+(n_5+n_6)d_2,
 \end{equation}
 \begin{equation}
 d_2^2=1+n_3d_1+2n_5d_2.
 \end{equation}

 Combining equations, we have
 \begin{equation}\label{d1d2dependence}
 (n_4-n_3)d_1+(n_6-n_5)d_2=1.
 \end{equation}

 If $n_4=n_3$, then $n_6=1+n_7^2$ which implies $n_6=1,n_7=0$.
 Hence $n_5=0$.  By equation (\ref{d1d2dependence}), $n_5=n_6$ which is a contradiction.

 If $n_5=n_6$, then $n_4^2=1+n_3^2$ which implies $n_3=0,n_4=1$.
 Solving all equations, we get $n_1=n_2=n_3=0,n_4=1,n_5=n_6=n_7$,
 which is the $\Z_4$ fusion rule.

 So we may assume from now on $n_2\neq 0, n_4\neq 0, n_4\neq n_3,
 n_5\neq n_6$.
 By equation (\ref{n2n4dependence}), we have
 \begin{equation}
 n_4(n_5-n_6)=n_2(n_4-n_3).
 \end{equation}
 Hence we have
 \begin{equation}
 d_2=\frac{n_4}{n_2}d_1-\frac{n_4}{n_2(n_4-n_3)}.
 \end{equation}

Plugging into (\ref{d1d2dependence}) and simplifying, we have
\begin{equation}\label{d1square}
d_1^2=(n_1+2n_4)d_1-\frac{n_3}{n_4-n_3}.
\end{equation}

The orthogonality of $\tS$ gives us:

\begin{equation}\label{xysquare}
x^2+2y^2=1+2d_2^2,
\end{equation}
\begin{equation}
y^2+2|z|^2=1+d_1^2+d_2^2,
\end{equation}
\begin{equation}\label{xd}
(1+x)d_1+2yd_2=0,
\end{equation}
\begin{equation}\label{d1d2z}
yd_1+(1+z+\bar{z})d_2=0,
\end{equation}
\begin{equation}\label{zz}
d_1d_2+(x+z+\bar{z})y=0,
\end{equation}
\begin{equation}
d_2^2+y^2+z^2+\bar{z}^2=0.
\end{equation}

Note that $y$ cannot be $0$.  Suppose otherwise, then $x=-1$, so $d_2=0$, a contradiction.

The eigenvalues of $N_Y$ are $d_1,
\frac{x}{d_1},\frac{y}{d_2},\frac{y}{d_2}$.  Their sum
$d_1+\frac{x}{d_1}-\frac{(1+x)d_1}{d_2^2}=d_1-\frac{d_1}{d_2^2}+(\frac{1}{d_1}-\frac{d_1}{d_2^2})x$
is an integer. The eigenvalues of $N_X$ are $d_2,
\frac{y}{d_1},\frac{z}{d_2},\frac{\bar{z}}{d_2}$.  Their sum
$d_2+\frac{y}{d_1}+\frac{z+\bar{z}}{d_2}$ is an integer.

If $\frac{1}{d_1}-\frac{d_1}{d_2^2}= 0$, then $d_1^2=d_2^2$. By
equation (\ref{d1d2product}), $\pm d_1=n_2+n_3+n_4$, then $d_1,d_2$
are integers. But the sum of the eigenvalues of $N_Y$
$d_1-\frac{1}{d_1}$ is also an integer, so $d_1=\pm 1$. It follows
that $\pm 1=n_2+n_3+n_4$, but $n_2,n_4$ are both $\neq 0$, a
contradiction.

If $\frac{1}{d_1}-\frac{d_1}{d_2^2}\neq 0$, then $x$ and
subsequently all $y,z+\bar{z},|z|^2$ are in $\Q(d_1,d_2)$.
So all $x,y,z+\bar{z},|z|^2,z^2+\bar{z}^2$ are in $\Q(d_1,d_2)$.
By equation \ref{d1d2dependence}, $\Q(d_1,d_2)$ is a degree$\leq 2$ Galois extension of $\Q$.
Therefore, the Galois group of the characteristic polynomial
$p_1(t)$ of $N_Y$ is either trivial or $\Z_2$.  If it is trivial,
then all eigenvalues $d_1,
\frac{x}{d_1},\frac{y}{d_2},\frac{y}{d_2}$ and $d_2$ are integers.
So $d_1,d_2,x,y$ are all integers.

From the unitary assumption, $d_1, d_2\geq 1$.  Since $\frac{x}{d_1},
\frac{y}{d_2}$ are integers, $|x|\geq d_1,|y|\geq d_2$.  Equation (\ref{xysquare})
implies $x=\pm 1, y=\pm d_2$.  Since $\frac{x}{d_1}$ is an integer, $|x|=d_1=1$.  Then
$2yd_2=-2$ implies $d_2=1$,
contradicting $\frac{1}{d_1}-\frac{d_1}{d_2^2}\neq 0$.

Therefore the Galois group of $p_1(t)$ is $\Z_2$.  Since $p_1(t)$
has a pair of repeated roots, then $p_1(t)$ is $(t-m)^2 q_1(t)$
for some irreducible quadratics $q_1(t)$ and integer $m$ or
$(q_1(t))^2$.  Assume $q_1(t)=t^2+bt+c$, where $b,c$ are integers.
 Note that $d_1$ has to be an irrational root of $p_1(t)$.
If $p_1(t)$ has integral roots $m$, then $\frac{y}{d_2}=m$, so
$y^2\geq d_2^2$.  $x=d_1\frac{x}{d_1}=c$ implies $|x|\geq 1$.  By equation (\ref{xysquare}), $y^2\geq d_2^2$ implies
$x^2\leq 1$, hence
$|x|=1,y^2=d_2^2$.  It follows from equation (\ref{xd}) that $d_1=d_2^2$.
By equation (\ref{d2square}), $(n_4-1)d_1+(n_5+n_6)d_2=0$.  Since $n_4\geq 1$, it follows that
$n_4=1,n_5=n_6=0$, contradicting $n_5\neq n_6$.

Hence $p_1(t)=q_1(t)^2$, and $d_1=\frac{x}{d_1}$, i.e.
$x=d_1^2\geq 1$, and $y^2\leq d_2^2$. So the roots of $p_1(t)$ are
$d_1,d_1,\frac{y}{d_2},\frac{y}{d_2}$. Then $d_1+\frac{y}{d_2}$
and $\frac{d_1y}{d_2}$ are both integers. By equations
\ref{d1d2z},\ref{zz},
$\frac{d_1d_2}{y}+x=-(z+\bar{z})=\frac{yd_1}{d_2}+1$ is an
integer.  On the other hand,
$\frac{d_1d_2}{y}+x=x(\frac{d_2}{d_1y}+1)$, so $x=d_1^2$ would be
a rational number $s$ if $\frac{d_2}{d_1y}+1\neq 0$. Then
$d_1=\sqrt{s}$, which is also $\frac{-b\pm \sqrt{b^2-4c}}{2}$, but
not a rational number, hence $b=0$, a contradiction.  If
$\frac{d_2}{d_1y}+1=0$, then $y=-\frac{d_2}{d_1}$.  Substituting
this and $x=d_1^2$ into equation (\ref{xd}), we get
$d_1^2=2\frac{d_2^2}{d_1^2}-1$.  By equation (\ref{d1d2dependence}),
$\frac{d_2}{d_1}\in \Q$, hence $d_1^2$ would be a rational number
$s$ again, a contradiction.

Putting everything together, we have the only desired modular $\tS$ matrix.


\begin{thebibliography}{AA}



\bibitem[Ba]{Bantay1}P.\ Bantay, \emph{The Frobenius-Schur indicator in conformal field theory}. Phys.
Lett. B 394 (1997), no. 1-2, 87--88.





\bibitem[BM]{BM}D.\ Belov; G.\ Moore, \emph{Classification of spin abelian Chern-Simons theories},
hep-th/0505235.

\bibitem[BK]{BK} B.\ Bakalov; A.\ Kirillov, Jr., {\em Lectures on
Tensor Categories and Modular Functors}, University Lecture
Series, vol.\ {\bf 21},  Amer.\ Math.\ Soc., 2001.

\bibitem[Br]{Br} A.\ Brugui\`eres, \textit{Cat\'egories pr\'emodulaires, modularisations et invariants des vari\'et\'es de dimension 3.}
(French)  Math. Ann.  316  (2000),  no. 2, 215--236.

\bibitem[CG]{CG}A.\ Coste; T.\ Gannon, \emph{Remarks on Galois symmetry in rational conformal field theories}.
Phys. Lett. B 323 (1994), no. 3-4, 316--321.

\bibitem[CP]{CP}M.\ Caselle; G.\ Ponzano, \emph{Analyticity, modular invariance and the classification of three operator fusion algebras}.
Phys. Lett. B 242 (1990), no. 1, 52--58.

\bibitem[CZ]{CZ}M.\ Cuntz, Integral modular data and congruences, math/0611233.


\bibitem[dBG]{dBG} J.\ de Boer; J.\ Goeree, \textit{Markov traces and ${\rm II}\sb 1$ factors in conformal field theory. } Comm. Math. Phys.  139  (1991),  no. 2, 267--304.


\bibitem[DFNSS]{DFN}S.\ Das Sarma; M.\ Freedman; C.\ Nayak; S.\ H.\ Simon; A.\ Stern,
\emph{Non-Abelian Anyons and Topological Quantum Computation,}
arXiv:0707.1889.


\bibitem[DW]{DW}R.\ Dijkgraaf; E.\ Witten, \emph{Topological gauge theories and group cohomology.}
 Comm. Math. Phys. 129 (1990), no. 2, 393--429.



\bibitem[ENO]{ENO}P.\ Etingof; D.\ Nikshych; V.\ Ostrik,  \emph{On fusion
categories}. Ann. of Math. (2) 162 (2005), no. 2, 581--642.


\bibitem[EK]{Evans}D.\ Evans; Y.\ Kawahigashi, Quantum symmetries on operator algebras.
Oxford Mathematical Monographs. Oxford Science Publications.
The Clarendon Press, Oxford University Press, New York, 1998.

\bibitem[F]{Freedman1}M.\ H. \ Freedman, \emph{P/NP, and the quantum field computer}.
Proc. Natl. Acad. Sci. USA 95 (1998), no. 1, 98--101.




\bibitem[FKLW]{FKLW}M.\ Freedman; A.\ Kitaev; M.\ Larsen; Z.\ Wang,
\emph{Topological quantum computation.}
Bull. Amer. Math. Soc.
(N.S.) 40 (2003), no. 1, 31--38.

\bibitem[FKW]{FKW}M.\ H.\ Freedman; A.\ Kitaev; Z.\ Wang,
\emph{Simulation of topological field theories by quantum
computers.} Comm. Math. Phys. 227 (2002), no. 3, 587--603.

\bibitem[FLW1]{FLWu} M.\ H.\ Freedman; M.\ J.\ Larsen; Z.\ Wang,
    \emph{A modular functor which is universal for quantum computation.} Comm. Math. Phys. 227 (2002), no. 3, 605--622.

\bibitem[FLW2]{FLW} M.\ H.\ Freedman; M.\ J.\ Larsen; Z.\ Wang,
    \emph{The two-eigenvalue problem and density of Jones representation of braid groups.}
    Comm.\ Math.\ Phys.\ {228} (2002), 177-199.

\bibitem[FW]{FW}M.\  H. \ Freedman; Z.\ Wang, \emph{Large quantum Fourier transforms are never exactly realized by braiding conformal blocks}.
 Phys. Rev. A (3) 75 (2007), no. 3, 032322, 5 pp.


\bibitem[FQ]{FQ}D.\ Freed; F.\ Quinn, \emph{Chern-Simons theory with finite gauge group.}
 Comm. Math. Phys. 156 (1993), no. 3, 435--472.

 \bibitem[FK]{FK}J.\ Frohlich; T.\ Kerler,
 Quantum groups, quantum categories and quantum field theory. Lecture Notes in Mathematics, 1542. Springer-Verlag, Berlin, 1993.

 \bibitem[FNTW]{FNTW}E.\ Fradkin; C.\ Nayak; A.\ Tsvelik; F.\
 Wilczek, \emph{A Chern-Simons Effective Field Theory for the Pfaffian Quantum Hall
 State,} Nucl.Phys. B516 (1998) 704-718, arXiv:cond-mat/9711087.

 \bibitem[FTL]{FTL}A.\ Feiguin; S.\ Trebst; A.\ W. W. Ludwig; M.\ Troyer; A.\ Kitaev; Z.\ Wang, M.\
 Freedman, \emph{Interacting anyons in topological quantum liquids: The golden
 chain}, Phys. Rev. Lett. 98, 160409 (2007).

 \bibitem[G]{Gannon}T.\ Gannon, \emph{Modular data: the algebraic combinatorics of conformal field theory.}
  J. Algebraic Combin. 22 (2005), no. 2, 211--250.

 \bibitem[GK]{GK}D.\ Gepner; A.\ Kapustin, \emph{On the classification of fusion rings.}
  Phys. Lett. B 349 (1995), no. 1-2, 71--75.



\bibitem[HH]{HH}T.\ Hagge; S.\ Hong, \emph{Some non-braided fusion categories of rank
3}, arXiv: 0704.0208

\bibitem[HRW]{HRW} S.-M.\ Hong; E.\ C.\ Rowell; Z.\ Wang, \emph{On exotic modular tensor categories}, Commun. Contemp. Math. Vol. 10 (2008), Suppl.1, 1049--1074, arxiv: 0710.5761.






\bibitem[Ki1]{Kitaev1}A.\ Kitaev, \emph{Fault-tolerant quantum computation by anyons}.
Ann. Physics 303 (2003), no. 1, 2--30.

\bibitem[Ki2]{Kitaev2}A.\ Kitaev, \emph{Anyons in an exactly solved model and beyond.}
 Ann. Physics 321 (2006), no. 1, 2--111.


\bibitem[KL]{KL}L.\  Kauffman; S.\  Lins, Temperley-Lieb
recoupling theory and invariants of $3$-manifolds.
 Annals of Mathematics Studies, 134. Princeton University Press, Princeton, NJ, 1994. x+296 pp.

\bibitem[KW]{KW}D.\ Kazhdan; H.\ Wenzl,
\emph{Reconstructing monoidal categories.} I. M. Gelfand Seminar,
111--136, Adv. Soviet Math., 16, Part 2, Amer. Math. Soc.,
Providence, RI, 1993.

\bibitem[LRW]{LRW}M.\ J.\ Larsen; E.\ C.\ Rowell; Z.\ Wang, \textit{The $N$-eigenvalue problem and two applications,}
Int. Math. Res. Not. (2005), no. 64, 3987--4018.

\bibitem[LWa]{LWa}M.\ Larsen; Z.\ Wang, \emph{Density of the SO(3) TQFT
representation of mapping class groups.} Comm. Math. Phys. 260
(2005), no. 3, 641--658.

\bibitem[LWe]{LW}M.\ Levin; X.-G.\ Wen, \emph{String-net condensation: A physical mechanism for topological
phases,} Phys. Rev. B71, 045110 (2005). cond-mat/0404617.

\bibitem[Ma]{Ma} M.\ Manoliu, \emph{Abelian Chern-Simons theory. I. A topological quantum field theory.}
 J. Math. Phys. 39 (1998), no. 1, 170--206.


\bibitem[M1]{MugerII} M.\ M\"uger, \emph{From subfactor to categories and topology, II}
J. Pure Appl. Algebra 180 (2003), no. 1-2, 159--219.

\bibitem[M2]{MugerLMS} M.\ M\"uger, \emph{On the structure of modular categories,}
Proc. London Math. Soc. (3) 87 (2003), no. 2, 291--308.


\bibitem[MR]{MR}G.\ Moore; N.\ Read, \emph{Nonabelions in the fractional quantum hall
effect,} Nuclear Physics B, Volume 360, Issue 2-3, p. 362-396.

\bibitem[MS1]{MS1} G.\ Moore; N.\ Seiberg, \emph{Lectures on RCFT.}
Superstrings '89 (Trieste, 1989), 1--129, World Sci. Publ., River
Edge, NJ, 1990.

\bibitem[MS2]{MS2} G.\ Moore; N.\ Seiberg, \emph{Classical and quantum conformal field theory.}
 Comm. Math. Phys. 123 (1989), no. 2, 177--254.



\bibitem[NR]{propF} D.\ Naidu; E.\ C.\ Rowell, \emph{A finiteness property
for braided fusion categories}, arXiv:0903.4157.

\bibitem[O1]{Ostrik1} V.\ Ostrik, \emph{Fusion categories of rank 2}. Math. Res. Lett. 10 (2003), no. 2-3, 177--183.

\bibitem[O2]{Ostrik2} V.\ Ostrik, \emph{Pre-modular categories of rank
3}, Mosc. Math. J. 8 (2008), no.1, 111---118, arXiv:math/0503564.

 \bibitem[P]{Preskill}J.\ Preskill, Chapter 9 at
 http://www.theory.caltech.edu/~preskill/ph229/.



\bibitem[Ro1]{rowell06} E.\ C.\ Rowell \emph{From quantum groups to unitary modular tensor categories}
in Contemp.\ Math.\ {413} (2006), 215--230.











\bibitem[Tu]{Turaev}V.\ Turaev, Quantum Invariants of Knots and 3-Manifolds, De Gruyter Studies in
Mathematics, Walter de Gruyter (July 1994).

\bibitem[TV]{TV}V.\ Turaev; O.\ Viro, \emph{State sum invariants of $3$-manifolds and quantum $6j$-symbols}.
Topology 31 (1992), no. 4, 865--902.


\bibitem[TW]{TuWe} I.\ Tuba; H.\ Wenzl, \emph{On braided tensor categories of type $BCD$} J. reine angew. Math. 581 (2005), 31--69.




\bibitem[Wa]{Wang}Z.\ Wang, Topologization of electron liquids with Chern-Simons theory and quantum computation.
Differential geometry and physics, 106--120, Nankai Tracts Math.,
10, World Sci. Publ., Hackensack, NJ, 2006.

\bibitem[Wal]{Walker}K. \ Walker, On Witten's 3-manifold
Invariants, 1991 notes at http://canyon23.net/math/.

\bibitem[Wen]{Wen}X.-G.\ Wen, \emph{Topological Orders and Edge Excitations in FQH
States}, Advances in Physics 44, 405 (1995). cond-mat/9506066.

\bibitem[WW1]{WW1}X.-G. \ Wen; Z. \ Wang, \emph{A classification of symmetric polynomials of infinite variables-
a construction of Abelian and non- Abelian quantum Hall states},
Phys. Rev. B, 77, 235108 (2008); cond-mat/0801.329

\bibitem[WW2]{WW2}X.-G. \ Wen; Z. \ Wang, \emph{Topological properties of Abelian and non-Abelian quantum Hall states from the pattern of zeros},
Phys. Rev. B, 78, 155109 (2008), arXiv:0803.1016


\bibitem[Wenz]{wenzlcstar}H.\ Wenzl, \emph{$C\sp *$ tensor categories from quantum groups.}
 J. Amer. Math. Soc. 11 (1998), no. 2, 261--282.

\bibitem[Wil]{Wilczek}F.\ Wilczek, Fractional Statistics and Anyon
Superconductivity, World Scientific Pub. Co. Inc. (December 1990).



\bibitem[Witt]{Witten} E.\ Witten, \emph{The search for higher symmetry in string
theory.} Physics and mathematics of strings. Philos. Trans. Roy. Soc. London Ser. A 329 (1989), no. 1605, 349--357.



\end{thebibliography}
\end{document}